\documentclass[12pt]{amsart}
\usepackage{amssymb,latexsym,amsmath,amsthm,amscd}
\usepackage{setspace}
\usepackage[all]{xy} \xyoption{arc}
\usepackage[left=3cm,top=2cm,right=3cm,bottom = 2cm]{geometry}
\usepackage{graphicx}
\usepackage[usenames,dvipsnames]{xcolor}
\usepackage{mathtools}
\usepackage{charter}

\theoremstyle{plain}
\newtheorem{thm}{Theorem}[section]
\newtheorem{lem}[thm]{Lemma}
\newtheorem{prop}[thm]{Proposition}
\newtheorem{cor}[thm]{Corollary}

\theoremstyle{definition}
\newtheorem{dfn}[thm]{Definition}

\theoremstyle{remark}
\newtheorem{rmk}[thm]{Remark}



\newcommand{\cG}{\mathcal{G}}
\newcommand{\cH}{\mathcal{H}}



\newcommand{\veps}{\varepsilon}



\DeclareMathOperator{\Tr}{Tr}

\DeclareMathOperator{\Hom}{Hom}

\DeclareMathOperator{\im}{im}

\DeclareMathOperator{\Res}{Res}

\newcommand*{\df}{\mathrel{\vcenter{\baselineskip0.5ex \lineskiplimit0pt
                     \hbox{\scriptsize.}\hbox{\scriptsize.}}} =}

\DeclarePairedDelimiter\floor{\lfloor}{\rfloor}


\providecommand{\abs}[1]{\left\lvert#1\right\rvert}

\providecommand{\pseries}[2]{#1[\![ #2 ]\!]}


\newcommand{\QQ}{\mathbf{Q}}
\newcommand{\FF}{\mathbf{F}}

\newcommand{\ZZ}{\mathbf{Z}}

\newcommand{\RR}{\mathbf{R}}

\DeclareMathOperator{\alg}{alg}

\newcommand{\bone}{\mathbf{1}}
\DeclareRobustCommand{\stirling}{\genfrac\{\}{0pt}{}}

\pagenumbering{arabic}
\pagestyle{headings}
\setcounter{secnumdepth}{4}
\setcounter{tocdepth}{1}
\setlength{\parindent}{1cm}

\begin{document}
\title{On $p$-adic modularity in the $p$-adic Heisenberg algebra}
\author{Cameron Franc and Geoffrey Mason}
\date{}

\begin{abstract}
  We establish existence theorems for the image of the normalized character map of the $p$-adic Heisenberg algebra $S$ taking values in the algebra of Serre $p$-adic modular forms $M_p$. In particular, we describe the construction of an analytic family of states in $S$ whose character values are the well-known $\Lambda$-adic family of $p$-adic Eisenstein series of level one built from classical Eisenstein series. This extends previous work treating a specialization at weight $2$, and illustrates that the image of the character map contains nonzero $p$-adic modular forms of every $p$-adic weight. In a different direction, we prove that for $p=2$ the image of the rescaled character map contains every overconvergent $2$-adic modular form of weight zero and tame level one; in particular, it contains the polynomial algebra $\QQ_2[j^{-1}]$. For general primes $p$, we study the square-bracket formalism for $S$ and develop the idea that although  states in $S$ do not generally have a conformal weight, they can acquire a $p$-adic weight in the sense of Serre. 
\end{abstract}
\maketitle
\tableofcontents

\section{Introduction}
The occurrence of \emph{modularity} in the theory of vertex operator algebras (VOAs) is by now a well-known, even commonplace, phenomenon.\ `Modularity' usually refers to elliptic modular forms although many other types of modular and automorphic objects intervene in the theory.\ One runs across elliptic functions, Siegel modular forms, quasimodular forms, Jacobi  forms and mock modular forms for example, not to mention vector-valued versions of these.\ Absent from this list, however, are $p$-adic modular forms and their variants.

The last several decades have witnessed an explosion of work in number theory using $p$-adic methods to solve outstanding classical problems, as well as introducing new $p$-adic versions of old results and conjectures.\  Of relevance to the present paper is the theory of $p$-adic modular forms as introduced by Serre and Katz \cite{Katz,Serre2}.\ Among other applications, these new modular forms are frequently used to define $p$-adic variants of $L$-functions via the theory of $p$-adic interpolation. See for example \cite{Hida} for an overview of this theory, and \cite{Vonk} for a concrete discussion of the theory of overconvergent modular forms. Some of the essential facts are also reviewed in Section \ref{SN} below.

Given the close connection between VOAs and modular forms, it is natural to hypothesize a $p$-adic theory of VOAs that would extend this connection to $p$-adic modular forms. In \cite{FMpadic} we introduced such a theory  by adopting a set of axioms that naturally arises by completing the usual axioms of a standard VOA (or, as we shall call them, \emph{algebraic} VOAs) with respect to some $p$-adic norm.\ This is very different from simply tensoring with the field $\QQ_p$ of $p$-adic numbers, a situation that is mirrored in Serre's theory \cite{Serre}.\ Indeed, Serre notes that merely tensoring with $\QQ_p$ essentially produces nothing new since the space of modular forms of level one has a basis defined over the integers.\ Similarly, although the resulting completed $p$-adic VOAs of \cite{FMpadic} have many properties akin to those of an algebraic VOA, we emphasize that a $p$-adic VOA is \emph{not} a VOA in the usual sense:\ for example, one axiom of algebraic VOAs (the truncation property of vertex operators) is that certain generating series (fields applied to states) are finite-tailed Laurent series.\ By contrast, in our $p$-adic theory these series can have essential singularities, although the coefficients of the polar terms tend to zero in the underlying $p$-adic topology. While novel in the algebraic theory of VOAs, such series are commonplace in $p$-adic analysis and geometry.

Heisenberg algebras are central objects in both mathematics and physics, providing one of the most basic and amenable examples of relevance. Use of the word `algebra' in this context is a shorthand for either `Heisenberg Lie algebra' or `Heisenberg vertex operator algebra'. It is convenient to  denote the algebraic Heisenberg VOA of rank $1$ over $\QQ_p$ by $S_{\alg}$.\ This is to distinguish it from its $p$-adic completion $S$, called the $p$-adic Heisenberg algebra that was constructed in \cite{FMpadic} and shown to be a $p$-adic VOA.\ They both occur in the following commuting diagram that we will shortly explain and which goes to the heart of the main results of the present paper.
\begin{eqnarray}\label{Sdiag}
\begin{xymatrix}
{
& S\ar[rrr]^{\hat{f}}&&& M_p   \\
& S_{alg}\ar[rrr]_{f}\ar[u]&&& \QQ_p[E_2, E_4, E_6]\ar[u]
 }
\end{xymatrix}
\end{eqnarray}
The left vertical map is the natural containment of  $S_{\alg}$ in its $p$-adic completion $S$.\ $M_p$ is the space of (Serre) $p$-adic modular forms of tame level $1$. It is the $p$-adic completion of $\QQ_p[E_4, E_6]$, however, $E_2$ also lies in $M_p$ and the right vertical map  is the natural containment.\ $\QQ_p[E_2, E_4, E_6]$ is the ring of quasimodular forms of level $1$.\

As for the horizontal maps, $f$ is a renormalized version of the usual character map ($1$-point function)
of algebraic VOAs that associates a formal $q$-series (a certain graded trace) to a state in the VOA.\ This is an important, though technical, aspect of the general theory and we say more about it in Subsection \ref{SScharmap}.\ It is known \cite{DMNQuasi, MT2, MT}, that $f$ maps into the ring of quasimodular forms and in fact the last two references prove that $f$ is a \emph{surjection}.\ Furthermore it is proved in \cite{FMpadic} that $f$ is $p$-adically continuous, so that it extends to the completions and this is the upper map
$\hat{f}$ which is a continuous linear map of $p$-adic Banach spaces.

This diagram also helps explain our emphasis in the present paper on the Heisenberg algebra.\ In principle we would like to consider similar diagrams in which  $S_{\alg}$ is replaced with other algebraic VOAs  and $S$ with their $p$-adic completions.\ However, we presently do not generally have a good understanding of either the vertex operator structure of the completion or of the image of $f$ and this greatly complicates any study of $\hat{f}$, for example a description of its image.

As it is, since Diagram \eqref{Sdiag} commutes and $f$ is surjective (see below) then $\im\hat{f}$ contains $\QQ_p[E_2, E_4, E_6]$  and we can ask for a description of the precise image of this map.\ The most natural expectation is that $\hat{f}$ \emph{surjects onto $M_p$}.\ This conclusion --- if true --- remains open, and the main purpose of the present paper is to develop techniques and some explicit results that contribute towards its affirmative resolution.\

We already proved in \cite{FMpadic} that the image of $\hat{f}$ is \emph{strictly larger} than the space of quasimodular forms, i.e., the image  contains new $p$-adic modular forms, and additional examples of a similar nature are developed in \cite{BarakeFranc}.\ The main arithmetic results of the present paper are encapsulated in

\begin{thm}\label{t:main1}
The following hold:
\begin{enumerate}
    \item For every prime $p$ and for every even weight $k$ in $p$-adic weight space $X$, $\im\hat{f}$ contains
    the $p$-adic Eisenstein series $G_k^*$ of weight $k$. In fact, there exists a $\Lambda$-adic family of continuously varying states in the $p$-adic Heisenberg algebra that lifts this family of Eisenstein series.
    \item When $p=2$, $\im\hat{f}$ contains the space of $2$-adic, weight $0$, overconvergent modular forms $M_0^{\dagger}(7/4)$.\ In particular, 
    $\im\hat{f}$ contains $\QQ_2(j^{-1})$.
\end{enumerate}
\end{thm}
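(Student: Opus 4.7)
I would interpolate across weight space a classically-known family of Heisenberg states. For each even integer $k \geq 4$ there is a state $v_k \in S_{\alg}$, built as a polynomial in the Heisenberg modes $h(-n)\mathbf{1}$, whose character $f(v_k)$ equals the classical Eisenstein series $G_k$ up to a normalizing scalar; such $v_k$ come from Zhu-type combinatorial formulas for Heisenberg one-point functions (as in Dong--Mason--Nagatomo). To pass from $G_k$ to its $p$-depleted form $G_k^*$, I would realize the operator $1-p^{k-1}V_p$ at the state level via $p$-power rescalings of the Heisenberg generators, giving a corrected state $v_k^* \in S$ with $\hat f(v_k^*) = G_k^*$.

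Next I vary $k$. The coefficients of $v_k^*$ in the natural partition-indexed basis of $S$ are, after regrouping, expressions in Bernoulli-type numbers satisfying Kummer congruences in $k$, and so extend continuously to $\kappa \in X$. This yields a continuous map $\kappa \mapsto v_\kappa^* \in S$, and continuity of $\hat f$ (a key input from \cite{FMpadic}) then forces $\hat f(v_\kappa^*) = G_\kappa^*$ on the dense subset of classical weights, hence everywhere in $X$. The main obstacle here is organizing the classical formulas so that Kummer continuity is manifest and that the formal state $v_\kappa^*$ genuinely converges in the Banach norm of $S$; this is essentially an analytic estimate on the growth of the partition-basis coefficients that will require careful bookkeeping.

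\textbf{Plan for Part (2).} I would combine Part (1) with a structural description of the target. For $p=2$ and tame level $1$, weight zero overconvergent forms of radius $7/4$ can be presented as a Banach completion of the polynomial algebra $\QQ_2[j^{-1}]$ with respect to a norm recording this radius, by the standard analysis of the supersingular and ordinary loci on $X_0(1)_{\QQ_2}$. It therefore suffices to realize each such overconvergent series $\sum_n a_n j^{-n}$ as the character of a convergent state in $S$.

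The starting point is that $\Delta = (E_4^3 - E_6^2)/1728 \in \QQ_2[E_2,E_4,E_6] \subseteq \im\hat f$ by surjectivity of $f$, so $j^{-1}=\Delta/E_4^3$ becomes accessible once one can produce states whose characters are $E_4^{-n}$. I would build these, and more generally preimages of $j^{-n}$, by a weight-space interpolation analogous to Part (1), now viewing $\Delta^n/E_4^{3n}$ as a specialization of a suitable $\Lambda$-adic family of states that differ from those of Part (1) only in their targeted character. A general overconvergent sum is then realized as the corresponding $p$-adic sum of these states in $S$. The principal obstacle in Part (2) is matching norms: the state constructed for $j^{-n}$ must have Banach norm in $S$ bounded (up to a uniform constant) by $2^{-7n/4}$, so that the infinite sum actually converges in $S$ and maps to the desired overconvergent form under $\hat f$. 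This is where the specific numerical value $7/4$ must enter, and obtaining these bounds will be the hardest analytic step.
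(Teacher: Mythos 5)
Your overall strategy coincides with the paper's: for (1), interpolate a family of Eisenstein states over weight space using Bernoulli/Kummer-type congruences and conclude by continuity of $\hat f$; for (2), realize each $j^{-n}$ as the character of a limit state and sum. But two steps in your plan are genuinely problematic.

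First, in (1), the step ``realize the operator $1-p^{k-1}V_p$ at the state level via $p$-power rescalings of the Heisenberg generators'' does not work as stated: $G_k(p\tau)$ is a form on $\Gamma_0(p)$, not a level-one quasimodular form, so it is not in the image of $f$ on $S_{\alg}$, and no finite algebraic manipulation of the level-one Heisenberg generators produces a state whose character is $p^{k-1}G_k(p\tau)$. The paper never constructs a stabilized state $v_k^*$ at integer weight. Instead it expands the classical state $v_r$ (with $f(v_r)=G_{r+1}$) in round-bracket modes with coefficients $c(r,m)=\sum_{j=0}^m(-1)^{m+j}\binom mj(j+1)^{r-1}$, deletes the terms with $p\mid (j+1)$ to obtain interpolable coefficients $c_p(r,m)\in m!\,\ZZ_p$, and then sends $r_n\to r$ in $X$ with $r_n\to\infty$ archimedean; Serre's theory gives $G_{r_n+1}\to G_{r+1}^*$, so the $p$-stabilization emerges automatically from the limit rather than being imposed on a state. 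The estimate $c_p(r,m)\in m!\,\ZZ_p$ is also what places the limit state in $S_r$ for $p^{1/p}\le r<p^{1/(p-1)}$, which is needed to make the $X$-weight and $L[0]$ statements meaningful.

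Second, in (2) your norm condition is inverted. What is needed is a \emph{growth} bound $\abs{J_n}\le 2^{21n}=2^{12n\cdot(7/4)}$ on states with $\hat f(J_n)=j^{-n}$, which pairs with the decay $\abs{a_n}\,2^{12n\cdot(7/4)}\to 0$ defining $M_0^\dagger(7/4)$ so that $\sum_n a_nJ_n$ converges in $S$. A decay bound of $2^{-7n/4}$ on the states would yield surjectivity onto all of $M_{2,0}$, which is precisely what cannot (yet) be proved; the exponent $7/4$ is the output of the estimate one can establish, not a target one aims for. Moreover, your plan defers the entire analytic core of (2): one must prove that the states $J_{n,m}$ with character $\Delta^nQ^{3n(2^m-1)}$ (using $Q\equiv 1\pmod 2$) converge as $m\to\infty$ and then bound the limit. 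In the paper this occupies Sections 5--6 and rests on rewriting $h[-2]$, $h[-3]$ in round-bracket modes via generalized Bernoulli polynomials, isolating the creation parts $A$ and $C$, computing $A^{2k}\mathbf{1}$ in closed form, and evaluating $2$-adic limits of double factorials and binomial coefficients. Some substitute for this computation is required before your outline of (2) becomes a proof.
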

\noindent
For the benefit of readers uninured to the language of $p$-adic and $\Lambda$-adic modular forms, the relevant terms are explained in Section \ref{SN} and \eqref{M0dagger}.

The reason why we do not obtain surjectivity in weight zero when $p=2$ is that the states in $S$ that we use as preimages of the powers $j^{-n}$  do not have sufficiently nice asymptotic behaviour as $n$ varies.\ See Corollary \ref{c:main2adic} and the surrounding discussion for more details on this point.\ The reason why we restrict to the prime $p=2$ is to simplify computations, as in this case the space of $2$-adic modular forms of tame level one has a particularly simple description as the Tate algebra $\QQ_2\langle j^{-1}\rangle$ of series in $j^{-1}$ with coefficients that tend to zero $2$-adically \cite{Vonk}.\ See the start of Section \ref{s:weightzero} for a discussion of this point.\ A very similar description of weight zero forms holds when $p=3$ and $p=5$ so that in principle one might be able to extend our computations to these primes. However, it seems that new ideas would be required for a significantly more general result, and so for this reason we have contented ourselves here with the restrictions and results stated in Theorem \ref{t:main1}.

We now discuss some additional aspects of the proof of the Theorem.\ This may be done with the aid of a diagram similar to (\ref{Sdiag}), namely
\begin{eqnarray}\label{ugdiag}
\begin{xymatrix}
{
& u\ar[rrr]^{\hat{f}}&&& g   \\
& (u_i)\ar[rrr]_{f}\ar[u]&&& (g_i)\ar[u]
 }
\end{xymatrix}
\end{eqnarray}

Here, $g$ is a designated $p$-adic modular form that we want to show lies in $\im\hat{f}$; $g$ is the $p$-adic limit of a Cauchy sequence of classical modular forms $g_i$, say of weight $k_i$.\ So the right vertical arrow is just notation.\ We want to pull this back to the Heisenberg VOA.\ Because $f$ is a \emph{graded} surjection (again, see below) there are states $u_i\in(S_{\alg})_{[k_i]}$ mapping onto $g_i$.\ Because $f$ has a very large kernel there is generally not a unique choice for $u_i$ and in any case the resulting sequence of states $(u_i)$ may not be a Cauchy sequence in $S_{\alg}$.\ As long as it is, however, and if its limit $u$ is contained in $S$, then the $p$-adic continuity of $\hat{f}$ ensures that $f(u)=g$ as required.\ The $p$-adic modular forms intervening in the statement of Theorem \ref{t:main1} are emblematic of cases for which we can make this procedure work.

We turn to a discussion of weights in $S$ and $S_{\alg}$ and first note that when the situation of \eqref{ugdiag} prevails, Serre \cite{Serre2} tells us that $(k_i)$ is a Cauchy sequence
having a limit, say $k$, in $p$-adic weight space $X$.\ This is the $p$-adic weight of $g$ and it is natural to say that $u$ also has a weight $k$ in order to make $\hat{f}$ a weight-preserving map.\ Actually, we say that $u$ has \emph{$X$-weight} $k$ so as to distinguish it from the other types of weights occurring in a VOA (cf.\ the next paragraph).\ This notion of $X$-weight in $S$ generalizes the usual square bracket weight of a state in $S_{\alg}$.\  The latter type of weight belongs to $\ZZ$, which is embedded in $X$ as a dense subspace.\ So we see that not only Fock space itself, but also the modular forms and the $X$-weights of states all arise from the process of  $p$-adic completion.

Weights of states in a VOA such as $S_{\alg}$ are usually described as eigenvalues of some specific operators,
and they are integers.\ The notion of the $X$-weight of a limit state in $S$ introduced above is quite different.\ $X$-weights lie in the $1$-dimensional $p$-adic Lie group $X$ and they are not eigenvalues of anything.\ In order to reconcile these various notions of weights we recall some details about weights in an algebraic VOA.

VOA theorists will be familiar with the fact 
that in $S_{\alg}$, indeed in any VOA $V$,  there are two special semisimple operators $L(0)$ and $L[0]$
with point spectra in $\ZZ$ such that 
\[V=\oplus_n V_n=\oplus_n V_{[n]}\]
and $V_n$, $V_{[n]}$ are the eigenspaces for $L(0)$ and $L[0]$, respectively, with eigenvalue $n$. Further details are given in Section \ref{Ssqbrack}. The grading on $V$ conferred by $L(0)$ is usually called the \emph{conformal grading}, and if $v\in V_n$ we say that $v$ has (conformal) weight $n$.\ The second grading has, for some reason, not (yet) acquired an official name.\ We will call it the \emph{square bracket grading} and say that $v\in V_{[n]}$ has \emph{square bracket weight $n$}.\ The importance of the square bracket grading arises from the fact that, with respect to it,  $f$ in \eqref{Sdiag} becomes a \emph{graded map}, a fact that we already alluded to above.\ Thus
if $u\in (S_{\alg})_{[k]}$ then $f(u)$ is a quasimodular form of weight $k$. 

Unlike their behaviour in $S_{\alg}$, and as far as the $p$-adic VOA $S$ is concerned, the spectral properties of $L(0)$ and $L[0]$ diverge significantly.\ $S$ does \emph{not} carry a conformal grading extending that on $S_{alg}$.\ On the contrary it is shown in \cite[Proposition 7.3]{FMpadic} that all $L(0)$-eigenstates in $S$ are already contained in $S_{\alg}$.\ On the other hand, $L[0]$ is described by an infinite sum \eqref{0mode} which does not converge on $S$. To circumvent this apparent difficulty we utilize a family of $p$-adic Banach spaces introduced in \cite{FMpadic} and denoted by $S_r$ for $r\in\RR$.\ They are defined by imposing growth conditions on power series coefficients, similar to the types of growth conditions that arise in the work of Katz \cite{Katz} and the theory of overconvergent modular forms which dates back to work of Dwork, Coleman and many others --- see \cite{Vonk} for an accessible survey of some of this theory.\ Moreover, these spaces $S_r$ are nested
\[S_{\alg}\subseteq S_{r_1}\subseteq S_{r_2}\subseteq S=S_1\] whenever $r_1\geq r_2\geq 1$.\ The spaces $S_r$ have a norm $\abs{\cdot}_r$ that is \emph{stronger} than the supremum norm $\abs{\cdot}_1$ giving rise to $S$ itself.\ We show that $L[0]$ operates continuously on $S_r$ for $r\geq p^{1/p}$ (Theorem \ref{thmL}) and we can study the point spectrum for this action.\ It turns out that the $p$-adic states
$u$ implicitly intervening in  Theorem \ref{t:main1} \emph{do} lie in an $S_r$ for large enough $r$ and there they not only have an $X$-weight but are also eigenstates for $L[0]$.\ We show in Theorem \ref{thmL0eigen} that \emph{every} $p$-adic integer occurs as an eigenvalue of $L[0]$.\ 
One expects that, in stark contrast to its action on $S_{\alg}$,  $L[0]$ typically has infinite-dimensional weight spaces and we show in Corollary \ref{corzero} that, at least when $p=2$, the $0$-weight space is indeed infinite-dimensional.

We leave open the problem of establishing a more direct connection between the submodules $S_r$ and the theory of overconvergent modular forms.\ Such a connection could follow from a reformulation of the $p$-adic axioms introduced in \cite{FMpadic} using instead analogs of the so-called genus zero axioms for algebraic VOAs, as discussed in \cite{FHL} and emphasized in \cite{FBZ}.\ In the $p$-adic theory, the algebro-geometric foundations would naturally be replaced by the theory of rigid analytic geometry \cite{BGR}.\ Likewise, Serre's description of $p$-adic modular forms would need to be replaced by Katz's perspective of $p$-adic modular forms as sections of bundles living over the ordinary parts of classical modular curves.\ Such an intrinsic and geometric description of the foundations of $p$-adic VOAs could lead to the introduction of less explicit, and therefore potentially more general, techniques for studying $p$-adic characters of $p$-adic VOAs.

The paper is organized as follows:\ in Section \ref{SN} we explain some notations that we use throughout and provide additional background on a range of topics.\ These include $p$-adic weight space; special numbers such as Stirling and Bernoulli-type numbers, which are  ubiquitous thanks to the nature of Zhu's exponential change-of-variables formula;
modular forms, both classical and $p$-adic; $\Lambda$-rings; Heisenberg algebras.\ In Section \ref{Ssqbrack} we cover the square bracket formalism, explain $X$-weights, determine in Theorem \ref{thmL} when $L[0]$ is $p$-adically continuous,
and construct the Cauchy sequences $(u_i)$ of Diagram \eqref{ugdiag} that lead to the $p$-adic Eisenstein series in Theorem \ref{t:main1}(1).\ The proof of this part of the Theorem is completed in Section \ref{SLambda}.\ In the more elaborate Sections \ref{s:weightzero} and \ref{Sp=2I} we prove  
Theorem \ref{t:main1}(2) and show in Corollary \ref{corzero} that when $p=2$ the $L[0]$-eigenspace for eigenvalue $0$ is infinite-dimensional.\ In the final Section \ref{Shn} we treat the actions on $S_r$ of the square bracket modes $h[n]$ of the weight $1$ Heisenberg state $h$.\ This is not needed for the proof of Theorem \ref{t:main1} but is included here both because of the similarity to previous calculations and because we anticipate that it will be helpful in identifying new $p$-adic VOAs in the future.

\section{Background and notation}\label{SN}
Fix a prime $p$.\
For an integer $n$ with $n=p^km$ and 
$\gcd(m, p)=1$ we write $\nu(n)\df k$ and $\abs{n}\df p^{-k}$. These are the $p$-adic \emph{valuations} and \emph{absolute values}, respectively. We omit a subscript of $p$ on these notations in order to avoid a profusion of subscripts.
\subsection{Weight space}\label{SSXwt}
 Let $\ZZ_p$ and $\QQ_p$ denote the ring of $p$-adic integers and its quotient field of $p$-adic numbers, respectively.\ Following Serre \cite{Serre2}, we denote \emph{$p$-adic weight space} as 
 \begin{align}\label{wtsp}
& X = \varprojlim_m \ZZ/p^m(p-1)\ZZ \cong \ZZ_p\times \ZZ/(p-1)\ZZ,
\end{align} 
so that when $p=2$ we have simply $X = \ZZ_2$.\ The space $X$ is a  one-dimensional $p$-adic Lie group that contains $\ZZ$ as a dense subgroup embedded diagonally. Points $(x,y)$ and $(u,v)$ are close in weight space if $x \equiv u \pmod{p^N}$ for some $N\gg 0$, and $y\equiv v \pmod{p-1}$. In particular, integers $a$ and $b$ are close in weight space if $a \equiv b \pmod{\phi(p^{N+1})}$ for some $N\gg 0$, where $\phi$ denotes Euler's totient function satisfying $\phi(p^{N+1}) = (p-1)p^N$.

We will also regard elements of $X$ as $p$-adic characters of the unit group $\ZZ_p^\times$.\ This latter group occurs as the middle term of a (split) short exact sequence
\begin{align}\label{ses1}
 & 1\rightarrow 1+p\ZZ_p\rightarrow \ZZ_p^{\times}\rightarrow \ZZ/(p-1)\ZZ\rightarrow 1 .  
\end{align}
We set $V_p \df \Hom(\ZZ_p^\times,\ZZ_p^\times)$, the set of continuous endomorphisms of $\ZZ_p^\times$ equipped with the topology of uniform convergence.\ Then there is a natural continuous homomorphism
\[
  \veps \colon X \to V_p
\]
that extends the natural map on $\ZZ$ taking an integer $n$ to the endomorphism $v\mapsto v^n$ for $v\in\ZZ_p^{\times}$. More generally if $k \in X$, to describe how $\veps(k)$ acts on $v$ we write $k = (s,u)$ according to the direct product decomposition \eqref{wtsp} and decompose $v = v_1v_2$ where $v_1^{p-1} = 1$ and $v_2 \equiv 1 \pmod{p}$ (cf. \eqref{ses1}). Then:
\[
  v^k \df \veps(k)(v) =  v_1^kv_2^k = v_1^uv_2^s.
\]
The map $\veps$ is injective if $p=2$ and bijective if $p$ is odd.

The \emph{even} weights are the elements of $2X$, equivalently, those $k \in X$ with $(-1)^k=1$.\ For odd $p$ this is equivalent to the component $u \in \ZZ/(p-1)\ZZ$ of $k$ being even. When $p =2$ the even weights are the elements of $2\ZZ_2$.\ Naturally, an \emph{odd} weight is one that is not even, and one sees that $k$ is even if, and only if, $1+k$ is odd.

\subsection{Special numbers}\label{SSspecnos}
The following sequences will play an important r\^{o}le in our computations, just as they do in the theory of $p$-adic $L$-functions, cf.\ \cite{Washington}. \emph{Bernoulli numbers} are defined by the series
\[
\sum_{k\geq0} \frac{B_k}{k!}z^k\df \frac{z}{e^z-1},   
\]
and more generally \emph{generalized Bernoulli polynomials} are defined by
\[
    \sum_{k\geq 0} B_k^{(\ell)}(x)\frac{z^k}{k!}\df e^{zx}\left(\frac{z}{e^z-1}\right)^{\ell}.
\]
\emph{Stirling numbers of the first kind} $s(n, k)$ are coefficients of the falling factorial
\[
\sum_{k=0}^n s(n, k)z^k \df z(z-1)...(z-n+1).
\]
\emph{Stirling numbers of the second kind} $\stirling{n}{k}$ are defined for nonnegative $k$ by the series
\[
  \sum_{n\geq k} \stirling{n}{k}  \frac{z^n}{n!} \df \frac{1}{k!}(e^z-1)^k.
\]
\subsection{Elliptic modular forms}\label{SSEis}
We use several different normalizations of classical level $1$ Eisenstein series.\ For \emph{even} $k\geq 2$ we set
\[
 E_k \df -\frac{B_k}{k!}+\frac{2}{(k-1)!}\sum_{n\geq 1}\sigma_{k-1}(n)q^n
\]
while we convene that $E_k=0$ for odd $k$.\ We also introduce, for positive integers $r$, $s$, 
\begin{align}\label{wideE}
 \widehat{E}_{r+s} &\df (-1)^{r+1}r\binom{r+s-1}{s} E_{r+s}.
\end{align}
This is \emph{symmetric} in $r$ and $s$, however care is needed when manipulating these normalizations.\ Although $\widehat{E}_{r+s}$ is a scalar multiple of $E_{r+s}$ the scalar in question depends on $\{r, s\}$.\ These normalizations occur naturally in character formulas for VOAs, for example in \eqref{Zform}.
\[
  G_k \df \frac{(k-1)!}{2}E_k = -\frac{B_k}{2k}+\sum_{n\geq 1} \sigma_{k-1}(n)q^n.   
\]
We also use
\begin{align*}
   Q&\df 240G_4 = 1  + 240\sum_{n\geq1} \sigma_3(n)q^n, \\
   R&\df -504 G_6 =  1-504\sum_{n\geq 1} \sigma_5(n)q^n.
\end{align*}

The eta-function is
\[
  \eta(\tau)=\eta(q) = q^{1/24}\prod_{n=1}^{\infty} (1-q^n)  
\]
and the discriminant is
\[
    \Delta(\tau) = \eta(\tau)^{24} = \frac{1}{1728}(Q^3-R^2).
\]
Finally, the absolute modular invariant is
\[
    j(\tau) = \frac{Q^3}{\Delta}.
\]

\subsection{$p$-adic modular forms}
In \cite{Serre2} Serre defined the ring of $p$-adic modular forms of tame level one as the completion of the ring of modular forms $\QQ_p[Q,R]$ of level one with respect to the supremum norm taken on $p$-adic Fourier coefficients.\ This is a $p$-adic Banach algebra that we denote by $M_p$.\ Moreover
for any $k$ in weight space $M_{p, k}$ denotes the subspace of weight $k$ forms. The ring $M_p$ contains many new series, some of classical origin. For example, the $p$-adic Eisenstein series are defined for \emph{nonzero, even} $k\in X$ by
\[
 G_k^*(\tau) \df G_k(\tau) - p^{k-1}G_k(p\tau)      
\]
and we have $G_k^*\in M_{p, k}$. Notice that for even integers $k\geq 2$, each $G_k^*$ is a classical modular form on $\Gamma_0(p)$, whereas Serre showed that they are $p$-adic modular forms of tame level one. In fact, Serre showed more generally that every classical form on $\Gamma_0(p)$ with rational Fourier coefficients is a $p$-adic modular form of tame level one.

The Fourier expansion for this $p$-adic family of Eisenstein series can be reexpressed as 
\begin{equation}
\label{eq:Gk}
    G_k^* = \tfrac{1}{2}\zeta_p(1-k) + \sum_{n=1}^\infty \sigma_{k-1}^*(n) q^n,
\end{equation}
where $\zeta_p$ is the Kubota-Leopoldt $p$-adic zeta function \cite{Serre2,Washington} and
\[
\sigma_{k-1}^*(n)  = \sum_{\substack{d\mid n \\ \gcd(d,p) =1}} d^{k-1}.
\] 
The $p$-adic zeta function is an analytic function on the set of \emph{odd} elements of weight space (where it vanishes) and for \emph{even integers} $k\geq 2$ it satisfies
\begin{align*}
    \zeta_p(1-k) = -(1-p^{k-1})\frac{B_k}{k}.
\end{align*}
Note that $\zeta_p(s)$ has a pole of order $1$ at $s=1$, just as the complex zeta function does. Formula \eqref{eq:Gk}  shows that the Fourier coefficients of the series $G_k^*$ vary analytically over weight-space as a function of $k$. This idea was formalized by Wiles \cite{Wiles} as the standard example of a $\Lambda$-adic family of Eisenstein series; see also the book \cite{Hida} of Hida for an accessible introduction to this subject.

Katz \cite{Katz} described geometric foundations for the theory of $p$-adic modular forms that generalizes Serre's work. In this optic, $p$-adic modular forms are defined as sections of the vector-bundles underlying classical modular forms, but restricted to lie over certain rigid analytic subsets of modular curves deprived of discs around supersingular elliptic curves. Serre's theory arises, in essence, by removing discs of $p$-adic radius one around each supersingular elliptic curve (which are finite in number). By shrinking these discs, one focuses attention on smaller spaces of $p$-adic modular forms that may be better behaved from an arithmetic perspective, thanks to their improved radius of convergence. And indeeed, these spaces have a limit, called the space of \emph{overconvergent modular forms} (cf. section 3.5 of \cite{Vonk}). The overconvergent space possesses the useful property that the classical Hecke operator $U_p$ has a discrete spectrum when acting on the space of $p$-adic overconvergent modular forms, whereas this discreteness fails more generally for the full ring of $p$-adic modular forms of tame level one as defined by Serre.\ See \cite{Vonk} for a full and accessible discussion containing many concrete examples and references to the literature of the study of overconvergent modular forms and their applications in number theory.

\subsection{The $\Lambda$-ring}\label{ss:Lambda}
In this section we follow Chapter 7 of \cite{Washington} or, for slightly more generality, Chapter 3 of \cite{CoatesSujatha}. Let $\cG$ denote a profinite group, and let $\Lambda(\cG)$ denote the corresponding \emph{completed group algebra}, defined as the inverse limit
\[
\Lambda(\cG) \cong \varprojlim_{\cH \subseteq \cG} \ZZ_p[\cG/\cH],
\]
where the inverse limit is taken with respect to the set of open subgroups $\cH\subseteq \cG$, which are necessarily also closed and of finite index by compactness of $\cG$. There are natural transition maps and the corresponding inverse limit $\Lambda(\cG)$ is called the \emph{Iwasawa algebra} of $\cG$. This ring was first used by Iwasawa \cite{Iwasawa} in investigations of $p$-adic $L$-functions. In this application one takes $\cG = \ZZ_p$ or more generally $\cG = \ZZ_p^{\times}$, and constructs the $p$-adic zeta function by using compatibilities between Stickelberger elements in group rings 
\[
\ZZ_p[\ZZ_p/p^n\ZZ_p] \cong \ZZ_p[\ZZ/p^n\ZZ],
\]
cf. Chapter 6 of \cite{Washington}, to construct an element of $\Lambda(\ZZ_p)$ that \emph{is} the $p$-adic zeta function (or at least, one of its branches on weight space). For this reason the ring $\Lambda = \Lambda(\ZZ_p)$ is sometimes simply called the \emph{Iwasawa algebra}.\ Key to its r\^{o}le in defining $p$-adic $L$-functions as analytic functions is the fact that there is an isomorphism
\[
\Lambda \cong \pseries{\ZZ_p}{T}
\]
defined by the \emph{Mahler transform}, cf.\ Definition 3.3.2 and Theorem 3.3.3 of \cite{CoatesSujatha}.\ Thus, by the strong triangle inequality, elements of $\Lambda$ can be interpreted as analytic functions defined by power-series that converge for all $z \in p\ZZ_p$ (note that since the $p$-adic zeta function has a pole at $s=1$, it actually defines an element of the fraction field of $\Lambda$). 

In Section \ref{SLambda} below we define a family of states in the $p$-adic Heisenberg algebra with coefficients that are analytic functions on weight-space, and thus these coefficients can be interpreted as elements of $\Lambda(\ZZ_p^\times)$ by the Mahler transform, cf. Section 3.6 of \cite{CoatesSujatha}.\ The corresponding family maps onto the classical $\Lambda$-adic family of Eisenstein series discussed in \cite{Serre2}, and in this way we construct a VOA-theoretic avatar of this $\Lambda$-adic family of modular forms.

\subsection{Heisenberg algebras}\label{SS2.2}
Let $h(-m)$ denote independent indeterminates for all integers $m\geq 1$ and consider the infinite polynomial ring
\[
  S_{\alg}\df \QQ_p[h(-1),h(-2),\ldots].
\]
We endow $h(-m)$ with degree $m$, so that the degree $n$ graded piece of $S_{\alg}$ has dimension equal to the number of partitions of $n$.\ This ring $S_{\alg}$ can be equipped with the structure of a vertex operator algebra (over $\QQ_p$) called the \emph{rank-one Heisenberg algebra}, or \emph{Heisenberg VOA}.\ The subscript 'alg' is used to distinguish the classical VOA from its $p$-adic counterpart. 

We shall also frequently use a slightly different way to represent states in $S_{alg}$ in keeping with its origins as a highest weight module over the Heisenberg Lie algebra, and which is ubiquitous throughout VOA theory.\ Namely, $h$ is promoted from a mere cypher to a state of weight $1$ in $S_{alg}$, and its vertex operator is
\[
 Y(h, z) \df \sum_{n\in\ZZ} h(n)z^{-n-1},
\]
so that the modes $h(n)$ are operators on $S_{alg}$.\ They satisfy the canonical commutator relations of quantum mechanics, i.e., $[h(m), h(n)]= m\delta_{m+n, 0} Id$.\ The canonical vacuum state is
$\mathbf{1}$ and  $S_{alg}$ has a natural basis consisting of states
\begin{align}\label{statedef}
& h(-n_1)h(-n_2)...h(-n_t)\mathbf{1}& (n_1\geq n_2\geq...\geq n_t\geq 1).
\end{align}
The $p$-adic Heisenberg algebra (or VOA) $S$ is defined as a certain completion of $S_{alg}$.\ To describe this completion, for each real number $R\geq 1$ we introduce a norm on $S_{\alg}$ following \cite{FMpadic} as
\[
  \abs{\sum_{I} a_I h^I}_{R} \df \sup_I \abs{a_I}R^{\abs{I}},
\]
where $I$ runs over all finite multi-subsets of $\ZZ_{<0}$, and $\abs{I} = -\sum_{i \in I}i$. Let $S_R$ denote the corresponding completion of $S_{\alg}$, and write $S = S_1$. Then \cite[Proposition 9.1]{FMpadic} shows that
\[
  S_R = \left\{\sum_I a_Ih^I \in \pseries{\QQ_p}{h(-1),h(-2),\ldots} \mid \lim_{\abs{I} \to \infty} \abs{a_I}R^{\abs{I}} = 0\right\}.
\]
The space $S$ is the $p$-adic Heisenberg algebra, and if $R_1 < R_2$ then $S_{R_2} \subseteq S_{R_1}$. In particular, for all $R_1\geq R_2\geq 1$ we have \cite[Corollary 9.2]{FMpadic} containments
\[S_{\alg} \subseteq S_{R_1} \subseteq S_{R_2}\subseteq S.\]
The spaces $S_R$ will be significant for discussing certain aspects of the $p$-adic extension of the square-bracket formalism of $S_{\alg}$ discussed in Section \ref{Ssqbrack}.\  In particular, we will require the following simple lemma. Actually, we will at times require slight strengthenings of this Lemma, but we thought it useful to include a basic example first.
\begin{lem}
  \label{l:Sr1}
Let $v=\sum_{I}a_Ih^I \in S$, and suppose that for $\abs{I} \gg 0$ we have $a_I/(\abs{I}!) \in \ZZ_p$. Then $v\in S_R$ for all $R$ in the range $1 \leq R < p^{1/(p-1)}$.
\end{lem}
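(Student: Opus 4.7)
The plan is to verify the convergence criterion $\lim_{|I|\to\infty} |a_I| R^{|I|} = 0$ that characterizes membership in $S_R$, as recalled just before the statement of the lemma. Since for each $N$ only finitely many multi-subsets $I$ have $|I| < N$, those finitely many coefficients do not affect the limit, and I may focus entirely on the tail where the divisibility hypothesis applies.

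I would first translate the hypothesis into a norm bound. The condition $a_I/|I|! \in \ZZ_p$ is equivalent to $|a_I| \leq ||I|!|$. By Legendre's formula $\nu(n!) = (n - s_p(n))/(p-1)$, where $s_p(n)$ denotes the base-$p$ digit sum of $n$, this says $|a_I| \leq p^{-(|I| - s_p(|I|))/(p-1)}$. I would also record the crude upper bound $s_p(n) \leq (p-1)(\lfloor \log_p n \rfloor + 1)$, so $s_p(n)$ grows at most logarithmically in $n$.

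With $n = |I|$, combining these inputs yields
\[
|a_I|\, R^{|I|} \;\leq\; \bigl(R \cdot p^{-1/(p-1)}\bigr)^n \cdot p^{s_p(n)/(p-1)}.
\]
When $R < p^{1/(p-1)}$, the first factor decays exponentially in $n$, while the second grows only polynomially; hence the product tends to $0$ as $n\to\infty$, so $v\in S_R$.

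The argument is essentially an exercise in $p$-adic estimation, and I do not anticipate a serious obstacle. The real content of the lemma lies in the observation that $p^{1/(p-1)}$ is exactly the reciprocal of $\lim_n |n!|^{1/n}$, so the hypothesis of $|I|!$-divisibility is calibrated to give precisely this radius of convergence and no larger; any attempt to push the radius beyond $p^{1/(p-1)}$ would require a strictly stronger divisibility assumption on the $a_I$.
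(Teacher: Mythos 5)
Your proposal is correct and follows essentially the same route as the paper: both convert the hypothesis $a_I/\abs{I}!\in\ZZ_p$ via Legendre's formula into the bound $\nu(a_I)\geq \frac{\abs{I}}{p-1}-O(\log_p\abs{I})$ and then observe that the exponential decay coming from $R<p^{1/(p-1)}$ dominates the logarithmic correction. Your use of the exact digit-sum form $\nu(n!)=(n-s_p(n))/(p-1)$ is, if anything, slightly cleaner than the inequality the paper quotes.
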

\begin{proof}
  By Legendre's formula, for integers $m \geq 0$ we have
  \[
  \nu(m!) \geq \frac{m}{p-1}-(p-1)\log_p(m).
\]
Therefore, for all multisets $I$ with $\abs{I}$ large enough, our hypothesis on $v$ states that
\[
  \nu(a_I) \geq \frac{\abs{I}}{p-1} - (p-1)\log_p(\abs{I}).
\]
Multiplying by $-1$ and raising to the $p$th power yields
\[
\abs{a_I} \leq (p^{1/(p-1)})^{-\abs{I}}\cdot p^{(p-1)\log_p(\abs{I})}
\]
Hence if $1 \leq R < p^{1/(p-1)}$ and we write $R = p^{\alpha}$ where $0 \leq \alpha < 1/(p-1)$, then we find that
\[
  \abs{a_I}R^{\abs{I}} \leq  p^{(\alpha - \frac{1}{p-1})\abs{I} + (p-1)\log_p(\abs{I})}.
\]
But, since linear growth outpaces logarithmic growth, and since $\alpha - \frac{1}{p-1} < 0$, this goes to zero as $\abs{I}$ grows, which proves the lemma.
\end{proof}

\section{The square bracket formalism and the operator $L[0]$}\label{Ssqbrack}
This Section, which to some extent is a continuation of the previous Section, deals with the so-called square bracket formalism, sometimes also refered to as VOAs on a cylinder, or
genus one VOAs.\ In particular, we look closely at a certain operator $L[0]$. The nature of its point spectrum is the fulcrum upon which the calculations of the present paper rest.

\subsection{The square bracket VOA}\label{SSsqVOA}
Although we are mainly concerned with the case of the rank $1$ Heisenberg VOA $S_{\alg}$, the general case is not much different so we work generally, at least at the outset.\ The basic idea was introduced by Zhu \cite{Zhu} and further discussion may be found in \cite{DLMModular, MT2}.\ It is important to point out that while these references all work over the field $\mathbf{C}$ the theory is unchanged if we work over any base field of characteristic $0$ such as $\QQ_p$, the main case of interest to us.

Given a VOA $(V, Y, \mathbf{1}, \omega)$ there is a second quadruple $(V, Y[\ ], \mathbf{1}, \tilde{\omega})$, where the underlying space $V$ as well as the vacuum vector $\mathbf{1}$ coincide and the square bracket conformal vector is
$\tilde{\omega}\df\omega-\frac{c}{24}\mathbf{1}$; here $c$ is the central charge of $V$, which is equal to $1$ for $S_{alg}$.\ The critical point here is the definition of the new vertex operator $Y[\ ]$, defined by
\begin{align}\label{defsqbk}
   Y[v, z] \df e^{kz}Y(v, e^z-1)=: \sum_{n\in \ZZ} v[n]z^{-n-1}\ \ (v \in V_k).
\end{align}
Here, we write $V=\oplus_k V_k$ and extend the definition of $Y[v, z]$ to all $v\in V$ by linearity in $v$.\ In particular, for the Virasoro vector the notation is
\[
  \sum_{n\in \ZZ} L[n]z^{-n-2}\df Y[\tilde{\omega}, z]. 
\]
For example, one can check from the definitions (loc.\ cit.) that
\begin{align}
 L[-1]&=L(0)+L(-1), \label{-1mode}\\
 L[0] &= \sum_{n\geq 0} \frac{(-1)^{n+1}}{n(n+1)}L(n).\label{0mode}
\end{align}
We shall make good use of both of these formulas.

Now the quadruple $(V, Y[\ ], \mathbf{1}, \tilde{\omega})$ is itself a VOA, indeed it is isomorphic to the original VOA $V$.\ This is not obvious, and was first proved in \cite{Zhu} in some special cases including the case at hand when $V=S_{\alg}$. The main utility of this fact for us right now is that we obtain a second integral grading on the space $V$, i.e., the square bracket conformal grading defined by
\[
 V = \oplus_k V_{[k]}   
\]
where $V_{[k]} \df \{v\in V \mid L[0]v=kv\}$.\ It is true \cite{DLMModular} that for each integer $n$ we have
\[
    \oplus_{k\leq n} V_k = \oplus_{k\leq n} V_{[k]}
\]
but in practice it is awkward to express a given state $v\in V_{[k]}$ as a linear combination of states in $V_n$ for $n\leq k$ and vice-versa.

\subsection{The character map}\label{SScharmap}
In this Subsection we will consider the trace functions and $q$-expansions associated to a VOA of central charge $c$.\ We begin with a general VOA that carries the conformal grading $V = \oplus_k V_k$.\ For a state $v\in V_k$, the \emph{zero mode} of $v$ is defined by 
\[
    o(v)= v(k-1)
\]
and extended by linearity to all $v\in V$.\ It is well-known that zero modes preserve the the homogeneous spaces $V_k$, that is $o(v): V_k \rightarrow V_k$. Then we can define a formal $q$-expansion as follows:
\[
 Z(v) = Z(v, \tau)=Z(v, q) \df \Tr_V o(v)q^{L(0)-c/24} = \sum_k \Tr_{V_k} o(v) q^{k-c/24}.
\]
The character map, or $1$-point function,  for $V$ is the  linear map
\[
  Z: V \rightarrow q^{-c/24}\QQ_p[q^{-1}][[q]]   
\]

For example, taking $v=\mathbf{1}$ and $V=S_{\alg}$ we have
\[
  Z(\mathbf{1}) = \frac{1}{\eta(\tau)}. 
\]
Continuing with this special case, the \emph{normalized character map} $f$ for $S_{alg}$ that appears in \eqref{Sdiag} is defined to be $\eta^{-1}Z$, so that
\[
   f(v)=\frac{Z(v)}{\eta}. 
\]

Now we come to the Mason-Tuite theorem \cite{MT2, MT} that gives a complete and explicit description of the character map for $S_{\alg}$.\ This comes in two parts.\ The first part
says that, as explained in the Introduction,  $f$ induces a linear map
\[
 f: \oplus_k (S_{alg})_{[k]}\longrightarrow \QQ_p[E_2, E_4, E_6].   
\]
Much more  is true, however.\ As discussed in the Introduction,  the normalized character map $f$ is graded in the sense that if $v\in V_{[k]}$ then 
$f(v) = g(\tau)$ for some quasimodular form $g(\tau)$ of weight $k$.\ This is a fundamental feature when  considering the corresponding $p$-adic VOAs.\ Finally, $f$ surjects onto the full algebra of quasimodular forms.

The second part of the Theorem is an \emph{explicit} formula for $Z(v)$.\ This is crucial for the applications in this paper.\
To describe this we need some notation.\ Let
$h\in (S_{\alg})_1$ be the canonical weight $1$ state (cf.\ Subsection \ref{SS2.2}).\ Then the corresponding square bracket VOA is also a rank $1$ Heisenberg VOA with the same canonical generator $h \in (S_{\alg})_{[1]}$. The square bracket analog of \eqref{statedef} is
\begin{align*}
  v&=h[-n_1]....h[-n_t].\mathbf{1},   &(n_1\geq n_2\geq ...\geq n_t\geq 1,\ k=\sum_i n_i)
\end{align*}
for a state $v\in(S_{\alg})_{[k]}$.\ This defines a basis of $S_{\alg}$ as we range over all partitions.\ Then we have
\begin{align}\label{Zform}
  f(v)   &= \sum_{\sigma}\prod_{(rs)}
 \widehat{E}_{n_r+n_s}(\tau)
\end{align}
where $\sigma\in \Sigma_t$ ranges over all fixed-point free involutions of the symmetric group $\Sigma_t$ of degree $t$, $(rs)$ ranges over the transpositions in $\Sigma_t$ whose product is equal to $\sigma$, and we are using the notational conventions of Subsection \ref{SSEis} for the Eisenstein series.

Some special cases of \eqref{Zform} will be particularly useful for us.\ The first already played a r\^{o}le in \cite[Theorem 10.1]{FMpadic}.
\begin{lem}\label{lemvr}
For an odd positive integer $r$ introduce the square bracket states 
\[
  v_r\df \frac{(r-1)!}{2} h[-r]h[-1]\mathbf{1}.        
\]
Then
\[ 
f(v_r) = G_{r+1}.
\]
\end{lem}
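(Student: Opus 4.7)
The plan is a direct application of the Mason--Tuite character formula \eqref{Zform} to the state $h[-r]h[-1]\mathbf{1}$, followed by bookkeeping of the normalizations in \eqref{wideE} and of the factor relating $E_k$ and $G_k$.

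First I would observe that $h[-r]h[-1]\mathbf{1}$ is exactly the square-bracket basis element associated to the two-part partition $(n_1,n_2)=(r,1)$, so in the notation of \eqref{Zform} we have $t=2$. The symmetric group $\Sigma_2$ contains a unique fixed-point-free involution, namely the transposition $\sigma=(1\,2)$, so the sum in \eqref{Zform} collapses to the single product over the transposition $(1\,2)$, giving
\[
f\bigl(h[-r]h[-1]\mathbf{1}\bigr) \;=\; \widehat{E}_{n_1+n_2}(\tau) \;=\; \widehat{E}_{r+1}(\tau).
\]

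Next I would unwind \eqref{wideE} to express $\widehat{E}_{r+1}$ as an explicit scalar multiple of the classical Eisenstein series $E_{r+1}$. Since $r$ is odd, $r+1$ is even, so $E_{r+1}$ is the nonzero Eisenstein series of Subsection \ref{SSEis}, and the sign factor $(-1)^{r+1}$ equals $+1$. Substituting $\{r,s\}=\{r,1\}$ into \eqref{wideE} and simplifying the binomial factor yields $\widehat{E}_{r+1} = r\,E_{r+1}$.

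Combining these computations with the scalar $\tfrac{(r-1)!}{2}$ built into the definition of $v_r$, and recalling from Subsection \ref{SSEis} that $G_k = \tfrac{(k-1)!}{2}E_k$, one obtains
\[
f(v_r) \;=\; \tfrac{(r-1)!}{2}\cdot r\,E_{r+1} \;=\; \tfrac{r!}{2}\,E_{r+1} \;=\; G_{r+1},
\]
which is the desired identity. The only step that is not merely mechanical is checking the coefficient extraction from \eqref{wideE}: as the authors warn in the discussion following that formula, the expression for $\widehat{E}_{r+s}$ treats the two indices asymmetrically, so one must apply the definition consistently with the pair $\{r,1\}$ arising from the transposition. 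For a two-part partition this is unambiguous and the odd-parity hypothesis on $r$ is precisely what makes the sign and combinatorial factors combine cleanly into the stated constant.
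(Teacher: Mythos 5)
Your overall strategy is exactly the intended one: the paper states this lemma without proof as a direct special case of \eqref{Zform} (and proves the companion Lemma \ref{lemumnstate} in precisely this way), and your reduction to the single fixed-point-free involution of $\Sigma_2$, giving $f(h[-r]h[-1]\mathbf{1})=\widehat{E}_{r+1}$ for the pair $\{r,1\}$, is correct, as is the final bookkeeping $\tfrac{(r-1)!}{2}\cdot r\,E_{r+1}=\tfrac{r!}{2}E_{r+1}=G_{r+1}$.

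The gap is in the one step you yourself flag as ``not merely mechanical'': the claim that substituting the pair $\{r,1\}$ into \eqref{wideE} ``yields $\widehat{E}_{r+1}=rE_{r+1}$.'' It does not, if you take \eqref{wideE} at face value. With the ordering $(r,s)=(r,1)$ the printed formula gives $(-1)^{r+1}r\binom{r}{1}E_{r+1}=r^2E_{r+1}$ (for $r$ odd), while with the ordering $(1,r)$ it gives $(-1)^{2}\cdot 1\cdot\binom{r}{r}E_{r+1}=E_{r+1}$; neither equals $rE_{r+1}$, and the discrepancy between the two orderings shows that the displayed coefficient $(-1)^{r+1}r\binom{r+s-1}{s}$ is not in fact symmetric in $r$ and $s$, contrary to the sentence following \eqref{wideE}. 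The resolution is that \eqref{wideE} contains a typo: the Mason--Tuite normalization is
\[
\widehat{E}_{r+s}=(-1)^{r+1}\frac{(r+s-1)!}{(r-1)!\,(s-1)!}\,E_{r+s}
=(-1)^{r+1}\,r\binom{r+s-1}{r}E_{r+s},
\]
which is genuinely symmetric (the sign is immaterial since $E_{r+s}=0$ for $r+s$ odd) and reduces to $\widehat{E}_{r+1}=rE_{r+1}$ for the pair $\{r,1\}$. One can confirm this is the right normalization because it is the only one making the lemma consistent with the constant term $-\tfrac{1}{2}\tfrac{B_{r+1}}{r+1}$ of $v_r$ recorded from \cite[Corollary 10.3]{FMpadic}, and because the coefficients $-6$ and $30$ used in the proof of Lemma \ref{lemumnstate} (where $r=s$, so the two versions agree) are unaffected. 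So your conclusion is correct, but as written the coefficient extraction is asserted rather than derived, and a literal application of the cited formula would have produced $r^2E_{r+1}$ or $E_{r+1}$ instead; you need to identify and correct the normalization to close the argument.
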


\begin{lem}\label{lemumnstate} For  positive integers $m$, $n$
introduce the square bracket states
\begin{align*}
 u_{m,n}\df& (-1)^{m+n}\frac{120^m1008^n}{(2m-1)!!(2n-1)!!} h[-2]^{2m}h[-3]^{2n}\mathbf{1}.
\end{align*}
Then
\[
  f(u_{m,n})=  Q^m R^n.
\]
\end{lem}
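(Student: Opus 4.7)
The plan is to compute $f(u_{m,n})$ by applying the Mason--Tuite formula \eqref{Zform} directly to the state $v = h[-2]^{2m}h[-3]^{2n}\mathbf{1}$. Here $t = 2m+2n$ and the sequence $(n_1,\ldots,n_t)$ consists of $2m$ copies of $2$ followed by $2n$ copies of $3$. A fixed-point free involution $\sigma$ on $\{1,\ldots,t\}$ contributes $\prod_{(rs)}\widehat{E}_{n_r+n_s}$; since every pair $(r,s)$ couples two indices with $n_i=2$, two indices with $n_i=3$, or one of each kind, the only factors that can possibly appear are $\widehat{E}_4$, $\widehat{E}_5$, and $\widehat{E}_6$.

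First I would observe that $\widehat{E}_5$ vanishes identically, being a scalar multiple of the odd-weight Eisenstein series $E_5 = 0$; hence any involution with at least one mixed pair contributes zero. Consequently only those $\sigma$ that pair the $2m$ indices with $n_i = 2$ among themselves and the $2n$ indices with $n_i = 3$ among themselves survive. The number of such involutions is $(2m-1)!!\,(2n-1)!!$, and each contributes $\widehat{E}_4^m\widehat{E}_6^n$, giving
\[
f\bigl(h[-2]^{2m}h[-3]^{2n}\mathbf{1}\bigr) = (2m-1)!!\,(2n-1)!!\,\widehat{E}_4^m\widehat{E}_6^n.
\]

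Next I would evaluate $\widehat{E}_4$ and $\widehat{E}_6$ from \eqref{wideE} with $r=s=2$ and $r=s=3$, obtaining $\widehat{E}_4 = -6E_4$ and $\widehat{E}_6 = 30E_6$. The Bernoulli values $B_4 = -1/30$ and $B_6 = 1/42$, together with the definitions of $E_k$, $Q$, and $R$ in Subsection \ref{SSEis}, yield $E_4 = Q/720$ and $E_6 = -R/30240$, so $\widehat{E}_4 = -Q/120$ and $\widehat{E}_6 = -R/1008$. Substituting these identities into the displayed formula, the prefactor $(-1)^{m+n}\tfrac{120^m\,1008^n}{(2m-1)!!\,(2n-1)!!}$ in the definition of $u_{m,n}$ cancels the signs and numerical normalizations exactly, leaving $f(u_{m,n}) = Q^m R^n$.

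Nothing here is genuinely subtle. The only points that require care are (i) recognising that the vanishing $E_5=0$ collapses the sum over fixed-point free involutions in \eqref{Zform} to the single monomial $\widehat{E}_4^m\widehat{E}_6^n$ weighted by the product of pair-partition counts of the two blocks, and (ii) bookkeeping of the several sign and scalar conventions relating $\widehat{E}_k$ to $E_k$ and $E_4, E_6$ to $Q, R$. Once these conversions are pinned down the identity in the lemma follows mechanically.
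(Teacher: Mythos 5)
Your proposal is correct and follows the same route as the paper's proof: apply the Mason--Tuite formula \eqref{Zform}, use $\widehat{E}_5=0$ to kill the mixed pairings, count the surviving involutions as $(2m-1)!!(2n-1)!!$, and convert normalizations via $\widehat{E}_4=-Q/120$ and $\widehat{E}_6=-R/1008$. All the constants check out against the paper's computation.
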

\begin{proof}
Use the formula \eqref{Zform} with $v=h[-2]^{2m}h[-3]^{2n}\mathbf{1}$.\ To be clear, this will involve a sum of terms each of which involves products of $\widehat{E}_{2+2}$, $\widehat{E}_{2+3}$, $\widehat{E}_{3+3}$.\ However $\widehat{E}_5=0$ so the formula produces a modular form that is a constant times $Q^mR^n$.\ The constant in question is determined by \eqref{wideE} and is equal to
\[
 (2m-1)!!(-6)^m (2n-1)!! (30)^n.
\]
All in all, this shows that  
\begin{align*}
 f(h[-2]^{2m}h[-3]^{2n}\mathbf{1}, \tau) =&(2m-1)!!(-6)^m (2n-1)!! (30)^n \left(\frac{1}{720}\right)^m \left( \frac{-1}{60.504}  \right)^nQ^mR^n\\
 =&(2m-1)!! (2n-1)!! \left(\frac{-1}{120}\right)^m \left( \frac{-1}{1008}  \right)^nQ^mR^n,
 \end{align*}
and the Lemma follows.
\end{proof}

\subsection{$X$-weights in the $p$-adic Heisenberg VOA}\label{SSXwtH}
We have already discussed two types of weights of states in $S_{\alg}$, namely the conformal weights of states given by eigenvalues of the round bracket operator $L(0)$ and similarly the square bracket weights
which are eigenvalues of $L[0]$.\ Such weights are always rational integers.\
Contemplation of diagram \eqref{ugdiag} gives rise to new notions of weights of states in the completion $S$.\ This was discussed in the Introduction and we shall take up this phenomenon in the next few  Subsections.

The \emph{$X$-weight} of a state in $S$ arises directly from Serre's result that $p$-adic modular forms carry a weight that lies in weight space $X$, cf.\ Subsection \ref{SSXwt}.\
Suppose that $(u_i)$ is a sequence of states in $S_{\alg}$  such that each $u_i$ has square bracket weight $k_i$, i.e., $L[0]u_i=k_iu_i$.\ And suppose further that $(u_i)$ is a Cauchy sequence in $S_{alg}$.\ Let $u\df\lim_ {i\rightarrow\infty} u_i\in S$.\ Because $f$ is $p$-adically continuous then
$(f(u_i))$ is a Cauchy sequence in $\QQ_p[E_2, E_4, E_6]$  where each term $f(u_i)$ has a weight $k_i$.\ The limit of $(f(u_i))$ is thus a Serre $p$-adic modular form.\ Then by Serre's work, the sequence $(k_i)$ is a Cauchy sequence and has a limit $k\in X$.\ This argument shows that the following definition is not vacuous.

\begin{dfn}\label{defXwt}
  Let $(u_i)$ be a Cauchy sequence of square bracket eigenstates as above and let $u\df\lim_{i\rightarrow\infty} u_i$.\ Then we call $k$ the \emph{$X$-weight} of the $p$-adic state $u$.
\end{dfn}
\subsection{Continuity of $L[0]$}
We continue to consider the $X$-weight of $u$ as in Definition
\ref{defXwt}.\ In stark contrast to the weights of a state in an algebraic VOA given by eigenvalues of $L(0)$ or $L[0]$, the $X$-weight of $u$ is not defined to be an eigenvalue of $L[0]$ (or any other operator for that matter).\ In order to relate $X$-weights to the point-spectrum of $L[0]$ we must resort to indirect means.\ The nub of the problem is this:\ we would like to  understand $L[0]v$, however this is not necessarily defined.\ This is due to the fact that, unlike its round bracket counterpart $L(0)$, the operator $L[0]$ is \emph{not a bounded operator on $S$}.\ Indeed it does not converge in the algebra of operators on $S_{alg}$ and in view of the expression \eqref{0mode} this is hardly surprising.\ This circumstance means that we cannot rely on $L[0]$ to map the Cauchy sequence $(u_i)$ to another convergent sequence.

A solution to this dilemma is to ascertain a large enough
closed subspace of $S$ on which $L[0]$ \emph{is} continuous, and then only work with Cauchy sequences $(u_i)$  in this subspace.\ At any rate this is the strategy we employ here.\ It motivates the next result, where we take the closed subspace to be one of the $p$-adic Banach spaces $S_r$ that we recalled in Subsection \ref{SS2.2}.
\begin{thm}\label{thmL}
$L[0]$ is a bounded operator on $S_r$ whenever $r\geq p^{1/p}$.
\end{thm}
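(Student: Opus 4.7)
The strategy is to use the expansion of $L[0]$ from \eqref{0mode}, interpreting it as $L[0] = L(0) + \sum_{n\geq 1} \tfrac{(-1)^{n+1}}{n(n+1)} L(n)$, and to show that this series converges to a bounded operator on $S_r$ when $r \geq p^{1/p}$. I would proceed by first bounding each mode $L(n)$ as an operator on $S_r$, then combining those bounds with $p$-adic estimates on the rational coefficients to identify the critical radius.

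For the individual bounds, the key input is that $h$ is a Virasoro primary of conformal weight $1$, so $[L(n), h(-m)] = m h(n-m)$ and $L(n)\mathbf{1}=0$. Applied to a basis monomial $h^I = h(-n_1)\cdots h(-n_t)\mathbf{1}$, iterated commutation together with normal-ordering of any positive modes $h(n-n_j)$ produced along the way expresses $L(n)h^I$ as a finite linear combination of basis monomials $h^J$ with $|J| = |I| - n$, where each coefficient is a product of at most two of the integers drawn from $\{n_i\}$ and $\{n - n_j\}$. Being integers, these coefficients are $p$-adically bounded by $1$, so the ultrametric inequality yields $|L(n) h^I|_r \leq r^{|I|-n}$. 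Extending by linearity and continuity shows that $L(n)$ is a bounded operator on $S_r$ with operator norm at most $r^{-n}$. Similarly $L(0) h^I = |I|\, h^I$ with $|I| \in \ZZ$, giving operator norm at most $1$.

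Next I would analyze the scalar factors $\tfrac{1}{n(n+1)}$. Since $n$ and $n+1$ are coprime, $\nu(n(n+1)) = \max(\nu(n), \nu(n+1))$, and the worst case arises when $n$ itself is a power of $p$. Taking $n = p^k$ gives $\nu(n(n+1)) = k$, so the $n$-th term contributes operator norm at most $p^k r^{-p^k}$ to $L[0] - L(0)$. Uniform control in $k$ requires $r \geq p^{k/p^k}$ for every $k \geq 1$; the function $k \mapsto k/p^k$ attains its maximum $1/p$ at $k=1$, which is the source of the threshold $r \geq p^{1/p}$ appearing in the statement. A short case check handles the other small values of $n$, and for large $n$ one uses that $n \log_p r$ eventually dominates $\log_p(n+1) \geq \nu(n(n+1))$, so the tail terms tend to $0$. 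Hence the series converges in the operator-norm topology on $\mathrm{End}(S_r)$ to a bounded operator.

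The main obstacle is the combinatorial claim that every coefficient appearing in the expansion of $L(n) h^I$ is an integer. The delicate case is $n_j < n$, where the positive mode $h(n - n_j)$ must be commuted rightward through the remaining creation operators, each passage producing a factor $(n - n_j)$ via $[h(n-n_j), h(-n_i)] = (n - n_j)\delta_{n-n_j,\, n_i}$. One must carefully match contributions across the various orderings to ensure that no rational denominators sneak in and that the resulting integer coefficients genuinely have $p$-adic absolute value at most $1$, so that the key estimate $|L(n)|_r \leq r^{|I|-n} |h^I|_r^{-1}\cdot|h^I|_r$ is not spoiled.
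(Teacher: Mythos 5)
Your proof is correct and follows essentially the same route as the paper: bound each $L(n)$ on $S_r$ by $r^{-n}$ using integrality of its matrix coefficients in the monomial basis together with the degree shift $\abs{K}=\abs{I}-n$, then check that $\abs{1/(n(n+1))}\,r^{-n}$ is uniformly bounded for $r\geq p^{1/p}$. The only cosmetic differences are that you justify the integrality via the primary-field commutator $[L(n),h(-m)]=mh(n-m)$ (the paper subsumes this into Lemma \ref{lemS}), and you locate the extremal term at $n=p^k$ rather than $n+1=p^k$ as the paper does — both cases must be (and are, in your sketch) covered, and both yield the same threshold.
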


Let us first emphasize that the norm $\abs{\cdot}_r$ on $S_r$ is not the same as that for $S=S_1$.\ It is stronger than $\abs{\cdot}_1$ in the sense that a Cauchy sequence in $S$ may not be Cauchy in $S_r$.\ We begin the proof of Theorem \ref{thmL} with
\begin{lem}\label{lemS} Suppose that $u\in S_{\alg}$.\ Then 
each mode $u(n)$ leaves $S_r$ invariant for all integers $n$
and all $r\geq 1$.
\end{lem}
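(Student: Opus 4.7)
By linearity of the correspondence $u \mapsto u(n)$, it suffices to treat the case where $u = h(-n_1)h(-n_2)\cdots h(-n_t)\mathbf{1}$ is a standard basis monomial with $n_1 \geq \cdots \geq n_t \geq 1$. My strategy is to prove directly that $u(n) \colon S_{\alg} \to S_{\alg}$ is bounded with respect to the norm $\abs{\cdot}_r$, and then extend $u(n)$ uniquely by continuity to $S_r$.

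The Heisenberg reconstruction formula gives
\[
Y(u,z) \;=\; \frac{1}{\prod_i(n_i-1)!}\; :\prod_{i=1}^{t} \partial_z^{n_i-1} Y(h,z):,
\]
and extracting the coefficient of $z^{-n-1}$ expresses $u(n)$ as a formal sum of normal-ordered products $:h(k_1)\cdots h(k_t):$ whose scalar coefficients $c_{\vec{k}}$ are products of integer binomials (hence $p$-adic integers of norm $\leq 1$), subject to the constraint $\sum_i k_i = n+1-\sum_i n_i$. When this operator acts on a basis monomial $h^I$, every annihilation mode $h(k_i)$ with $k_i>0$ either annihilates the vector or contributes a $p$-adic integer scalar $k_i I_{k_i}$ and removes one copy of $h(-k_i)$; the zero mode $h(0)$ vanishes; and every creation mode $h(k_i)$ with $k_i<0$ acts by multiplication by $h(k_i)$. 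Consequently, in each surviving contribution, the output monomial $h^J$ satisfies the rigid relation $\abs{J} = \abs{I} - \sum_i k_i = \abs{I} + \sum_i n_i - n - 1$, independent of the particular tuple $\vec{k}$.

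Writing $v = \sum_I a_I h^I \in S_{\alg}$ and $u(n)v = \sum_J b_J h^J$, each coefficient $b_J$ is then a finite sum of terms of the form $c_{\vec{k},I}\, a_I$ in which $\abs{c_{\vec{k},I}}\leq 1$ and $\abs{I}=\abs{J}+n+1-\sum_i n_i$ is forced by $J$. The ultrametric inequality yields
\[
\abs{b_J}\,r^{\abs{J}} \;\leq\; r^{-(n+1-\sum_i n_i)} \sup_{\abs{I}=\abs{J}+n+1-\sum_i n_i}\abs{a_I}\,r^{\abs{I}} \;\leq\; r^{-(n+1-\sum_i n_i)}\,\abs{v}_r,
\]
and taking the supremum over $J$ shows $u(n)$ has $\abs{\cdot}_r$-operator norm at most $r^{-(n+1-\sum_i n_i)}$ on $S_{\alg}$. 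Continuity then extends $u(n)$ to a bounded operator on $S_r$, which is the desired invariance. The main conceptual obstacle is that the formal series defining $u(n)$ is an infinite sum of mode-products, each of which is individually unbounded as the components of $\vec{k}$ range widely; the rigid weight-shift relation above is what saves the day, forcing all contributions to any fixed output $h^J$ to originate from input monomials of a single common weight, so that a uniform $\abs{v}_r$-bound can be read off directly.
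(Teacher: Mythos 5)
Your proof is correct and takes essentially the same approach as the paper's: reduce to a monomial $u$, observe that the matrix entries of $u(n)$ in the monomial basis are $p$-adic integers while the weight shift $\abs{K}-\abs{I}$ is rigidly determined by $u$ and $n$, and conclude by the ultrametric inequality. The only differences are presentational: you justify the integrality of the coefficients via the reconstruction formula (the paper simply asserts it) and package the estimate as an operator-norm bound on $S_{\mathrm{alg}}$ extended by continuity, rather than verifying the decay condition on the image coefficients directly.
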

\begin{proof} We employ the notation of Subsection \ref{SS2.2}.\ We may assume without loss that $u=h^J$ for some $J$.\ Then
$u(n)h^I=\sum_K m_{IK} h^K$ with each $m_{IK}\in\mathbf{Z}$ and $\abs{K}= \abs{I}+\abs{J}-n-1$.\
Let $v\in S_r$ with $v\df\sum_I a_Ih^I$.\ Then
$\lim_{|I|\rightarrow\infty} |a_I|r^{|I|}=0$.

Because $r\geq 1$ then $S_r\subseteq S_1$ and  $u(n)$ preserves limits in $S_1$.\ Therefore
\[
    u(n)v = \sum_I a_I u(n)h^I =\sum_I\sum_K
 a_I m_{IK} h^K.
\]
Then for fixed $n$ and $J$,
\begin{align*}
  \lim_{\abs{K}\rightarrow\infty} \abs{\sum_{I} a_Im_{IK}} r^{|K|} &= \lim_{\abs{I}\rightarrow \infty} \abs{\sum_{I} a_Im_{IK}} r^{\abs{I}+\abs{J}-n-1} \\
 &\leq r^{\abs{J}-n-1}\lim_{\abs{I}\rightarrow\infty}\sup_{\abs{I}}\abs{a_I}r^{\abs{I}} = 0.
\end{align*}
This shows that $u(n)v\in S_r$, thus proving the Lemma.
\end{proof}

\begin{rmk} The Proposition says that each $S_r\ (r\geq 1)$ is a \emph{weak module} for $S_{\alg}$.\ That is, a module with no assumed properties \emph{vis a vis} conformal grading.
\end{rmk}

\begin{proof}[Proof of Theorem \ref{thmL}] Let $w\df\sum_I a_Ih^I$ be a state in $S_r$ for some $r\geq 1$.\ Thus 
\[\lim_{\abs{I}\rightarrow\infty} \abs{a_I}r^{\abs{I}}=0.\] 
Each Virasoro mode $L(n)$ leaves $S_r$ for $r\geq1$ invariant by Lemma \ref{lemS}.\ Thus using equation \eqref{0mode}, in order to prove the Theorem it suffices to show that the operators 
$\tfrac{1}{n(n+1)}L(n)$ have uniformly bounded operator norms on $S_r$ for suitable $r$.

Now $\abs{w}_r\df \sup_{I} \abs{a_I}r^{\abs{I}}$. Set
$L(n)h^I=\sum_K m_{nIK} h^K$ with $m_{nIK}\in\ZZ$ and $\abs{K}= \abs{I}-n$. Then we must consider
\begin{align*}
\sup_{n\geq1} \abs{\tfrac{1}{n(n+1)}L(n)w)}_r  =& \sup_{n\geq1} \abs{\sum_{I, K} \frac{a_Im_{nIK}}{n(n+1)} h^K}_r\\ 
\leq & \sup_{n\geq1} \sup_{I}\abs{ \frac{a_I}{n(n+1)}}r^{\abs{I}-n} =\sup_n r^{-n} abs{\frac{1}{n(n+1)}}\abs{w}_r.
\end{align*}
Thus we have to show that for any fixed $r\geq p^{1/p}$, the expression 
\[
  E_n\df r^{-n}\abs{\frac{1}{n(n+1)}}  
\]
is uniformly bounded for $n\geq 1$.\ If
$n(n+1)$ is coprime to $p$ then $E_n=r^{-n}\leq p^{-n/p}\leq 1$.\ Suppose that $n+1=p^km$ where $k\geq1$ and $m$ is an integer coprime to $p$.\ Then
$E_n = r^{-n}p^k\leq r^{-n}p^{(n+1)/p}\leq p^{-n/p+(n+1)/p}=p^{1/p}$.\ Finally, if $p\mid n$ then by a
similar argument we again get an (even smaller) upper bound.\ We skip the details.\ Thus we have $\abs{E_n}\leq p^{1/p}$ for all $n\geq 1$, and this completes the proof of Theorem \ref{thmL}.
\end{proof}

As previously discussed, we now have
\begin{thm}\label{thmsuwt} Let $r$ be a real number satisfying $r\geq p^{1/p}$.\ Suppose that $(u_i)$ is a  sequence of states in $S_{\alg}$ satisfying $L[0]u_i=k_iu_i$ and assume further that
$(u_i)$ is a Cauchy sequence in $S_r$ with limit $u$.\
 Then $u\in S_r$ has an $X$-weight, say $(s, u)\in\ZZ_p\times\ZZ/(p-1)\ZZ$ (cf.\ Subsection \ref{SSXwt}) and we have
\[
    L[0]u = su.
\]
\end{thm}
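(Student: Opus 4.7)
The plan is to deduce the existence of the $X$-weight directly from Definition \ref{defXwt}, and then to establish $L[0]u = su$ by passing to the limit in $S_r$ and invoking the continuity of $L[0]$ supplied by Theorem \ref{thmL}.

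For the first assertion, since $r \geq 1$ the norm $\abs{\cdot}_r$ dominates $\abs{\cdot}_1$, so a Cauchy sequence in $S_r$ is automatically Cauchy in $S = S_1$ with the same limit $u$. The argument preceding Definition \ref{defXwt} (relying on $p$-adic continuity of $f$ together with Serre's theorem that the weights of a convergent sequence of $p$-adic modular forms converge in $X$) therefore applies, producing a limit $k \in X$ of the integer weights $k_i$; by definition $k$ is the $X$-weight of $u$. Writing $k = (s,\bar u) \in \ZZ_p \times \ZZ/(p-1)\ZZ$ and using that the diagonal embedding $\ZZ \hookrightarrow X$ sends $k_i$ to $(k_i, k_i \bmod (p-1))$, convergence in $X$ forces $k_i \to s$ in $\ZZ_p$.

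For the second assertion, the hypothesis $r \geq p^{1/p}$ and Theorem \ref{thmL} guarantee that $L[0]$ is a bounded, hence continuous, operator on $S_r$. Therefore $L[0]u_i \to L[0]u$ in $S_r$. Since $L[0]u_i = k_iu_i$, it suffices to show that $k_iu_i \to su$ in $S_r$. The ultrametric inequality gives
\[
\abs{k_iu_i - su}_r \leq \max\bigl\{\abs{k_i - s}\cdot\abs{u_i}_r,\ \abs{s}\cdot\abs{u_i - u}_r\bigr\}.
\]
The norms $\abs{u_i}_r$ are bounded since $(u_i)$ is convergent in $S_r$; $\abs{k_i - s} \to 0$ by the first paragraph; $\abs{s} \leq 1$ as $s \in \ZZ_p$; and $\abs{u_i - u}_r \to 0$ by hypothesis. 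Both quantities in the max tend to $0$, so $k_iu_i \to su$, and uniqueness of limits in the Hausdorff space $S_r$ gives $L[0]u = su$.

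The only substantive ingredient is the continuity of $L[0]$ provided by Theorem \ref{thmL}; the rest is a routine manipulation of Cauchy sequences in a Banach space once the topology on $X$ has been unpacked, so I do not anticipate a serious obstacle.
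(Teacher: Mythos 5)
Your proof is correct and follows essentially the same route as the paper's: deduce the $X$-weight from the fact that an $S_r$-Cauchy sequence is $S_1$-Cauchy (so the discussion preceding Definition \ref{defXwt} applies), then use the boundedness of $L[0]$ on $S_r$ from Theorem \ref{thmL} to compute $L[0]u = \lim_i L[0]u_i = \lim_i k_iu_i = su$. The only difference is that you spell out the ultrametric estimate justifying $k_iu_i \to su$, which the paper leaves implicit.
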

\begin{proof} We first point out that because $(v_i)$ is Cauchy in $S_r$ then it is also Cauchy in $S_1$ and it follows from the discussion in Subsection \ref{SSXwt} that $v$ has an $X$-weight $k=(s, u)$ which is the limit of the sequence $(k_i)$ in $X$.\
Using Theorem \ref{thmL} we calculate
\[
  L[0]u = \lim_{i\rightarrow \infty} L[0]u_i =\lim_{i\rightarrow\infty} k_iu_i
  = \left(\lim_{i\rightarrow\infty} k_i\right)v = su
\]
where the last limit is in $\ZZ_p$.\ The Theorem is proved.
\end{proof}

\begin{rmk} We shall see in Section \ref{SLambda} that \emph{every} weight $k\in X$ occurs as the $X$-weight of a state in $S_r$ for choices of $r$ that permit application of Theorem \ref{thmsuwt}.\ Then we shall be able to deduce that every $p$-adic integer lies in the point spectrum of $L[0]$.
\end{rmk}

We complete this Section with a related Lemma that we will use later.
\begin{lem}\label{lemhnbnd} For each  integer $n$ and each real number $r\geq 1$,
$h(n)$ is a bounded operator on $S_r$ satisfying
\[
\abs{h(n)}_r \leq r^{-n}.    
\]
\end{lem}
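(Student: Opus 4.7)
The plan is to verify the stated inequality on the standard monomial basis $\{h^I\}$ of $S_{\alg}$ and then extend it to all of $S_r$ using the non-Archimedean nature of the norm $\abs{\cdot}_r$. Note that Lemma \ref{lemS} already guarantees $h(n)$ preserves $S_r$, so it only remains to bound its operator norm.

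I would split into cases according to the sign of $n$. For $n<0$, the mode $h(n)$ acts as a creation operator: $h(n)h^I = h^{I\cup\{n\}}$, and since $\abs{I\cup\{n\}} = \abs{I}-n$, we get $\abs{h(n)h^I}_r = r^{\abs{I}-n} = r^{-n}\abs{h^I}_r$ with equality. For $n=0$, the operator $h(0)$ annihilates $\mathbf{1}$ and commutes with each $h(-m)$ (as $[h(0),h(-m)] = 0$), so $h(0) = 0$ on $S_{\alg}$ and the bound is trivial. For $n>0$, one uses the commutation relation $[h(n),h(-m)] = n\delta_{n,m}\,\id$ together with $h(n)\mathbf{1}=0$ to commute $h(n)$ to the right through each creation operator in $h^I = h(-n_1)\cdots h(-n_t)\mathbf{1}$; this yields
\[
h(n) h^I \;=\; n\cdot\#\{i: n_i = n\}\cdot h^{I\setminus\{-n\}},
\]
where $I\setminus\{-n\}$ means deleting one copy of $-n$ from the multiset (interpreted as $0$ if no such copy exists). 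The scalar coefficient here is a rational integer, hence has $p$-adic absolute value at most $1$, while the resulting monomial lies in weight $\abs{I}-n$. Thus $\abs{h(n)h^I}_r \le r^{\abs{I}-n} = r^{-n}\abs{h^I}_r$.

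To finish, for an arbitrary $v = \sum_I a_I h^I \in S_r$, the strong triangle inequality and the previous estimate give
\[
\abs{h(n)v}_r \;\le\; \sup_I \abs{a_I}\cdot \abs{h(n)h^I}_r \;\le\; r^{-n}\sup_I \abs{a_I}\,r^{\abs{I}} \;=\; r^{-n}\abs{v}_r,
\]
which is the claimed bound. Here one also uses that $h(n)$ commutes with $\abs{\cdot}_1$-limits as in the proof of Lemma \ref{lemS}, so that expanding $v$ term by term and then applying $h(n)$ is legitimate.

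The only step requiring a moment's thought is the annihilation case $n>0$, where in principle several basis vectors of weight $\abs{I}-n$ might combine; but because all intermediate coefficients are ordinary integers and the norm is ultrametric, no accidental cancellation can inflate the estimate, so the argument is essentially mechanical.
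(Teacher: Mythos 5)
Your proof is correct and follows essentially the same route as the paper: both arguments split into the cases $n>0$, $n<0$, $n=0$, observe that $h(n)$ sends a basis monomial $h^I$ to an integer multiple of a monomial of weight $\abs{I}-n$ (the paper writes this as $h(n)w = n\sum_J a_I e_I h^J$ with $e_I\in\ZZ$, which is exactly your count $\#\{i: n_i=n\}$), and then conclude via the ultrametric inequality that the integer coefficients cannot inflate the norm. Your write-up is merely a bit more explicit about the value of the integer coefficient; there is no substantive difference.
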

\begin{proof}
Note that $h(n)$ acts on $S_r$ by Lemma \ref {lemS}.\ As before, let $w\df\sum_I a_Ih^I \in S_r$, so that $\abs{w}_r = \sup_{I} \abs{a_I}r^{\abs{I}}$ and suppose first that $n>0$.\ Then there are rational integers $e_I$ such that $h(n)w = n\sum_J a_I e_I h^J$ and $J$ ranges over those index sets satisfying $-n\in I$ and
 $J=I\setminus\{-n\}$.\
Then we find that
\[
  \abs{h(n)w}_r = \abs{n\sum_J a_I e_I h^J}_r  \leq sup_I\abs{a_I}r^{\abs{I}-n}
  =\abs{w}_r r^{-n}.
\]
This completes the proof of the Lemma in  case $n>0$.\ If $n<0$ the proof is very similar but easier because $h(n)$ is then a creation operator.\ If $n=0$ then $h(0)=0$ and the result is obvious.\ This completes the proof of the Lemma in all cases.
\end{proof}

\section{A $\Lambda$-adic family of states}\label{SLambda}
We begin with the square bracket states $v_r\in S_{\alg}$ defined in Lemma \ref{lemvr}.\ Let us recall from Section 10, and especially Corollary 10.3 of \cite{FMpadic} that we have 
\begin{align*}
 u_r  &\df \frac{(1-p^r)}{2}\sum_{m=0}^{\infty}c(r,m)h(-m-1)h(-1)\mathbf{1} - \frac{(1-p^r)}{2}\frac{B_{r+1}}{r+1}\mathbf{1} = (1-p^r)v_r
\end{align*}
where 
\[
  c(r,m) \df \sum_{j=0}^m (-1)^{m+j}\binom mj (j+1)^{r-1} = m! \stirling{r}{m+1}.
\]
(Cf.\ Subsection \ref{SSspecnos} for notation.)\ Notice that if $r$ converges to some $p$-adic value through a sequence of positive integers, then $u_r$ will have a $p$-adic limit if, and only if, $v_r$ does, in which case the two limits are equal.

 In order to effect an analytic continuation of $c(r, m)$ to all of weight space, we make two adjustments above. First define
\[
  c_p(r,m) \df \sum_{\substack{j=0\\ p \nmid j+1}}^m (-1)^{m+j}\binom mj (j+1)^{r-1}.
\]
\begin{lem}
  \label{l:cpvaluation}
For all $r \in X$ and $m \in \ZZ_{\geq 0}$ we have $c_p(r,m) \in m!\ZZ_p$.
\end{lem}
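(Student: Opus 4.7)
The plan is to combine continuity in the weight parameter with density of the integers in $X$. For each fixed $m \geq 0$, the summands $(-1)^{m+j}\binom{m}{j}(j+1)^{r-1}$ occurring in $c_p(r,m)$ involve only $j$ with $p \nmid j+1$, so that $j+1 \in \ZZ_p^\times$ and $(j+1)^{r-1} = \veps(r-1)(j+1)$ is continuous in $r \in X$ by continuity of the character map $\veps\colon X \to V_p$ of Subsection \ref{SSXwt}. As a finite sum of continuous functions, $r \mapsto c_p(r,m)$ is therefore continuous on $X$. Since $m!\ZZ_p$ is closed in $\QQ_p$, it suffices to verify the inclusion $c_p(r,m) \in m!\ZZ_p$ on a dense subset of $X$.

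Given any target $r = (s,u) \in \ZZ_p \times \ZZ/(p-1)\ZZ$, the Chinese Remainder Theorem (using $\gcd(p-1,p^N) = 1$) produces positive integers $r_k$ satisfying $r_k \equiv u \pmod{p-1}$ and $r_k \equiv s \pmod{p^N}$; by adding multiples of $(p-1)p^N$, such $r_k$ can be taken arbitrarily large while still converging to $r$. For such an integer $r_k \geq 1$, the identity recorded just above the statement of the lemma gives the full sum $c(r_k,m) = m!\stirling{r_k}{m+1} \in m!\ZZ$, so that subtracting off the terms with $p \mid j+1$ yields
\[
c_p(r_k,m) \;=\; m!\stirling{r_k}{m+1} \;-\; \sum_{\substack{0 \leq j \leq m\\ p \,\mid\, j+1}}(-1)^{m+j}\binom{m}{j}(j+1)^{r_k-1}.
\]

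It remains to estimate the correction sum $p$-adically for $r_k$ large. Dividing each of its terms by $m!$ gives $\pm(j+1)^{r_k-1}/(j!(m-j)!)$, and Legendre's formula bounds $\nu(j!) + \nu((m-j)!) \leq m/(p-1)$, while $p \mid j+1$ forces $\nu((j+1)^{r_k-1}) \geq r_k - 1$. Consequently each term divided by $m!$ has $p$-adic valuation at least $(r_k - 1) - m/(p-1)$, which is nonnegative as soon as $r_k \geq 1 + m/(p-1)$. Hence $c_p(r_k,m)/m! \in \ZZ_p$ for all sufficiently large $r_k$ in the approximating sequence, and passing to the limit via the continuity established in the first paragraph gives $c_p(r,m)/m! \in \ZZ_p$ for every $r \in X$. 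No serious obstacle arises in this argument; the only mildly delicate step is the invocation of CRT to approximate an arbitrary weight by large positive integers in a prescribed residue class modulo $p-1$.
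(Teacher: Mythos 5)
Your proof is correct and follows essentially the same route as the paper's: approximate $r$ by large positive integers $r_k$ in the appropriate congruence class, use $c(r_k,m)=m!\stirling{r_k}{m+1}\in m!\ZZ$, observe that the discarded terms with $p\mid j+1$ become $p$-adically negligible as $r_k\to\infty$, and conclude by closedness of $m!\ZZ_p$. The only cosmetic difference is that you phrase the limiting step as continuity of $c_p(\cdot,m)$ plus density of large integers, whereas the paper directly shows $c(r_k,m)\to c_p(r,m)$; the content is identical.
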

\begin{proof}
Notice that if $r$ is a positive integer then $c(r,m) \in m!\ZZ$. Let $r \in X$, and let $r_n$ denote an increasing sequence of positive integers that converges to $r$ in weight space.\ Then in particular $r_n \equiv r \pmod{p-1}$ for all $n$.\ Then the terms $(j+1)^{r_n-1}$ for $p \mid (j+1)$ converge to $0$ in $\ZZ_p$ as $n$ grows, and we see that \[\lim_{n \to \infty} c(r_n,m) = c_p(r,m),\] by Euler's theorem that $(j+1)^{\phi(p^{n})} \equiv 1 \pmod{p^n}$ when $\gcd(p,j+1) = 1$.\ Since each term $c(r_n,m)$ is contained in the closed subset $m!\ZZ_p$ of $\ZZ_p$, so is the limit $c_p(r,m)$.\ This concludes the proof.
\end{proof}

Next, for each $r \in X\setminus\{-1\}$ we write
\begin{equation}\label{grdef}
  g\langle r+1\rangle \df \frac 12\sum_{m=0}^\infty c_p(r,m)h(-m-1)h(-1)\mathbf{1}+\frac{1}{2}\zeta_p(-r)\mathbf{1},
\end{equation}
The coefficients of $g\langle r+1\rangle$ can be interpreted as analytic functions in the $\Lambda$-ring via the Mahler transform, as discussed in Subsection \ref{ss:Lambda}. Thus we view $g\langle r+1\rangle$ as a $p$-adic analytic family of states in the Heisenberg algebra. 

We will see below that $g\langle r+1\rangle$ arises as the limit of $v_{r}$ if $r+1$ converges to a point in weight space through a sequence of positive integers of increasing size.\ Notice that each term $c_p(r,m)$ is continuous for all $r \in X$, while the vacuum term is continuous on $X\setminus\{-1\}$.\ Initially this is nothing but a formal series, but observe the following:
\begin{prop}\label{prop4}
  For all real numbers $r$ in the range $1\leq r < p^{1/(p-1)}$ we have
  \[g\langle r+1\rangle \in S_r.\]
\end{prop}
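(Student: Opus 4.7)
The plan is to expand $g\langle r+1\rangle$ in the monomial basis $\{h^I\}$ of $S_{\alg}$ and verify directly the decay condition defining membership in $S_r$, essentially recycling the proof of Lemma \ref{l:Sr1}. The key input is Lemma \ref{l:cpvaluation}, which ensures $c_p(r,m) \in m!\,\ZZ_p$ for every $r \in X$, combined with Legendre's formula to control $\nu(m!)$ from below.

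First I would match the basis vectors: each state $h(-m-1)h(-1)\mathbf{1}$ coincides with the monomial $h^{I_m}$ for $I_m = \{-m-1,-1\}$, so $|I_m| = m+2$ and the coefficient of $h^{I_m}$ in $g\langle r+1\rangle$ is $a_{I_m} = c_p(r,m)/2 \in \tfrac{1}{2}m!\,\ZZ_p$. The vacuum term $\tfrac{1}{2}\zeta_p(-r)\mathbf{1}$ contributes a single bounded coefficient at the empty multiset and is harmless for the decay condition.

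Next I would estimate $|a_{I_m}|\, r^{m+2}$ using Legendre's formula $\nu(m!) \geq m/(p-1) - (p-1)\log_p(m)$ for $m$ large. Writing $r = p^\alpha$ with $0 \leq \alpha < 1/(p-1)$, this yields
\[|a_{I_m}|\, r^{m+2} \;\leq\; p^{(\alpha - 1/(p-1))m \,+\, 2\alpha \,+\, (p-1)\log_p(m)},\]
which tends to $0$ as $m \to \infty$ because $\alpha - 1/(p-1) < 0$ and linear growth outpaces logarithmic growth; hence $g\langle r+1\rangle \in S_r$.

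The only point to be careful about is that I cannot invoke Lemma \ref{l:Sr1} verbatim: its hypothesis requires $a_I/|I|! \in \ZZ_p$, whereas Lemma \ref{l:cpvaluation} only gives $c_p(r,m) \in m!\,\ZZ_p$ while $|I_m| = m+2$. The discrepancy between $m!$ and $(m+2)!$ is just the polynomial factor $(m+1)(m+2)$, which is swallowed by the $(p-1)\log_p(m)$ error term in the estimate above, so this is a purely bookkeeping issue rather than a genuine obstacle. This is precisely the sort of mild strengthening of Lemma \ref{l:Sr1} that the remark following that lemma advertises.
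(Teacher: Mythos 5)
Your proposal is correct and follows essentially the same route as the paper's own proof: identify $h(-m-1)h(-1)\mathbf{1}$ with $h^{I}$ for $\abs{I}=m+2$, apply Lemma \ref{l:cpvaluation} together with Legendre's formula, and conclude from $\alpha - \tfrac{1}{p-1}<0$ that $\abs{c_p(r,m)}R^{\abs{I}}\to 0$. Your remark about the harmless discrepancy between $m!$ and $\abs{I}!$ is a fair point of care that the paper handles implicitly by estimating $\nu(c_p(r,m))$ via $\nu(m!)$ directly rather than citing Lemma \ref{l:Sr1} verbatim.
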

\begin{proof}
  The monomial $h(-m-1)h(-1)$ is equal to $h^I$ for $I = (-m-1,-1)$ with $\abs{I} = m+2$. Write $R= p^\alpha$ for $\alpha < \tfrac{1}{p-1}$ as in the proof of Lemma \ref{l:Sr1}, and notice that in this case Lemma \ref{l:cpvaluation} and Legendre's theorem for $\nu_p(m!)$ gives us:
\begin{align*}
  \abs{c_p(r-1,m)}R^{\abs{I}} \leq p^{-\frac{m}{p-1}+(p-1)\log_p(m)}\cdot p^{\alpha(m+2)}\\
  \leq p^{(\alpha - \frac{1}{p-1})m + (p-1)\log_p(m) + 2\alpha}
\end{align*}
In particular, since $\alpha - \frac{1}{p-1}< 0$, the linear term in $m$ of the exponent dominates as $m$ grows and $\lim_{m \to \infty} c_p(r-1,m)R^{\abs{I}}=0$, as we wanted to show. 
\end{proof}

The next result explains why we have chosen to use the notation $g\langle r+1\rangle$ for this interpolated family of $p$-adic states.
\begin{thm}
\label{t:mainLambda}
  Let $\hat{f}$ denote the renormalized character map for the $p$-adic Heisenberg algebra $S$ (cf.\ (\ref{Sdiag})).\ Then for all odd weights $r \in X \setminus\{-1\}$, we have
  \[\hat{f}(g\langle r+1\rangle) = G_{r+1}^*\]
  and $\Res_{r=-1}g\langle r+1\rangle = \tfrac{1}{p}-1$.
\end{thm}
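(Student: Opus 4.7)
The plan is to realize $g\langle r+1\rangle$ as a $p$-adic limit of the classical states $u_{r_n}=(1-p^{r_n})v_{r_n}\in S_{\alg}$ recalled at the start of this section, where $(r_n)$ is a sequence of positive odd integers chosen to approach $r\in X\setminus\{-1\}$ in weight-space $X$ while simultaneously tending to $+\infty$ in $\ZZ$, so that $p^{r_n}\to 0$ in $\QQ_p$. Such sequences exist for any odd $r\in X$ by density. From Lemma \ref{lemvr} one has $f(u_{r_n})=(1-p^{r_n})G_{r_n+1}$, so by continuity of $\hat{f}$ on $S$ it suffices to establish two convergence statements: first, that $u_{r_n}\to g\langle r+1\rangle$ in some $S_\rho\subseteq S$ with $1<\rho<p^{1/(p-1)}$; and second, that $(1-p^{r_n})G_{r_n+1}\to G_{r+1}^*$ in the $p$-adic space $M_p$.

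For state convergence, I will compare coefficients: the coefficient of $h(-m-1)h(-1)\mathbf{1}$ in $u_{r_n}$ is $\tfrac{1}{2}(1-p^{r_n})c(r_n,m)$, while in $g\langle r+1\rangle$ it is $\tfrac{1}{2}c_p(r,m)$. Splitting $c(r_n,m)=\sum_j(-1)^{m+j}\binom{m}{j}(j+1)^{r_n-1}$ according as $p\mid (j+1)$ or not, Euler's theorem gives $c(r_n,m)\to c_p(r,m)$, while $p^{r_n}c(r_n,m)\to 0$ since $c(r_n,m)\in m!\ZZ_p$ is bounded. The scalar coefficients $\tfrac{1}{2}\zeta_p(-r_n)$ and $\tfrac{1}{2}\zeta_p(-r)$ agree in the limit by continuity of the Kubota-Leopoldt zeta function away from $s=1$. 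To upgrade this pointwise convergence to convergence in $S_\rho$, I combine the ultrametric inequality with Lemma \ref{l:cpvaluation} to bound the coefficient difference uniformly by $\abs{m!}$, and then Legendre's formula, exactly as in the proof of Proposition \ref{prop4}, forces $\abs{m!}\rho^{m+2}\to 0$; a head/tail split completes the estimate. For character convergence, I expand $(1-p^{r_n})G_{r_n+1}$ and split each $\sigma_{r_n}(n)$ into sums over divisors coprime to $p$ and divisible by $p$. As $r_n\to r$ in $X$ with $r_n\to\infty$, the first tends to $\sigma_r^*(n)$ while the second vanishes; the Bernoulli constant term is already $\tfrac{1}{2}\zeta_p(-r_n)\to\tfrac{1}{2}\zeta_p(-r)$. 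Comparison with (\ref{eq:Gk}) shows the limit is $G_{r+1}^*$, and Serre's principle upgrades coefficient-wise convergence to convergence in $M_p$. This yields $\hat{f}(g\langle r+1\rangle)=G_{r+1}^*$.

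For the residue statement, the series portion $\tfrac{1}{2}\sum_m c_p(r,m)h(-m-1)h(-1)\mathbf{1}$ is analytic in $r\in X$: each $c_p(r,m)$ is entire, and the Proposition \ref{prop4} bound is uniform on compact subsets of $X$. Hence the only pole at $r=-1$ comes from the scalar $\tfrac{1}{2}\zeta_p(-r)\mathbf{1}$, and a change of variable $s=-r$ in the standard Laurent expansion of $\zeta_p$ at $s=1$ yields the residue $\tfrac{1}{p}-1$. The hard part of the argument will be the uniform-in-$m$ control required in the state-convergence step: although each individual coefficient converges rapidly, the series for $g\langle r+1\rangle$ is only a genuine state thanks to the Banach framework $S_\rho$ with $\rho<p^{1/(p-1)}$, and controlling the tails uniformly demands the sharp Bernoulli-Stirling estimates underlying Lemma \ref{l:cpvaluation}.
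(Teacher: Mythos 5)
Your proposal is correct and follows essentially the same route as the paper's proof: approximate $g\langle r+1\rangle$ by the states $u_{r_n}=(1-p^{r_n})v_{r_n}$ along a sequence of odd positive integers $r_n\to r$ in $X$ with $r_n\to\infty$, use Euler's theorem for the uniform coefficient congruences and Serre's Example 1.6 to get $G_{r_n+1}\to G^*_{r+1}$, and read off the residue from the $\zeta_p(-r)$ term since the $c_p(r,m)$ are analytic on all of $X$. The only (harmless) difference is that you establish convergence in $S_\rho$ via a head/tail estimate built on Lemma \ref{l:cpvaluation}, where the paper contents itself with the uniform congruence $(1-p^{r_n})c(r_n,m)\equiv c_p(r,m)\pmod{p^{n+1}}$, giving convergence in $S$ itself, which already suffices for continuity of $\hat{f}$.
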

\begin{proof}
  For the residue computation, notice that each $c_p(r,m)$ term is analytic on all of $X$, and so the residue comes entirely from $\zeta_p(-r)$, which is known to have a residue of $\tfrac 1p-1$ at $r=-1$.

  For the character computation we proceed by $p$-adic approximation following \cite{Serre2}, Example 1.6.\ Choose a sequence $r_n\geq 3$ of increasing positive integers that converge to $r$ in weight space, such that $r_n \equiv r\pmod{\phi(p^{n+1})}$ and also $r_n \geq n+1$ for all $n$.\ Then as in \cite{Serre2}, top of page 206, $G_{r_n+1} \to G^*_{r+1}$.\ We have $f(v_{r_n}) = G_{r_n+1}$ by Lemma \ref{lemvr} so our result will follow by continuity of the map $f$ if we can show that the sequence $v_{r_n}$ converges to $g\langle r+1\rangle$.

 The vacuum term is handled by theory of the $p$-adic zeta function as described in Example 1.6 of \cite{Serre2}. Thus it remains to show that the terms $(1-p^{r_n})c(r_n,m)$ converge to $c_p(r,m)$ uniformly in $m$. Euler's identity $x^{\phi(p^{m+1})} \equiv 1 \pmod{p^{m+1}}$ whenever $p \nmid x$ implies that
  \[
  (1-p^{r_n})c(r_n,m) \equiv c_p(r,m) \pmod{p^{n+1}}
\]
for all $m$. Thus we obtain the desired convergence $v_{r_n} \to g\langle r+1\rangle$ and this concludes the proof.
\end{proof}

Now we can prove
\begin{thm}\label{thmL0eigen}
Let $r$ be a real number satisfying 
$p^{1/p}\leq r<p^{1/(p-1)}$.\ Then the following hold:
\begin{enumerate}
\item[(a)] For any weight $t\in X$ there is a nonzero state $u\in S_r$ that has $X$-weight $t$.
\item[(b)] For any $s\in \ZZ_p$ there is a nonzero state $u\in S_r$ such that  $L[0]u=su$.
\end{enumerate}
\end{thm}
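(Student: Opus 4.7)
Part (b) reduces to part (a) via Theorem \ref{thmsuwt}: given $s\in\ZZ_p$, select any $t\in X$ whose $\ZZ_p$-component is $s$, invoke part (a) to obtain a nonzero $u\in S_r$ of $X$-weight $t$, and apply Theorem \ref{thmsuwt} to conclude $L[0]u=su$.

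For part (a) my plan splits into two cases. When $t=0$, take $u=\mathbf{1}\in S_{\alg}\subseteq S_r$; this is an $L[0]$-eigenstate of eigenvalue $0$ and carries $X$-weight $0$ via the constant sequence. When $t\in X\setminus\{0\}$, set $\kappa\df t-1\in X\setminus\{-1\}$ and take $u\df g\langle \kappa+1\rangle=g\langle t\rangle$ from \eqref{grdef}. Proposition \ref{prop4} places $u$ in $S_r$ for the stipulated range of $r$, and $u$ is nonzero because $c_p(\kappa,0)=1$ contributes the nonzero coefficient $\tfrac{1}{2}$ on the basis vector $h(-1)^2\mathbf{1}$.

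To compute the $X$-weight of $u=g\langle t\rangle$, I would exhibit a Cauchy sequence of integer square-bracket eigenstates in $S_r$ converging to it. Choose a sequence of positive integers $\kappa_n\to\kappa$ in $X$ with $\kappa_n\to\infty$ in $\NN$ (possible by density of positive integers in $X$; when $\kappa$ is itself a positive integer, perturb by multiples of $\phi(p^n)$). Define $v_{\kappa_n}\df\frac{(\kappa_n-1)!}{2}h[-\kappa_n]h[-1]\mathbf{1}$ by the formula of Lemma \ref{lemvr}, extended verbatim to every positive integer; this is an $L[0]$-eigenstate of eigenvalue $\kappa_n+1$, and $\kappa_n+1\to t$ in $X$. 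I then establish $v_{\kappa_n}\to g\langle t\rangle$ in $\abs{\cdot}_r$ by decomposing
\[
v_{\kappa_n}-g\langle t\rangle=\bigl(v_{\kappa_n}-g\langle \kappa_n+1\rangle\bigr)+\bigl(g\langle \kappa_n+1\rangle-g\langle t\rangle\bigr)
\]
and handling each summand separately. The first summand is controlled because $c(\kappa_n,m)-c_p(\kappa_n,m)$ is a sum over indices $j$ with $p\mid j+1$, each term of $p$-adic valuation at least $\kappa_n-1$; the vacuum discrepancy collapses to $-p^{\kappa_n}B_{\kappa_n+1}/(2(\kappa_n+1))$ through the Kummer-style identity $\zeta_p(-\kappa_n)=-(1-p^{\kappa_n})B_{\kappa_n+1}/(\kappa_n+1)$ when $\kappa_n$ is odd, and is zero when $\kappa_n$ is even (since both $B_{\kappa_n+1}$ and $\zeta_p(-\kappa_n)$ vanish). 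The second summand tends to zero by pointwise continuity of $c_p(\cdot,m)$ and $\zeta_p$ on $X\setminus\{1\}$. Theorem \ref{thmsuwt} then produces $X$-weight $t$ for $g\langle t\rangle$ and the identity $L[0]g\langle t\rangle=sg\langle t\rangle$ with $s$ the $\ZZ_p$-component of $t$.

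The main obstacle is upgrading these coefficient-wise estimates to convergence in the stronger norm $\abs{\cdot}_r$. The decisive input is the uniform bound $c_p(\kappa,m),c(\kappa,m)\in m!\ZZ_p$ from Lemma \ref{l:cpvaluation} and its integer analog, which combined with Legendre's formula and the hypothesis $r<p^{1/(p-1)}$ yields a uniform tail bound $\abs{c(\kappa,m)}r^{m+2},\abs{c_p(\kappa,m)}r^{m+2}\to 0$ as $m\to\infty$, independent of $\kappa$. A standard head-tail decomposition then promotes pointwise convergence in $m$ to norm convergence in $S_r$. The lower bound $r\geq p^{1/p}$ is used precisely to invoke Theorem \ref{thmL}, ensuring that $L[0]$ operates continuously on $S_r$ and making the application of Theorem \ref{thmsuwt} legitimate.
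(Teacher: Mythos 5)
Your proof is correct, and for even weights (together with the reduction of (b) to (a) via Theorem \ref{thmsuwt}) it coincides with the paper's argument. The divergence is in how odd weights $t$ are handled. The paper produces odd-weight states indirectly: it takes a nonzero even-weight limit state $u\neq\mathbf{1}$, applies $L[-1]$ (bounded on $S_r$ by \eqref{-1mode} and Lemma \ref{lemS}), uses the commutator $[L[0],L[-1]]=L[-1]$ to shift each approximant's eigenvalue from $k_i$ to $1+k_i$, invokes Lemma \ref{lemkerL[-1]} to see that $L[-1]u\neq 0$, and special-cases $t=1$ with the generator $h$. You instead extend the $\Lambda$-adic construction itself to odd $t$, approximating $g\langle t\rangle$ by the states $v_{\kappa_n}$ with $\kappa_n$ \emph{even}; this is legitimate because the round-bracket expansion of $h[-\kappa]h[-1]\mathbf{1}$ and the congruence $(1-p^{\kappa_n})c(\kappa_n,m)\equiv c_p(\kappa,m)$ are parity-independent, and the vacuum terms all vanish on this branch ($B_{\kappa_n+1}=0$ for even $\kappa_n\geq 2$, and $\zeta_p$ vanishes on the corresponding component of weight space). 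Your route is more uniform --- all nonzero $t$ at once, $t=1$ included, with nonvanishing read off from the coefficient $\tfrac12$ of $h(-1)^2\mathbf{1}$ rather than from a kernel lemma --- and it is more careful on a point the paper elides: convergence of the approximants in the stronger norm $\abs{\cdot}_r$ rather than merely in $S_1$ (your uniform $m!\ZZ_p$ tail bound plus head--tail splitting), which is genuinely needed to invoke Theorem \ref{thmsuwt}. One caveat worth making explicit: on the odd branch the characters $f(v_{\kappa_n})$ are identically zero, so the convergence of the weights $\kappa_n+1$ cannot be extracted from Serre's theory as in the discussion preceding Definition \ref{defXwt}; it holds in your setup only because you arrange it by construction, which is fine but should be said. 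The paper's approach buys economy (no re-run of the interpolation argument) at the cost of the extra kernel lemma and the $t=1$ special case.
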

\begin{proof}
Assuming the truth of part (a), (b) is an immediate consequence of Theorem \ref{thmsuwt}. It is here that we use the condition $r\geq p^{1/p}$.

As for (a), we first treat the case of \emph{even} weights $t$.\ Indeed, if 
$t\neq 0$ then we may take $u=g\langle t\rangle$ by Theorem \ref{t:mainLambda}.\ On the other hand, if $t=0$ we can simple choose $u=\mathbf{1}$.\ It remains to prove (a) in the case that $t$ is an \emph{odd} weight.\ With this in mind, assume that $(u_i)$ is a Cauchy sequence in $S_r$ with $u\df\lim_{i\rightarrow i} u_i$ such that $u$ has an \emph{even} $X$-weight, call it $k\in X$.\ We have already seen that such limit states exist in $S_r$.\ We will show that as long as $u\neq\mathbf{1}$ then $L[-1]u$ is a nonzero state in $S_r$ with corresponding $X$-weight $1+k$.\ This will suffice to complete the proof of the Theorem for all weights except $t=1$.\ But here we can simply choose the weight $1$ generator $h=h[-1]\mathbf{1}$ of $S_{\alg}$.

Our jumping-off point is the  expression \eqref{-1mode} for
the operator $L[-1]$.\ Because both $L(-1)$ and $L(0)$ are bounded operators on $S$ then the same is true of $L[-1]$.\ In particular, if $(u_i)$ is as above then $L[-1]u=\lim_{i\rightarrow\infty} L[-1]u_i\in S_r$.\ To verify that the $X$-weight of this limit state is $1+k$, we proceed as follows.\ Because the square bracket vertex operators close on an (algebraic Heisenberg) VOA, then in particular the square bracket Virasoro modes $\{L[n]\}$ close on a Virasoro algebra of central charge $1$.\ What we really need from this is the identity $[L[0], L[-1]] = L[-1]$.\ Thus if
$u_i$ has $L[0]$-weight $k_i$ then
\begin{align*}
 L[0]L[-1]u_i&= [L[0], L[-1]]u_i +L[-1]L[0]u_i \\
 &= L[-1]u_i+k_iL[-1]u_i\\
 &=(1+k_i)L[-1]u_i.
\end{align*}
So each $L[-1]u_i$ has $L[0]$-conformal weight $1+k_i$, and this immediately implies that
$L[-1]u$ has $X$-weight $1+k$.

It remains to show that $L[-1]u\neq0$.\ This follows from Lemma \ref{lemkerL[-1]} below, thereby completing the proof of the Theorem.
\end{proof}

\begin{rmk} 
We point out that the states $L[-1]u$ above of odd $X$-weight always satisfy $\hat{f}(L[-1]u)=0$ so that we do not get additional $p$-adic modular forms by this route. The reason for this is that it is well-known that for $u_i\in S_{\alg}$ we always have $f(L[-1]u_i)=0$. Hence this vanishing of characters remains true after taking limits, by continuity of the character map.
\end{rmk}

\begin{lem}\label{lemkerL[-1]} If $u\in S$ satisfies $L[-1]u=0$
then $u$ is a scalar multiple of $\mathbf{1}$.
\end{lem}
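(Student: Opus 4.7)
The plan is to exploit the conformal grading on $S_{\alg}$ together with the identity $L[-1]=L(0)+L(-1)$ from \eqref{-1mode}, reducing the hypothesis to the standard VOA axiom $L(-1)\mathbf{1}=0$.

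First I would observe that every $u=\sum_I a_I h^I\in S$ admits a canonical convergent decomposition
\[
u=\sum_{n\geq 0} u_n,\qquad u_n\df\sum_{\abs{I}=n} a_I h^I\in (S_{\alg})_n,
\]
where each $u_n$ is a finite sum in conformal weight $n$ and $\abs{u_n}_1=\sup_{\abs{I}=n}\abs{a_I}$ tends to zero because $u\in S$. Both $L(0)$ and $L(-1)$ extend continuously to $S$: the extension of $L(0)$ acts termwise as $L(0)u=\sum_n n u_n$ and is bounded since $\abs{n}\leq 1$ for $n\in\ZZ$, while continuity of $L(-1)$ on $S$ has already been noted in the proof of Theorem \ref{thmL0eigen}. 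Crucially, $L(-1)$ raises conformal weight by one, so $L(-1)u_n\in(S_{\alg})_{n+1}$.

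Substituting into $L[-1]u=L(0)u+L(-1)u=0$ and sorting by conformal weight yields
\[
\sum_{n\geq 0}\bigl(n u_n+L(-1)u_{n-1}\bigr)=0,
\]
where $u_{-1}\df 0$. Uniqueness of the decomposition into conformal components then forces the recursion
\[
n u_n=-L(-1)u_{n-1}\quad (n\geq 1),
\]
with no constraint placed on $u_0$. Unwinding the recursion gives $u_n=\tfrac{(-1)^n}{n!}L(-1)^n u_0$ for all $n\geq 0$.

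Finally $u_0\in (S_{\alg})_0=\QQ_p\mathbf{1}$, so $u_0=c\mathbf{1}$ for some $c\in\QQ_p$. The VOA axiom $L(-1)\mathbf{1}=0$ then forces $L(-1)^n u_0=0$ for every $n\geq 1$, so $u_n=0$ for all $n\geq 1$ and $u=c\mathbf{1}$, as claimed. The only point requiring care is justifying the uniqueness of the conformal weight decomposition in $S$ and the continuity of $L(0)$ and $L(-1)$, but both follow immediately from the explicit Tate-algebra description of $S$ in the basis $\{h^I\}$ together with the elementary bound $\abs{n}\leq 1$ for $n\in\ZZ$.
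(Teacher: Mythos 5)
Your proof is correct and takes essentially the same route as the paper: decompose $u$ into its conformal-weight components $u_n\in(S_{\alg})_n$, apply $L[-1]=L(0)+L(-1)$, and compare weight by weight using the fact that $L(-1)$ raises conformal weight and kills $\mathbf{1}$. The paper concludes via a minimal-counterexample contradiction where you solve the recursion $nu_n=-L(-1)u_{n-1}$ explicitly, but the substance is identical.
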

\begin{proof}
As explained in the proof of \cite[Proposition 7.3]{FMpadic}, we may write
\[ u = \sum_{i\geq 0} u_i   
\]
where each $u_i$ is a state in $(S_{alg})_i$
and $u_i\rightarrow 0$. We calculate
\[
   0=  L[-1]u = \sum_i L[-1]u_i = \sum_i (L(-1)+L(0))u_i
  =\sum_i (L(-1)u_i+iu_i).
\]

If the Lemma is false then there is a \emph{least positive} integer $j$ such that $u_j\neq0$.\ We will derive a contradiction.\ Since $L(-1): (S_{\alg})_i\rightarrow (S_{\alg})_{i+1}$ it follows from the previous display that $ju_j$ is equal to a linear combination of states $u_i$ of weight \emph{greater} that $j$.\ Since the $u_i$ are linearly independent and $j\neq0$, this can only happen if $u_j=0$, and this is the desired contradiction.
\end{proof}


\section{The character map in weight zero}
\label{s:weightzero}

Recall from the arithmetic theory of elliptic curves that there are finitely many supersingular $j$-invariants for each prime $p$, and they are all contained in $\FF_{p^2}$. When $p=2,3,5$ we have that $j = 0$ is the only supersingular $j$-invariant and it will thus suffice below to work over $\QQ_p$ for such primes. More generally, since some supersingular $j$-invariants may be defined over $\FF_{p^2}$ rather than over $\FF_p$ for arbitrary $p$, in general one needs to work over the quadratic unramified extension of $\QQ_p$. The theory of \cite{FMpadic} extends to this setting without change. 

The following example is discussed at the bottom of page 202 of \cite{Serre}. 
\begin{prop}
  \label{p:wt0}
  When $p=2,3,5$, we have
  \[M_{p,0} = \QQ_p\langle j^{-1}\rangle.\]
\end{prop}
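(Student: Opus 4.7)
The plan is to establish both containments separately. For the inclusion $\QQ_p\langle j^{-1}\rangle \subseteq M_{p,0}$, the key observation is that for $p\in\{2,3,5\}$ the constant $240$ is divisible by $p$, so $Q = 1 + 240\sum_{n\geq 1}\sigma_3(n)q^n \equiv 1 \pmod p$ in $M_p$. Thus $Q$ is a $1$-unit of the Banach algebra $M_p$, its inverse $Q^{-1} = \sum_{k\geq 0}(1-Q)^k$ lies in $M_{p,-4}$, and consequently $j^{-1} = \Delta/Q^3 \in M_{p,0}$. Since $j^{-1} = q + O(q^2)$ has integral $q$-expansion, each power $j^{-n}$ has supremum norm $1$, so any series $\sum_{n\geq 0} a_n j^{-n}$ with $a_n\to 0$ in $\QQ_p$ converges in $M_{p,0}$.

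For the reverse inclusion $M_{p,0}\subseteq \QQ_p\langle j^{-1}\rangle$, I would approximate a given $f\in M_{p,0}$ by classical forms via Serre's approximation theorem \cite{Serre2}, writing $f = \lim f_n$ where each $f_n$ is a classical level-one form of weight $k_n$ with $k_n\to 0$ in $X$. By choosing representatives $k_n$ in the congruence class $0\pmod{\mathrm{lcm}(4,p-1)}$---which equals $4$ for each of our three primes---one may assume $4\mid k_n$. Then $g_n \df f_n/Q^{k_n/4}$ is a classical weight-zero modular function on $\SL_2(\ZZ)\backslash\mathcal{H}$, and its only possible poles lie at zeros of $Q$; the valence formula shows that $Q = 240 E_4$ vanishes only at the elliptic point corresponding to $j=0$, while $Q|_{\infty}=1\neq 0$ so $g_n$ is holomorphic at the cusp. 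A rational function in $j$ holomorphic at $\infty$ with poles only at $j=0$ is a polynomial in $j^{-1}$, so $g_n\in\QQ_p[j^{-1}]$. Continuity of $p$-adic exponentiation on the $1$-unit $Q$ then yields $Q^{k_n/4}\to 1$ in $M_p$ as $k_n\to 0$ in $\ZZ_p$, whence $f_n - g_n\to 0$ and $f$ lies in the closure of $\QQ_p[j^{-1}]$ inside $M_p$.

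To finish, I would identify this closure with $\QQ_p\langle j^{-1}\rangle$ by observing that the triangular substitution expressing $\sum a_n j^{-n}$ as a $q$-series has $1$'s on the diagonal and integral off-diagonal entries, so the supremum norm on $j^{-1}$-coefficients coincides with the supremum norm on $q$-coefficients and $\QQ_p\langle j^{-1}\rangle$ embeds isometrically into $M_p$. The main obstacle is making rigorous the assertion $Q^{k_n/4}\to 1$ in $M_p$: this requires defining $p$-adic powers $Q^s$ for $s\in\ZZ_p$ via the logarithm and exponential on the $1$-units of $M_p$, verifying continuity in the exponent, and matching this extension with the ordinary integer powers appearing for classical $k_n$---a standard but essential technicality of the argument.
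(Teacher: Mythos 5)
The paper offers no proof of this proposition: it is quoted directly from the bottom of page 202 of Serre's article, so there is no internal argument to compare yours against. That said, your proof is essentially the standard one (and, as far as I can tell, essentially Serre's): the inclusion $\QQ_p\langle j^{-1}\rangle\subseteq M_{p,0}$ via $Q\equiv 1\pmod p$, the isometry between the $j^{-1}$-coefficient and $q$-coefficient supremum norms coming from the unitriangular integral substitution $j^{-n}=q^n+O(q^{n+1})$, and the identification of $f_n/Q^{k_n/4}$ as a polynomial in $j^{-1}$ are all sound. For the last step a purely algebraic variant avoids any appeal to the valence formula: when $4\mid k_n$, every monomial $Q^aR^b$ of weight $k_n$ has $b$ even, so the space of classical weight-$k_n$ forms is spanned by the $Q^a\Delta^c$ with $4a+12c=k_n$, and $Q^a\Delta^c/Q^{k_n/4}=j^{-c}$. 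Likewise the technicality you flag at the end is lighter than you suggest: the exponents $k_n/4$ are honest integers tending to $0$ in $\ZZ_p$, and for a $1$-unit $u$ of an ultrametric Banach algebra one has $u^m\to 1$ as $m\to 0$ through $\ZZ$; no $p$-adic exponentiation $Q^s$ for $s\in\ZZ_p$ is needed (the paper itself uses $Q^{-1}=\lim_m Q^{p^m-1}$ in exactly this spirit right after the proposition).

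The one genuine gap is the reduction to $4\mid k_n$. Convergence of $k_n$ to $0$ in $X=\ZZ_p\times\ZZ/(p-1)\ZZ$ controls $k_n$ modulo $p-1$ and $p$-adically, but not modulo $4$. This is harmless for $p=2$ (where $\nu(k_n)\to\infty$ forces $4\mid k_n$ eventually) and for $p=5$ (where $p-1=4$), but it fails for $p=3$: the weights $k_n=6\cdot 3^n$, realized by the classical forms $R^{3^n}\to 1$, tend to $0$ in $X$ while satisfying $k_n\equiv 2\pmod 4$ for every $n$, and then $Q^{k_n/4}$ is not defined. ``Choosing representatives in the class $0\pmod{\mathrm{lcm}(4,p-1)}$'' is not an option Serre's approximation theorem gives you, since the $k_n$ are actual integer weights of actual classical forms, not residue classes. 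The fix is short and should be stated: since $504=2^3\cdot 3^2\cdot 7$, we have $R\equiv 1\pmod 9$, so replacing $f_n$ by $f_nR^{3^{N_n}}$ for $N_n$ large perturbs the approximating sequence by terms tending to $0$, keeps the weights converging to $0$ in $X$, and shifts each weight by $6\cdot 3^{N_n}\equiv 2\pmod 4$, which lands you back in the case $4\mid k_n$. With that repair the argument is complete.
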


In order to utilize Proposition \ref{p:wt0}, we now restrict to $p=2,3,5$, and eventually we will in fact simply take $p=2$ for simplicity.

First we recall how to view $j^{-n}$ as a $p$-adic modular form of level $1$ and weight $0$.\ Bearing in mind our notation for Eisenstein series (cf.\  Subsection \ref{SSEis}), for $p=2,3,5$ we visibly have $Q\equiv 1 \pmod{p}$. Thus,
\[
Q^{-1} = \lim_{m\to \infty} Q^{p^m-1}.
\]
It follows that
\[
 j^{-n} = \Delta^n Q^{-3n} = \lim_{m\to \infty}\Delta^nQ^{3n(p^m-1)}.
\]
Suppose that we can find states
\[
  J_{n,m} \in S_{\alg}
\]
such that the following properties hold:
\begin{enumerate}
    \item $f(J_{n,m}) = \Delta^n Q^{3n(p^m-1)}$;
    \item $J_n \df \lim_{m \to \infty} J_{n,m}$ exists for each $n$;
    \item there exists a bound $B$ such that $\abs{J_{n}} \leq B$ for all $n$.
\end{enumerate}
Assuming that these properties hold,  by continuity of the $p$-adic character map we will have
\[
\hat{f}(J_n) = \hat{f}(\lim_{m} J_{n,m}) = \lim_m f(J_{n,m}) = \lim_m\left(\Delta^n Q^{3n(p^m-1)}\right) = j^{-n}.
\]
It will then follow that if $\sum_{n\geq 0} b_nj^{-n} \in M_{p,0}$ for a sequence of scalars $b_n$ converging to $0$, then the state $\sum_{n} b_n J_n$ exists in $S$ and $\hat{f}(\sum b_n J_n) = \sum b_nj^{-n}$. Therefore, if we can find states $J_{n,m} \in S_{\alg}$ with properties (1)--(3) above, then we will have established surjectivity of the map $\hat{f}$ in weight zero. 

As it is, we will not quite achieve this goal. Instead we will  establish an estimate slightly weaker than (3). This will suffice to obtain many new $2$-adic modular forms in the image of $\hat{f}$, though not all. A precise statement is given below as Corollary \ref{c:main2adic} and reiterated in Theorem \ref{t:main1}(2) of the introduction.

\subsection{Some special states in $S_{\alg}$}
In the following $u_{m, n}$ refers to the states introduced in Lemma \ref{lemumnstate}.
\
\begin{lem}
  For nonnegative integers $a$, $b$ introduce the square bracket state
\begin{align*}
 U_{ab} \df& (-1)^b (588)^a(120)^b \  \times\\
&\sum_{i=0}^a
 \binom{a}{i}\frac{1}{(6i+2b-1)!!(4a-4i-1)!!}
 \left(\frac{250}{147}  \right)^i
 h[-2]^{6i+2b}h[-3]^{4a-4i}\mathbf{1}.
\end{align*}
Then
\[
f(U_{ab}) = \Delta^a Q^b.
\]
\end{lem}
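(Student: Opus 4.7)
The plan is to work backwards from the target $\Delta^a Q^b$. Using the identity $1728\Delta = Q^3-R^2$ recalled in Subsection \ref{SSEis}, apply the binomial theorem to write
\[
\Delta^a Q^b = \frac{1}{1728^a}\sum_{i=0}^a \binom{a}{i}(-1)^{a-i} Q^{3i+b} R^{2(a-i)}.
\]
Each term $Q^m R^n$ appearing in this expansion is, by Lemma \ref{lemumnstate}, the image under $f$ of an explicit square-bracket state $u_{m,n}$ (taking $m=3i+b$ and $n=2(a-i)$, both permissible since $m,n\ge 0$). By linearity of the character map, the state
\[
W_{ab} \df \frac{1}{1728^a}\sum_{i=0}^a \binom{a}{i}(-1)^{a-i}\, u_{3i+b,\,2(a-i)}
\]
automatically satisfies $f(W_{ab})=\Delta^a Q^b$. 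It therefore suffices to recognize $W_{ab}$ as the explicit expression given for $U_{ab}$.

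To do that, substitute the formula for $u_{m,n}$ from Lemma \ref{lemumnstate} and regroup constants. The exponents of $h[-2]$ and $h[-3]$ come out to $2m=6i+2b$ and $2n=4(a-i)=4a-4i$, matching the statement. The double-factorial denominators are exactly $(6i+2b-1)!!(4(a-i)-1)!!$. The signs combine via $(-1)^{a-i}\cdot(-1)^{m+n}=(-1)^{a-i}(-1)^{i+b}=(-1)^{a+b}$ (this is where one should double-check against the stated $(-1)^b$, since a factor of $(-1)^a$ can easily be absorbed into an equivalent reformulation). Finally, the numerical constants collapse after the identities
\[
\frac{120^3}{1728}=1000, \qquad \frac{1008^2}{1728}=588, \qquad \frac{1000}{588}=\frac{250}{147},
\]
which yield
\[
\frac{120^{3i+b}\,1008^{2(a-i)}}{1728^a} \;=\; 120^b\cdot 588^a\cdot\left(\frac{250}{147}\right)^{\!i}.
\]

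The main obstacle is nothing conceptual, only the bookkeeping of signs and the recognition that the constants of Lemma \ref{lemumnstate} telescope into the clean constants $588^a$ and $120^b$ together with the factor $(250/147)^i$. One should simply carry out the substitution carefully, term by term, and match with the displayed expression for $U_{ab}$.
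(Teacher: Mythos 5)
Your proof is correct and follows essentially the same route as the paper's: expand $\Delta^aQ^b = 1728^{-a}(Q^3-R^2)^aQ^b$ by the binomial theorem, apply Lemma \ref{lemumnstate} termwise to each monomial $Q^{3i+b}R^{2(a-i)}$, and collapse the constants via $120^3/1728=1000$, $1008^2/1728=588$, $1000/588=250/147$. Your sign bookkeeping is in fact more careful than the paper's: the coefficient of $Q^{3i}R^{2(a-i)}$ in $(Q^3-R^2)^a$ is $\binom{a}{i}(-1)^{a-i}$, whereas the paper's displayed computation uses $(-1)^i$, so the total sign is $(-1)^{a-i}(-1)^{(3i+b)+(2a-2i)}=(-1)^{a+b}$ as you found, rather than the stated $(-1)^b$ --- i.e.\ as written the lemma gives $f(U_{ab})=(-1)^a\Delta^aQ^b$; this $(-1)^a$ is harmless for the later applications (only the linear span of the resulting characters matters there), but the discrepancy you flagged is a genuine typo in the paper, not something that can be ``absorbed.''
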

\begin{proof}
Using Lemma \ref{lemumnstate} we obtain
\begin{align*}
& \Delta^a Q^b = \left( \frac{1}{12} \right)^{3a}
(Q^3-R^2)^a Q^b \\
=& 12^{-3a}\sum_{i=0}^a
(-1)^i \binom{a}{i} Q^{3i+b}R^{2a-2i}\\
=& 12^{-3a}\sum_{i=0}^a
(-1)^i \binom{a}{i}
Z(u_{(3i+b),(2a-2i})\\
=&12^{-3a}\sum_{i=0}^a
\binom{a}{i}\left\{(-1)^{b}\frac{(120)^{3i+b}(1008)^{2(a-i)}}{(6i+2b-1)!! (4(a-i)-1)!!} Z(h[-2]^{6i+2b}h[-3]^{4(a-i)}\mathbf{1})\right\}\\
=&(-1)^b (588)^a(120)^b \eta\ \  \times\\
&\sum_{i=0}^a
 \binom{a}{i}\frac{1}{(6i+2b-1)!!(4a-4i-1)!!}
 \left(\frac{250}{147}  \right)^i
 Z(h[-2]^{6i+2b}h[-3]^{4a-4i}\mathbf{1}).
\end{align*}
\end{proof}

This computation suggests that we set
\[
J_{n,m} \df u_{n,3n(p^m-1)}
\]
so that $a=n$ and $b=3n(p^m-1)$. Thus,
\begin{align}
  \label{eq:Jnm}
&J_{n,m} =(-1)^{n(p+1)}(588 \cdot 120^{3(p^m-1)})^n \nonumber \\ &\sum_{i=0}^n \frac{\binom{n}{i} 250^i}{(147)^i \cdot (6(i + np^m-n)-1)!! \cdot (4(n-i)-1)!!} h[-2]^{6(i+n(p^m-1))}h[-3]^{4(n-i)}\bone.
\end{align}
As discussed previously, if we can show that the limit
\[
J_n \df \lim_{m \to \infty} J_{n,m}
\]
exists, and that the series $J_n$ are bounded with $n$, then we will obtain new $p$-adic modular forms of weight zero in the image of the character map. The most complicated term in the definition of $J_{n,m}$ that involves the limit variable $m$ is the power $h[-2]^{6i+6n(p^m-1)}$. Therefore, in the next section we give a detailed study of these square-bracket states and their $p$-adic properties.

\section{Powers of $h[-2]$}\label{Sp=2I}
\label{s:hminus2}
Recall that the modes $h[n]$ are  defined in \eqref{defsqbk} via the formal series
\[
  Y[h,z] \df \sum_{n\in\ZZ} h[n]z^{-n-1} = e^zY(h,e^z-1)
\]
where we have taken $k=1$ because $h\in (S_{\alg})_1$.\ Of course, by definition of the algebraic Heisenberg algebra, $Y(h,z) = \sum_{n\in \ZZ} h(n)z^{-n-1}$.\ Therefore,
\begin{align*}
  h[-2] &= \Res_z(z^{-2} e^zY(h,e^z-1))\\
         &=\Res_z\left(z^{-2} e^z\sum_{n\in\ZZ}h(n)(e^z-1)^{-n-1}\right)\\
         &=\Res_z\left(z^{-2} e^z\sum_{n\geq -2}h(n)(e^z-1)^{-n-1}\right)\\
  &=\sum_{n\geq -2}h(n)\Res_z\left(\frac{e^z}{z^2(e^z-1)^{n+1}}\right)
\end{align*}
Recalling the definition of generalized Bernoulli polynomials from Subsection \ref{SSspecnos}, we have
\[
\frac{e^z}{z^2(e^z-1)^{n+1}}=z^{-n-3}e^z\left(\frac{z}{e^z-1}\right)^{n+1}=\sum_{m\geq 0} B_{m}^{(n+1)}(1)\frac{z^{m-n-3}}{m!}
\]
The residue arises when $m=n+2$. This shows that
\begin{equation}
  \label{eq:hminus2}
  h[-2] = \sum_{n\geq -2}\frac{B^{(n+1)}_{n+2}(1)}{(n+2)!}h(n).
\end{equation}
More generally for $r \geq 2$:
\begin{equation}
  \label{eq:hminusr}
  h[-r] = \sum_{n\geq -r}\frac{B^{(n+1)}_{n+r}(1)}{(n+r)!}h(n).
\end{equation}

\begin{rmk}
In principle these generalized Bernoulli numbers can have denominators that are divisible by $p$. Computations suggest that the Clausen-von-Staudt theorem generalizes as follows:
\[
  p^{\floor{\log_2(m+1)}} B_n^{(m)}(1) \in \ZZ_p.
\]
We have proved the following slightly weaker form of this:
\[
  p^{\floor{\log_p(mn+1)}}B_n^{(m)}(1) \in \ZZ_p.
\]
We will not need such estimates below.
\end{rmk}

Now, we want to take powers of these square-bracket states. We notice that $h(n)$ for $n\geq 3$ all commute with each other, and they commute with $h(-2)$, $h(-1)$, $h(0)$, $h(1)$ and $h(2)$. Therefore, let us write:
\begin{align*}
  A &=\sum_{n= -2}^3\frac{B^{(n+1)}_{n+2}(1)}{(n+2)!}h(n),\\
  B &= \sum_{n\geq 4}\frac{B^{(n+1)}_{n+2}(1)}{(n+2)!}h(n).
\end{align*}
Then $A$ and $B$ commute, so that we have
\[
  h[-2]^u = (A+B)^u = \sum_{r=0}^u \binom{u}{r}A^rB^{u-r}
\]
Eventually we need to consider $h[-2]^uh[-3]^v\bone$. This will be an element in the algebraic Heisenberg with a messy description. To evaluate this, let us also write
\begin{align*}
  C \df\sum_{n= -3}^3\frac{B^{(n+1)}_{n+3}(1)}{(n+3)!}h(n),\  \
  D \df \sum_{n\geq 4}\frac{B^{(n+1)}_{n+3}(1)}{(n+3)!}h(n).
\end{align*}
Then we have $[C,B]=0$, $[C,D]=0$, $[D,B]=0$, $[D,A]=0$ and hence
\begin{align*}
  h[-2]^uh[-3]^v &= \sum_{r=0}^u\sum_{t=0}^v\binom{u}{r}\binom{v}{t} A^rB^{u-r}D^{v-t}C^t
  &=\sum_{r=0}^u\sum_{t=0}^v\binom{u}{r}\binom{v}{t} A^rC^tB^{u-r}D^{v-t}
\end{align*}
Since the operators $B$ and $D$ annihilate the vacuum vector $\bone$, it follows that
\begin{equation}
\label{eq:mon1}
  h[-2]^uh[-3]^v\bone = A^uC^v\bone.
\end{equation}

In order to evaluate this, let us observe that:
\begin{align*}
  A &= h(-2) + h(-1) + \tfrac{1}{12}h(0)-\tfrac{1}{240}h(2)+\tfrac{1}{240}h(3),\\
  C&= h(-3)+\tfrac 32 h(-2)+ \tfrac 12 h(-1)+\tfrac{1}{240}h(1)-\tfrac{1}{480}h(2)+\tfrac{1}{945}h(3)
\end{align*}
As operators for $r \geq 1$ we have $h(r) = r\tfrac{\partial}{\partial h(-r)}$ and $h(0) = 0$ so that this simplifies to
\begin{align*}
  A &= h(-2) + h(-1)-\tfrac{1}{120}\tfrac{\partial}{\partial h(-2)}+\tfrac{1}{80}\tfrac{\partial}{\partial h(-3)},\\
  C&= h(-3)+\tfrac 32 h(-2)+ \tfrac 12 h(-1)+\tfrac{1}{240}\tfrac{\partial}{\partial h(-1)}-\tfrac{1}{240}\tfrac{\partial}{\partial h(-2)}+\tfrac{1}{315}\tfrac{\partial}{\partial(-3)}.
\end{align*}

If we combine equations \eqref{eq:Jnm} and \eqref{eq:mon1} we obtain the expression:
\begin{align}
  \label{eq:Jnm2}
J_{n,m} =&(-1)^{n(p+1)}(588 \cdot 120^{3(p^m-1)})^n \nonumber\\ &\sum_{i=0}^n\binom{n}{i} \frac{250^i}{(147)^i \cdot (6n(p^m-1)+6i-1)!! \cdot (4n-4i-1)!!} A^{6i+6n(p^m-1)}C^{4n-4i}\bone.
\end{align}

To begin our analysis, notice the following:
\begin{lem}
  The partial differential operators $A$ and $C$ commute.
\end{lem}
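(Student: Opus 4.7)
The cleanest approach is to exploit the square-bracket VOA structure rather than grinding through all canonical commutators by hand. My plan is as follows.

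First, I would recall that the square-bracket VOA $(S_{\alg}, Y[\,\cdot\,], \mathbf{1}, \tilde\omega)$ is itself a rank-one Heisenberg VOA with canonical weight-one generator $h$, as recalled in Subsection \ref{SSsqVOA}. Consequently its modes satisfy the canonical commutation relations $[h[m], h[n]] = m\delta_{m+n,0}\,\id$ for all integers $m,n$. In particular,
\[
  [h[-2], h[-3]] = 0,
\]
since $-2 + (-3) \neq 0$.

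Second, I would use the decompositions $h[-2] = A + B$ and $h[-3] = C + D$ introduced just above the lemma, together with the commutation statements already justified in the text, namely $[A,D] = [B,C] = [B,D] = 0$ (these follow because $B$ and $D$ involve only positive Heisenberg modes $h(n)$ with $n \geq 4$, which mutually commute and which commute with all of the modes appearing in $A$ and $C$, using the canonical relations $[h(m), h(n)] = m\delta_{m+n,0}$). Bilinearity of the commutator then gives
\[
  0 \;=\; [h[-2], h[-3]] \;=\; [A+B, C+D] \;=\; [A,C] + [A,D] + [B,C] + [B,D] \;=\; [A,C],
\]
which is the desired conclusion.

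The main (minor) obstacle is verifying that $[A,D] = [B,C] = [B,D] = 0$ is indeed rigorous and not a notational sleight of hand, since $A$ and $C$ both contain the ``boundary'' modes $h(2)$ and $h(3)$, which in principle could interact with $B$ or $D$. One checks directly that $h(n)$ and $h(m)$ commute whenever $m+n\neq 0$, so the only possible non-zero commutators among the Heisenberg modes present in $A,B,C,D$ would need a pair $(m,-m)$; inspection of the index ranges ($n \in \{-2,\dots,3\}$ for $A$, $n \in \{-3,\dots,3\}$ for $C$, $n \geq 4$ for $B$ and $D$) shows no such pair exists across the three required cross-commutators. As a sanity check, if the conceptual proof were for some reason unsatisfactory, one could instead compute $[A,C]$ directly term by term using $[h(m), h(n)] = m\delta_{m+n,0}$; the calculation is finite (involving only modes with $|n|\leq 3$) and the several nonzero contributions cancel pairwise once the coefficients in the definitions of $A$ and $C$ are plugged in, confirming the answer.
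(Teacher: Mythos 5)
Your proof is correct and follows essentially the same route as the paper: the authors likewise deduce $[h[-2],h[-3]]=0$ from the isomorphism between the square-bracket and round-bracket Heisenberg structures and then note that the Lemma follows via the decomposition $h[-2]=A+B$, $h[-3]=C+D$. Your write-up merely makes explicit the final bilinearity step and the index-range check for $[A,D]=[B,C]=[B,D]=0$, which the paper leaves to the reader.
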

\begin{proof}
This can be checked directly, say on a computer, or it can be deduced from the fact, discussed in Subsection \ref{SSsqVOA}, that the algebraic VOA structures on the Heisenberg algebra using either the square or round bracket modes are isomorphic.\ Since $h(-2)$ and $h(-3)$ commute by definition, this  result  implies that $h[-2]$ and $h[-3]$ also commute, and the Lemma is a consequence of this.
\end{proof}

The preceding Lemma implies that while studying the limit over $m$ of equation \eqref{eq:Jnm2}, we may apply powers of $A$ before applying powers of $C$.\ To this end, let us define a recursive sequence of polynomials $p_n$ by setting $p_0 = \bone$ and $p_n = Ap_{n-1}$ for $n\geq 1$.\ For simplicity let us now write $a = h(-1)$ and $b = h(-2)$.\ Then with this notation, $A$ acts as the operator $a+b-\tfrac{1}{120}\tfrac{d}{da}$.
\begin{prop}
  \label{p:pk}
  We have for $k\geq 0$:
  \begin{align*}
    p_{2k} =&\sum_{i=0}^k(2(k-i)-1)!!(-120)^{i-k}\binom{2k}{2(k-i)}(a+b)^{2i},\\
    p_{2k+1} =&\sum_{i=0}^k(2(k-i)-1)!!(-120)^{i-k}\binom{2k+1}{2(k-i)}(a+b)^{2i+1}.
  \end{align*}
\end{prop}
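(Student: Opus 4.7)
The plan is first to reduce from two variables to one and then induct. Since $\tfrac{d}{da}$ annihilates $b$ and sends $(a+b)^m$ to $m(a+b)^{m-1}$, its action on any polynomial in $c \df a+b$ alone agrees with $\tfrac{d}{dc}$. As $\bone$ is such a polynomial and this property is preserved by $A$, we may work throughout with the single-variable operator $T \df c - \tfrac{1}{120}\tfrac{d}{dc}$ and identify $p_n = T^n \cdot 1$. (In passing, $T$ is a rescaled version of the operator generating the probabilists' Hermite polynomials, which accounts for the appearance of double factorials below.) The two formulas in the proposition can then be packaged into the single statement
\[
p_n \;=\; \sum_{j=0}^{\lfloor n/2\rfloor} \binom{n}{2j}(2j-1)!!\left(\tfrac{-1}{120}\right)^{\!j} c^{\,n-2j},
\]
from which the claimed expressions for $p_{2k}$ and $p_{2k+1}$ follow after reindexing with $i = \lfloor n/2\rfloor - j$.

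I would prove this unified formula by induction on $n$, with the base cases $n=0,1$ being immediate under the convention $(-1)!!=1$. For the inductive step, apply $T$ to the postulated form of $p_n$: multiplication by $c$ shifts each term $c^{n-2j}$ to $c^{n+1-2j}$, while $-\tfrac{1}{120}\tfrac{d}{dc}$ shifts exponents down by one and, after the index shift $j \mapsto j-1$, contributes to the $c^{n+1-2j}$ coefficient. Matching this coefficient with that of $p_{n+1}$ reduces the problem to the identity
\[
\binom{n}{2j}(2j-1)!! + \binom{n}{2j-2}(2j-3)!!\,(n-2j+2) \;=\; \binom{n+1}{2j}(2j-1)!!.
\]
Dividing by $(2j-3)!!$ and invoking Pascal's rule on the right, this collapses to $\binom{n}{2j-2}(n-2j+2) = \binom{n}{2j-1}(2j-1)$, which is immediate from the factorial definitions.

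There is no genuine obstacle here: the essential content is the one-variable reduction, after which everything is coefficient bookkeeping. The only point requiring mild care is the boundary term $c^{n+1}$ (the $j=0$ case), which is produced solely by the multiplication-by-$c$ contribution and hence automatically matches $\binom{n+1}{0}(-1)!! = 1$.
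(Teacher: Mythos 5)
Your proof is correct and follows essentially the same route as the paper's: induction on $n$, with the inductive step reducing to the very same double-factorial/binomial identity (the paper's version is your identity divided by $(2j-3)!!$, verified there by direct factorial manipulation rather than Pascal's rule). The one-variable reduction to $T = c - \tfrac{1}{120}\tfrac{d}{dc}$ and the unified formula for $p_n$ are a mild streamlining of the paper's separate treatment of $p_{2k}$ and $p_{2k+1}$, but the substance is identical.
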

\begin{proof}
  The proof is by induction. First notice that
  \begin{align*}
    Ap_{2k} =& \sum_{i=0}^k(2(k-i)-1)!!(-120)^{i-k}\binom{2k}{2(k-i)}(a+b)^{2i+1} + \\
    &\sum_{i=1}^k(2(k-i)-1)!!(-120)^{i-k-1}(2i)\binom{2k}{2(k-i)}(a+b)^{2i-1}\\
=& \sum_{i=0}^k(2(k-i)-1)!!(-120)^{i-k}\binom{2k}{2(k-i)}(a+b)^{2i+1} + \\
             &\sum_{i=0}^{k-1}(2(k-i)-3)!!(-120)^{i-k}(2i+2)\binom{2k}{2(k-i-1)}(a+b)^{2i+1}\\
    =&(a+b)^{2k+1}+ \sum_{i=0}^k(2(k-i)-3)!!(-120)^{i-k}\cdot\\
    &\left\{(2(k-i)-1)\binom{2k}{2(k-i)}+(2i+2)\binom{2k}{2(k-i-1)}\right\}(a+b)^{2i+1}
  \end{align*}
  Since
  \begin{align*}
    &(2(k-i)-1)\binom{2k}{2(k-i)}+(2i+2)\binom{2k}{2(k-i-1)}\\
    =&(2(k-i)-1)\frac{(2k)!}{(2(k-i))!(2i)!}+(2i+2)\frac{(2k)!}{(2(k-i-1))!(2(i+1))!}\\
    =&(2(k-i)-1)\left\{\frac{(2k)!}{(2(k-i))!(2i)!}+\frac{(2k)!}{(2k-2i-1)!(2i+1)!}\right\}\\
    =& (2(k-i)-1)\left\{\binom{2k}{2i}+\binom{2k}{2i+1}\right\}\\
    =&(2(k-i)-1)\binom{2k+1}{2(k-i)}
  \end{align*}
  Hence we indeed have $Ap_{2k} = p_{2k+1}$ by induction.\ The proof for $Ap_{2k+1}= p_{2k}$ is analogous.
\end{proof}

\begin{cor}
  \label{c:Apowers}
  For all $n,m \geq 1$ we have
  \begin{align*}
    & (-1)^{n(p+1)}120^{3n(p^m-1)}A^{6n(p^m-1)}\bone\\
   =& \sum_{j=0}^{3n(p^m-1)}(6n(p^m-1)-2j-1)!!(-120)^{j}\binom{6n(p^m-1)}{2j}(h(-1)+h(-2))^{2j}
  \end{align*}
\end{cor}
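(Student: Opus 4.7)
This is a direct consequence of Proposition \ref{p:pk}, combined with careful reindexing and sign bookkeeping. The first step is to observe that, by definition of the recursive sequence, $A^k\bone = p_k$ for every $k \geq 0$. Setting $M \df 3n(p^m-1)$, the exponent in the statement is $6n(p^m-1) = 2M$, so we need only compute $p_{2M}$ and multiply by the correct scalar.

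Applying the even case of Proposition \ref{p:pk} gives
\[
p_{2M} = \sum_{i=0}^M (2(M-i)-1)!!\,(-120)^{i-M}\binom{2M}{2(M-i)}(a+b)^{2i}.
\]
I would then multiply both sides by $120^M$ and simplify the scalar factor. Writing $(-120)^{i-M}\cdot 120^M = (-1)^{i-M}\cdot 120^i = (-1)^M (-120)^i$, and using the binomial symmetry $\binom{2M}{2(M-i)} = \binom{2M}{2i}$, the identity becomes
\[
120^M p_{2M} = (-1)^M \sum_{i=0}^M (2M-2i-1)!!\,(-120)^i\binom{2M}{2i}(a+b)^{2i}.
\]
After renaming the dummy index $i \to j$ and recalling $a+b = h(-1)+h(-2)$, the right-hand side is precisely the sum that appears in the Corollary.

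It remains to reconcile the sign prefactor $(-1)^M$ with the prefactor $(-1)^{n(p+1)}$ stated in the Corollary, i.e., to check that $(-1)^{3n(p^m-1)} = (-1)^{n(p+1)}$. A quick case analysis suffices: when $p$ is odd, both $p+1$ and $p^m-1$ are even, so both signs equal $+1$; when $p=2$, we have $(-1)^{n(p+1)} = (-1)^{3n} = (-1)^n$, while $(-1)^{3n(2^m-1)} = (-1)^{3n} = (-1)^n$ since $2^m-1$ is odd. The only obstacle in this proof is this careful sign bookkeeping, since everything else is purely formal once Proposition \ref{p:pk} is in hand.
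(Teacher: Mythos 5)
Your proof is correct and follows the same route as the paper, which simply observes that the Corollary follows from Proposition \ref{p:pk} upon setting $2k = 6n(p^m-1)$. You have merely made explicit the reindexing $\binom{2M}{2(M-i)}=\binom{2M}{2i}$, the scalar simplification $(-120)^{i-M}120^{M}=(-1)^{M}(-120)^{i}$, and the parity check $(-1)^{3n(p^m-1)}=(-1)^{n(p+1)}$ that the paper leaves to the reader; all of these are verified correctly.
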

\begin{proof}
This follows immediately from Proposition \ref{p:pk} by setting $2k=6n(p^m-1)$.
\end{proof}

Let us summarize these computations in the following result:
\begin{thm}
  \label{t:Jnm1}
  For all $n,m \geq 1$ we have
  \begin{align*}
    J_{n,m}= & \sum_{i=0}^n\binom{n}{i} \frac{ 2^{2n+i}\cdot 3^{n-i}\cdot 5^{3i}\cdot 7^{2(n-i)}}{(6n(p^m-1)+6i-1)!! \cdot (4n-4i-1)!!} A^{6i}C^{4n-4i}\\
    &\sum_{j=0}^{3n(p^m-1)}(6n(p^m-1)-2j-1)!!(-120)^{j}\binom{6n(p^m-1)}{2j}(h(-1)+h(-2))^{2j}.
  \end{align*}
\end{thm}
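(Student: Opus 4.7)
The plan is to derive Theorem \ref{t:Jnm1} by pure assembly from three pieces that are already in place: the closed formula \eqref{eq:Jnm2} for $J_{n,m}$ in terms of the partial differential operators $A$ and $C$; the commutation relation $[A,C]=0$ from the Lemma preceding Proposition \ref{p:pk}; and the evaluation of $A^{6n(p^m-1)}\bone$ supplied by Corollary \ref{c:Apowers}.

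First I would start from \eqref{eq:Jnm2} and, using $[A,C]=0$, split the exponent on $A$ to rewrite each summand as
\[
A^{6i+6n(p^m-1)}C^{4n-4i}\bone \;=\; A^{6i}C^{4n-4i}\bigl(A^{6n(p^m-1)}\bone\bigr).
\]
Because the operator $A^{6i}C^{4n-4i}$ now depends only on the summation index $i$, the factors of the scalar prefactor $(-1)^{n(p+1)}(588\cdot 120^{3(p^m-1)})^n$ that depend on $m$, namely $(-1)^{n(p+1)}\cdot 120^{3n(p^m-1)}$, can be pulled inside and attached to $A^{6n(p^m-1)}\bone$. Corollary \ref{c:Apowers} then replaces that product by the inner $j$-sum appearing in the statement of the theorem.

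Next I would simplify the scalar that remains in front of $A^{6i}C^{4n-4i}$. After removing $(-1)^{n(p+1)}120^{3n(p^m-1)}$, the residual $i$-dependent coefficient equals
\[
\binom{n}{i}\,\frac{588^n\cdot 250^i}{147^i\cdot(6n(p^m-1)+6i-1)!!\cdot(4n-4i-1)!!}.
\]
Using $588=2^2\cdot 3\cdot 7^2$, $147=3\cdot 7^2$, and $250=2\cdot 5^3$, one computes
\[
588^n\cdot\frac{250^i}{147^i} \;=\; 4^n\cdot 147^{\,n-i}\cdot 250^i \;=\; 2^{2n+i}\cdot 3^{n-i}\cdot 5^{3i}\cdot 7^{2(n-i)},
\]
which is exactly the coefficient of $A^{6i}C^{4n-4i}$ appearing in the theorem.

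There is no real obstacle here beyond bookkeeping: the nontrivial inputs (the commutativity of $A$ and $C$, and the explicit formula for $A^{6n(p^m-1)}\bone$) have been established in the preceding paragraphs, so the proof is essentially a substitution and an arithmetic identity for prefactors. The only thing one must be a bit careful about is tracking the sign $(-1)^{n(p+1)}$, which is consumed entirely by Corollary \ref{c:Apowers} when the latter is applied to $A^{6n(p^m-1)}\bone$; once that is accounted for, the displayed identity of Theorem \ref{t:Jnm1} falls out immediately.
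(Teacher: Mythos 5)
Your proposal is correct and is exactly the paper's argument: the paper's proof consists of the single sentence that the theorem follows by combining equation \eqref{eq:Jnm2} with Corollary \ref{c:Apowers}, and your write-up simply makes explicit the commutativity step, the absorption of the factor $(-1)^{n(p+1)}120^{3n(p^m-1)}$ into the corollary, and the prefactor identity $588^n\cdot(250/147)^i = 2^{2n+i}\cdot 3^{n-i}\cdot 5^{3i}\cdot 7^{2(n-i)}$, all of which check out.
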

\begin{proof}
  The theorem follows by combining equation \eqref{eq:Jnm2} with Corollary \ref{c:Apowers}.
\end{proof}

The integrality properties of the double factorials appearing above are particularly easy to analyze if $p=2$, because in that case they are $2$-adic units.\ Therefore at this stage we will now restrict to the case $p=2$.

\subsection{Completion of the proof when $p=2$}\label{Sp=2}
Notice now that
\[
  \frac{1}{(6n(p^m-1)+6i-1)!!} =\frac{1}{(6np^m-6(n-i)-1)!!}=\frac{\prod_{j=0}^{3(n-i)-1}((3n)2^{m+1}-2j-1)}{((3n)2^{m+1}-1)!!} 
\]
The product defining the double factorial $((3n)2^{m+1}-1)!!$ contains $3n$ copies of each representative of the unit group $(\ZZ/2^{m+1}\ZZ)^\times$. Hence for $m\geq 2$ we have
\[
  ((3n)2^{m+1}-1)!! \equiv 1 \pmod{2^{m+1}}.
\]
By combining these observations we find that for $p=2$:
\[
  \lim_{m \to \infty} \frac{1}{(6n(p^m-1)+6i-1)!!} = \prod_{j=0}^{3(n-i)-1}(-2j-1)= (-1)^{n-i}(6(n-i)-1)!!
\]
Therefore, if we write
\begin{align*}
  D_{n,m} &= \sum_{i=0}^n\binom{n}{i} \frac{ 2^{2n+i}\cdot 3^{n-i}\cdot 5^{3i}\cdot 7^{2(n-i)}}{(6n(p^m-1)+6i-1)!! \cdot (4n-4i-1)!!} A^{6i}C^{4n-4i},\\
  E_{n,m} &= \sum_{j=0}^{3n(p^m-1)}(6n(p^m-1)-2j-1)!!(-120)^{j}\binom{6n(p^m-1)}{2j}(h(-1)+h(-2))^{2j}
\end{align*}
so that $J_{n,m} = D_{n,m}(E_{n,m})$ by Theorem \ref{t:Jnm1}, we find that the following limit exists for each $n\geq 1$:
\[
  D_n \df \lim_{m \to \infty} D_{n,m} = \sum_{i=0}^n(-1)^{n-i}\binom{n}{i} \frac{ 2^{2n+i}\cdot 3^{n-i}\cdot 5^{3i}\cdot 7^{2(n-i)} \cdot (6(n-i)-1)!!}{(4(n-i)-1)!!} A^{6i}C^{4n-4i}.
\]
This limit acts on the $p$-adic Heisenberg algebra, as it is defined by a finite sum and so it can only change $2$-adic valuations by a bounded amount for each $n$.\ In fact, these are continuous operators on the $p$-adic Heisenberg algebra, and so if we can likewise show that $\lim_{m\to \infty} E_{n,m}$ exists, then we will deduce that $\lim_{m\to \infty} J_{n,m}$ exists.

First observe that because $p=2$:
\begin{align*}
  \binom{6n(p^m-1)}{2j} =& \frac{(6n(2^m-1))!}{(2j)!(6n(2^m-1)-2j)!}\\
  =&\frac{2^{3n(2^m-1)}(3n(2^m-1))!(6n(2^m-1)-1)!!}{2^j j! (2j-1)!!2^{3n(2^m-1)-j}(3n(2^m-1)-j)!(6n(2^m-1)-2j-1)!!}\\
  =&\binom{3n(2^m-1)}{j}\frac{(6n(2^m-1)-1)!!}{ (2j-1)!!(6n(2^m-1)-2j-1)!!}
\end{align*}
and thus
\begin{equation}
  \label{eq:Enm1}
  E_{n,m} = \sum_{j=0}^{3n(2^m-1)}(-120)^{j}\binom{3n(2^m-1)}{j}\frac{(6n(2^m-1)-1)!!}{ (2j-1)!!}(h(-1)+h(-2))^{2j}
\end{equation}
As above, we have the following $2$-adic identity
 \[
  \lim_{m\to \infty} (6n(2^m-1)-1)!!=\lim_{m\to \infty} \frac{((3n)2^{m+1}-1)!!}{\prod_{j=0}^{3n-1}((3n)2^{m+1}-2j-1)}=(-1)^n\frac{1}{(6n-1)!!}
\]
Likewise one shows by elementary means that
\[
\lim_{m\to \infty} \binom{3n(2^m-1)}{j} = \binom{-3n}{j}.
\]
In particular, being a limit of $2$-adic integers, the values $\binom{-3n}{j}$ are also $2$-adic integers. Thus for each $n$, the series $E_{n,m}$ converge to
\[
  E_n \df\lim_{m\to \infty} E_{n,m} = (-1)^n\frac{1}{(6n-1)!!}\sum_{j=0}^{\infty}\binom{-3n}{j}\frac{1}{ (2j-1)!!}(-120(h(-1)+h(-2))^2)^{j}.
\]
We have proved most of the following theorem.

\begin{thm}
  \label{t:main2adic}
  Let $p=2$, and define differential operators and series for all $n\geq 1$ as follows:
\begin{align*}
  D_n &= \sum_{i=0}^n(-1)^{n-i}\binom{n}{i} \frac{ 2^{2n+i}\cdot 3^{n-i}\cdot 5^{3i}\cdot 7^{2(n-i)} \cdot (6(n-i)-1)!!}{(4(n-i)-1)!!} A^{6i}C^{4n-4i},\\
  E_n &= (-1)^n\frac{1}{(6n-1)!!}\sum_{j=0}^{\infty}\binom{-3n}{j}\frac{1}{ (2j-1)!!}(-120(h(-1)+h(-2))^2)^{j},\\
  J_n &= D_n(E_n).
\end{align*}
Then the following properties hold:
\begin{enumerate}
\item For every $n\geq 1$, the series $J_n$ is contained in the $2$-adic Heisenberg algebra. More precisely, if $1\leq r < 2^{3/4}$, then $J_n \in S_r$.
\item The character of $J_n$ satisfies $\hat{f}(J_n) = j^{-n}$. 
\item We have $\abs{J_n} \leq 2^{21n}$ for all $n\geq 1$, where the absolute value is the $2$-adic supremum norm.
\end{enumerate}
\end{thm}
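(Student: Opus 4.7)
The plan is to establish the three properties in the order (2), (1), (3), since the $2$-adic approximation argument for (2) provides the main analytic input that also yields the other bounds. Throughout, I write $J_{n,m} = D_{n,m}(E_{n,m})$ as in Theorem \ref{t:Jnm1}, and verify that the limits defining $D_n$ and $E_n$ are taken in compatible norms so that $J_{n,m} \to J_n$ in $S = S_1$. Once this convergence is secured, continuity of the character map $\hat f$ together with $f(J_{n,m}) = \Delta^n Q^{3n(2^m-1)}$, which tends to $\Delta^n Q^{-3n} = j^{-n}$ in $M_p$ because $Q \equiv 1 \pmod 2$, yields $\hat f(J_n) = j^{-n}$, proving (2).

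To establish $J_{n,m} \to J_n$, observe that the calculations preceding the theorem already give that the scalar coefficients of $D_{n,m}$ converge in $\ZZ_2$ to those of $D_n$, and since both sums range over the same finite index set $0 \leq i \leq n$, this immediately gives $D_{n,m} \to D_n$ in operator norm on $S_1$. For $E_{n,m} \to E_n$: every summand of $E_n$ (and of $E_{n,m}$, when it is nonzero) has norm at most $2^{-3j}$ in $S_1$, since $\nu(120)=3$, the binomials $\binom{-3n}{j}$ are $2$-adic integers, the double factorials $(2j-1)!!$ and $(6n-1)!!$ are $2$-adic units, and the monomial $(h(-1)+h(-2))^{2j}$ has supremum norm at most $1$. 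The tail of $E_n$ past $j = 3n(2^m-1)$ therefore goes to zero uniformly in $m$, while on any fixed head $j \leq J$, coefficient-by-coefficient convergence handles the finitely many terms; the strong triangle inequality then yields $E_{n,m} \to E_n$ in $S_1$ and hence $J_{n,m} \to J_n$.

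For (1), since $A$ and $C$ are finite linear combinations of modes $h(k)$, Lemma \ref{lemhnbnd} shows they act continuously on each $S_r$ with $r \geq 1$, so $D_n$ is continuous on $S_r$; it therefore suffices to show $E_n \in S_r$ for $1 \leq r < 2^{3/4}$. Expanding $(h(-1)+h(-2))^{2j} = \sum_{k=0}^{2j} \binom{2j}{k} h(-1)^k h(-2)^{2j-k}$, every monomial has weight at most $4j$ and its coefficient in $E_n$ has $2$-adic absolute value at most $2^{-3j}$ by the same estimate as above, so $|\mathrm{coeff}| \cdot r^{|I|} \leq (r^4/2^3)^j$, which tends to $0$ precisely when $r < 2^{3/4}$. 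Hence $E_n \in S_r$ and $J_n \in S_r$.

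For (3), submultiplicativity of the operator norm on $S_1$ gives $|J_n|_1 \leq |D_n|_1 \cdot |E_n|_1$, and the argument in (1) at $r=1$ shows $|E_n|_1 \leq 1$. For $|D_n|_1$: Lemma \ref{lemhnbnd} gives $|h(k)|_1 \leq 1$ for all $k$, while the scalars appearing in $A$ and $C$ have $2$-adic absolute value bounded by $|1/240| = 2^4$ and $|1/480| = 2^5$ respectively, yielding $|A|_1 \leq 2^4$ and $|C|_1 \leq 2^5$. Thus $|A^{6i} C^{4n-4i}|_1 \leq 2^{24i} \cdot 2^{20(n-i)} = 2^{20n+4i}$, while the scalar prefactor of the $i$th term of $D_n$ has $2$-adic absolute value $2^{-(2n+i)}$ (all other factors being $2$-adic units), so the $i$th term has operator norm at most $2^{18n+3i}$, maximized at $i=n$ to give $2^{21n}$. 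I expect the main obstacle to be the uniform-convergence step $E_{n,m} \to E_n$, where exponential tail decay (controlled by $2^{-3j}$) must be balanced against merely pointwise head convergence via the strong triangle inequality; the remaining steps reduce to routine $2$-adic valuation arithmetic together with Lemma \ref{lemhnbnd}.
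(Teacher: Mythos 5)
Your proposal is correct and takes essentially the same route as the paper: the same $2$-adic limit computations for the double factorials and binomial coefficients give $J_{n,m}\to J_n$, continuity of $\hat f$ gives (2), the factor $(-120)^j$ against the weight bound $4j$ gives membership in $S_r$ for $r<2^{3/4}$, and the same power-of-$2$ bookkeeping on $A$, $C$ and the prefactors of $D_n$ gives the bound $2^{21n}$ (your slightly cruder estimate $\abs{C}_1\leq 2^5$ is harmless since the maximum over $i$ occurs at $i=n$, where $C$ does not appear).
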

\begin{proof}
We have already explained why the first part of part (1) holds.\ For the second part of (1), notice that $D_n$ is a finite differential operator that preserves each subspace $S_r$.\ Therefore, to establish (1) it suffices to show that $E_n \in S_r$ for values of $r$ in the specified range.\ Since $p=2$ we can ignore the double factorials when analyzing the integrality properties of $E_n$.\ Then the second part of (1) follows immediately from the definition of $S_r$ thanks to the factor of $2^{3j}$ appearing in the coefficients via the factor $(-120)^j$.

Part (2) also follows from the previous discussion, so it only remains to discuss part (3).\ For this, notice that $2^4A$ and $2^4C$ are both $2$-adically integral.\ Therefore $2^{21n}D_n$ preserves $2$-adic integrality.\ Since $E_n$ is $2$-adically integral, so then is $2^{21n}J_n$ and thus $\abs{J_n}\leq 2^{21n}$ as claimed.
\end{proof}

Recall from equation (73) of \cite{Vonk} that for real numbers $r\geq 0$ the space of $r$-overconvergent $2$-adic modular forms of tame level $1$ and weight $0$ can be described as
\begin{align}\label{M0dagger}
M_0^\dagger(r) \df \left\{\sum_{n\geq 0} a_nj^{-n} \mid \abs{a_n} 2^{12nr} \to 0\right\}.
\end{align}

\begin{cor}
  \label{c:main2adic}
  When $p=2$ the overconvergent space $M_0^\dagger(7/4)$ is contained in the image of the normalized character map $\hat{f}$.
\end{cor}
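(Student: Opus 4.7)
The plan is to observe that Corollary \ref{c:main2adic} follows almost immediately from Theorem \ref{t:main2adic} by matching the exponent in the overconvergence condition with the norm bound on the states $J_n$. Since $\hat{f}$ is a continuous linear map between $p$-adic Banach spaces, we just need to verify that a general element of $M_0^\dagger(7/4)$, viewed as a $2$-adically convergent series $\sum_{n\geq 0} a_n j^{-n}$, has a $\hat{f}$-preimage given by the analogous series $\sum_{n\geq 0} a_n J_n$, where the $J_n$ are the states constructed in Theorem \ref{t:main2adic}.

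First I would unpack the definition \eqref{M0dagger}: an element $g = \sum_{n\geq 0} a_n j^{-n} \in M_0^\dagger(7/4)$ satisfies $|a_n| 2^{12 n \cdot 7/4} = |a_n| 2^{21n} \to 0$. Next, combining this with the bound $|J_n| \leq 2^{21n}$ from Theorem \ref{t:main2adic}(3), I obtain $|a_n J_n| \leq |a_n| 2^{21n} \to 0$. Since $S$ is a $p$-adic Banach space under the supremum norm $|\cdot| = |\cdot|_1$, the series
\[
u \df \sum_{n\geq 0} a_n J_n
\]
converges to an element of $S$.

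Finally, applying the continuous linear map $\hat{f}$ term by term and using Theorem \ref{t:main2adic}(2), I would conclude
\[
\hat{f}(u) = \sum_{n\geq 0} a_n \hat{f}(J_n) = \sum_{n\geq 0} a_n j^{-n} = g,
\]
so $g \in \im \hat{f}$, as required.

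There is really no obstacle here, as the numerical coincidence $12 \cdot \tfrac{7}{4} = 21$ is precisely what makes the radius $7/4$ the natural overconvergence threshold attainable by the states $J_n$ constructed in Theorem \ref{t:main2adic}: the exponent $21$ in the bound $|J_n| \leq 2^{21n}$ is dictated by the factors $2^{2n+i}\cdot 3^{n-i}\cdot 5^{3i}\cdot 7^{2(n-i)}$ and the normalizations of $A$ and $C$ in the operator $D_n$, while the $2$-adic valuation of $j^{-n}$ in terms of $q$-expansion scales like $12n$. All genuine work has already been carried out in Theorem \ref{t:main2adic}; the Corollary merely packages the estimate into the language of overconvergent modular forms via \eqref{M0dagger}.
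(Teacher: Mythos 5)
Your proof is correct and follows exactly the route the paper intends: the paper's own proof simply cites Theorem \ref{t:main2adic}, continuity of $\hat{f}$, and the description \eqref{M0dagger}, and your write-up spells out the same matching of the exponent $12n\cdot\tfrac{7}{4}=21n$ against the bound $\abs{J_n}\leq 2^{21n}$ that the paper's earlier discussion in Section \ref{s:weightzero} sets up.
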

\begin{proof}
This follows immediately from Theorem \ref{t:main2adic}, the continuity of the character map as established in \cite{FMpadic}, and the description of $M_{0}^\dagger(r)$.
\end{proof}

\begin{rmk}
Corollary \ref{c:main2adic} could be improved if the bound in part (3) of Theorem \ref{t:main2adic} could be improved.\ For example, surjectivity of the $2$-adic character map in weight $0$ would follow from an absolute bound on the series $J_n$, independent of $n$.\ We do not know if, or by how much, part (3) of Theorem \ref{t:main2adic} could be improved.\ A different approach could be to work with the Hauptmodul $\Delta(2\tau)/\Delta(\tau)$ on $\Gamma_0(2)$ in place of $j^{-1}$, as explained above equation (77) in \cite{Vonk}.\ Note that by \cite{Serre2}, all classical forms on $\Gamma_0(2)$ are $2$-adic modular forms of tame level one. Hence $\Delta(2\tau)/\Delta(\tau)$ is contained in Serre's ring $M_p$, and it could conceivably be in the image of the $2$-adic character map for the Heisenberg algebra. A first step in this direction would be to provide a concrete description of a state in the $2$-adic Heisenberg algebra whose character is $\Delta(2\tau)/\Delta(\tau)$.
\end{rmk}

Regarding the weights of relevant states, we have
\begin{cor}\label{corzero}
Assume $p=2$ and suppose that $2^{1/2}\leq r< 2^{3/4}$.\ Then each $J_n$ is contained in $S_r$, it has $X$-weight $0$, and it satisfies $L[0]J_n=0$.\ In particular, $S$ has an infinite-dimensional square bracket $0$-weight space.
\end{cor}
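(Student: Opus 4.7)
The plan is to verify the four claims of the corollary in turn. The containment $J_n\in S_r$ for $1\leq r<2^{3/4}$ is supplied by Theorem \ref{t:main2adic}(1), so the first assertion is free throughout the range given.

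For the $X$-weight and the $L[0]$-action, I would observe that each approximating state $J_{n,m}\in S_{\alg}$ is a square-bracket eigenstate: since the character map is weight-graded and $\hat{f}(J_{n,m})=\Delta^nQ^{3n(p^m-1)}$ is of weight $12n+12n(p^m-1)=12n\cdot 2^m$, we have $L[0]J_{n,m}=12n\cdot 2^m\,J_{n,m}$.\ As $m\to\infty$ the integers $12n\cdot 2^m$ tend to $0$ in $\ZZ_2=X$, so provided the sequence $(J_{n,m})_m$ converges to $J_n$ in the $S_r$-norm for some $r\geq p^{1/p}=2^{1/2}$, Theorem \ref{thmsuwt} immediately yields both that the $X$-weight of $J_n$ is $0$ and that $L[0]J_n=0$.\ This explains why the lower bound $2^{1/2}$ on $r$ is required.

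The key step, and the main obstacle, is therefore upgrading convergence from the $S=S_1$-topology to the stronger $S_r$-topology. For this I would return to the factorization $J_{n,m}=D_{n,m}(E_{n,m})$ used in the proof of Theorem \ref{t:main2adic}: each $D_{n,m}$ is a finite linear combination of operators $A^{6i}C^{4n-4i}$, which are bounded on $S_r$ by Lemma \ref{lemhnbnd}, with operator norms uniformly bounded in $m$ and with scalar coefficients converging $2$-adically to those of $D_n$; meanwhile the same $2$-adic estimates on binomial coefficients and double factorials that place $E_n\in S_r$ in the proof of Theorem \ref{t:main2adic}(1) in fact show $E_{n,m}\to E_n$ in the $S_r$-norm. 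A single application of the triangle inequality $\abs{D_{n,m}E_{n,m}-D_nE_n}_r\leq\abs{D_{n,m}}_r\abs{E_{n,m}-E_n}_r+\abs{D_{n,m}-D_n}_r\abs{E_n}_r$ then yields $J_{n,m}\to J_n$ in $S_r$, as required.

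Finally, for infinite-dimensionality of the $0$-eigenspace of $L[0]$, note that the states $J_n$ for $n\geq 1$ are linearly independent in $S$ because their images $\hat{f}(J_n)=j^{-n}$ in $M_p$ are linearly independent by Theorem \ref{t:main2adic}(2). They all sit in the $L[0]$-eigenspace for eigenvalue $0$ by the preceding argument, so this eigenspace is infinite-dimensional, completing the corollary.
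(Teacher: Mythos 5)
Your proposal follows the paper's route exactly: the paper's own proof of this corollary is just a citation of Theorem \ref{t:main2adic}(1),(2) followed by an appeal to Theorem \ref{thmsuwt}, and you have correctly reconstructed what that entails --- including the point the paper leaves implicit, namely that Theorem \ref{thmsuwt} requires the approximating sequence $(J_{n,m})_m$ to be Cauchy in the $S_r$-topology rather than merely in $S=S_1$, so that the estimates behind Theorem \ref{t:main2adic}(1) (the factor $(-120)^j$ played against $r<2^{3/4}$) must be rerun for the differences $E_{n,m}-E_n$. Your factorization $J_{n,m}=D_{n,m}(E_{n,m})$ with the product triangle inequality handles this correctly, and your linear-independence argument via $\hat{f}(J_n)=j^{-n}$ is exactly the intended reason the $0$-weight space is infinite-dimensional. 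The one step you should repair is the justification that $J_{n,m}$ is an $L[0]$-eigenstate: gradedness of $f$ says that a state of square-bracket weight $k$ has character of weight $k$, not the converse, and since $f$ has a very large kernel you cannot infer homogeneity of $J_{n,m}$ from the weight of $\Delta^nQ^{3n(2^m-1)}$. Instead read it off the construction: by \eqref{eq:Jnm}, $J_{n,m}$ is a linear combination of the monomials $h[-2]^{6(i+n(2^m-1))}h[-3]^{4(n-i)}\mathbf{1}$, each of square-bracket weight $2\cdot 6(i+n(2^m-1))+3\cdot 4(n-i)=12n\cdot 2^m$, so $L[0]J_{n,m}=12n\cdot 2^m\,J_{n,m}$ directly; these eigenvalues tend to $0$ in $\ZZ_2$ and the rest of your argument goes through unchanged.
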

\begin{proof} 
Thanks to Theorem \ref{t:main2adic}(1), (2) we may apply Theorem \ref{thmsuwt}. Then the Corollary follows.
\end{proof}

\section{Continuous action of $S_{\alg}[\ ]$ on $S_R$}\label{Shn}
In this section we let $p$ denote an arbitrary prime. The main result is
\begin{thm}\label{thm19}
If $R\geq p^{1/p}$ then each square bracket Heisenberg mode
$h[n]$ acts continuously on $S_R$.
\end{thm}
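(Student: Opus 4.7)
The strategy parallels the proof of Theorem \ref{thmL} for $L[0]$. The first step is to derive an explicit expansion of $h[n]$ in terms of the round-bracket modes $h(m)$, generalizing formulas \eqref{eq:hminus2} and \eqref{eq:hminusr}. Taking residues of the defining identity $Y[h,z] = e^zY(h,e^z-1)$, or equivalently performing the substitution $w = e^z-1$ with $z = \log(1+w)$ and $dz = dw/(1+w)$, one obtains
\[
  h[n] = \sum_{m\geq n} a_{n,m}\,h(m), \qquad a_{n,m} \df [w^m]\log(1+w)^n,
\]
for every integer $n$, with only finitely many terms (those with $n \leq m \leq -1$, occurring only when $n<0$) involving creation modes.

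Next I would split $h[n]$ as the sum of its creation part $\sum_{n\leq m\leq -1}a_{n,m}h(m)$ and its annihilation part $\sum_{m\geq 1}a_{n,m}h(m)$. The creation part is a finite linear combination of multiplication operators and is automatically bounded on $S_R$ for any $R \geq 1$. For the annihilation part, Lemma \ref{lemhnbnd} gives $\abs{h(m)}_R \leq R^{-m}$ uniformly in $m \geq 1$, so by the strong triangle inequality it suffices to verify
\[
  \sup_{m\geq 1}\abs{a_{n,m}}_p\,R^{-m} < \infty
\]
for $R \geq p^{1/p}$, in complete parallel with the reduction in the proof of Theorem \ref{thmL}.

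The main technical task, and the expected principal obstacle, is a sharp $p$-adic estimate on $a_{n,m}$. For $n \geq 0$, expanding $\log(1+w)^n$ as an $n$-fold product of $\sum_{j\geq 1}(-1)^{j-1}w^j/j$ yields $\abs{a_{n,m}}_p \leq p^{n\log_p m}$, a poly-logarithmic bound comfortably dominated by $R^{-m}$ for any $R>1$. For $n<0$ one invokes the generalized Bernoulli representation $a_{n,m} = B^{(m+1)}_{m-n}(1)/(m-n)!$ of \eqref{eq:hminusr}, the $p$-adic estimate on $B^{(m+1)}_{m-n}(1)$ highlighted in the remark following that equation, and Legendre's formula controlling $\nu_p((m-n)!)$. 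A case analysis on the $p$-divisibility of $m$ and its small neighbours, of the same flavour as the closing paragraphs of the proof of Theorem \ref{thmL}, should absorb the leading factorial contribution and deliver a bound $\abs{a_{n,m}}_p\,R^{-m} \leq C_n$ uniform in $m$ whenever $R \geq p^{1/p}$, completing the proof.
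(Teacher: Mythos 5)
Your reduction is sound as far as it goes: the expansion $h[n]=\sum_{m\ge n}a_{n,m}h(m)$ with $a_{n,m}=[w^m]\bigl(\log(1+w)\bigr)^n$ is correct, the finitely many creation terms are harmless, and Lemma \ref{lemhnbnd} does reduce everything to bounding $\sup_{m\ge 1}\abs{a_{n,m}}R^{-m}$. For $n\ge 1$ your product estimate $\abs{a_{n,m}}\le p^{n\log_p m}=m^n$ is correct and gives boundedness for every $R>1$; this is in fact more elementary than the paper's treatment of the positive modes, which routes through Stirling numbers of the first kind and the Komatsu--Young valuation bound (Theorem \ref{thm20}). The gap is in the negative modes. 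The estimates you propose to combine --- the remark's bound $\nu\bigl(B^{(m+1)}_{m-n}(1)\bigr)\ge -\lfloor\log_p((m+1)(m-n)+1)\rfloor$ together with Legendre's formula for $\nu((m-n)!)$ --- yield only $\abs{a_{n,m}}\le p^{(m-n)/(p-1)}\cdot\mathrm{poly}(m)$, and with $R=p^{1/p}$ this gives $\abs{a_{n,m}}R^{-m}\le p^{m/(p(p-1))+O(\log m)}$, which is unbounded. Moreover this loss is not an artifact of careless bounding: for $n=-1$ the coefficient $a_{-1,j}=B^{(j+1)}_{j+1}(1)/(j+1)!=[w^j]\bigl(1/\log(1+w)\bigr)$ has numerator with only polynomially bounded $p$-power in its denominator, so $\nu(a_{-1,j})$ really is close to $-\nu((j+1)!)\approx -j/(p-1)$ (e.g.\ for $p=2$ the coefficients of $h(3)$ and $h(5)$ are $-19/720$ and $-863/60480$, with $\nu_2=-4$ and $-6$). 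No case analysis on the $p$-divisibility of $m$ and its neighbours can absorb a deficit that grows exponentially in $m$; the analogous analysis in the proof of Theorem \ref{thmL} only ever has to absorb factors of size $p^{O(\log n)}$.

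The paper therefore does something genuinely different for the negative modes, and these are the ingredients your outline is missing. First, it uses the commutator identity $[L[-1],h[n]]=-nh[n-1]$ together with the boundedness of $L[-1]=L(0)+L(-1)$ on every $S_R$ (from \eqref{-1mode} and Lemma \ref{lemS}) to reduce the entire family $h[-n]$, $n\ge 1$, to the single operator $h[-1]$. Second, for $h[-1]$ it does not estimate the generalized Bernoulli numbers and the factorial separately, but rewrites the coefficients in terms of Stirling numbers of the first kind via $s(m,m-r)=\binom{m-1}{m-r-1}B^{(m)}_r$ and invokes the Komatsu--Young lower bound $\nu(s(j+1,m+1))\ge\nu(j!)-\nu(\lfloor j/p^r\rfloor!)-mr$, which captures the near-cancellation between $s(j+1,m+1)$ and $j!$ that your term-by-term estimate discards. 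To complete your argument you would need to import both of these steps (or an estimate of comparable strength); as written, the negative-mode half of your proof cannot reach the threshold $p^{1/p}$.
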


The proof is broken into two pieces, treating the annihilating and creative modes separately.

\subsection{The operators $h[m]$ for positive $m$} In this Section we consider the operators
$h[m], m>0$.\ They are easier to handle $p$-adically than the same operators for $m<0$ just because their expressions in terms of $h(j)$ are easier do describe.\ We will prove
\begin{thm}\label{thm20} The following hold for all $m>0$:\\
(a) If $R>1$ then $h[m]$ is a bounded operator on $S_R$.\\
(b) If $R\geq p^2$ the operators $h[m]$ have uniformly bounded norms on $S_R$, indeed $|h[m]|_R\leq p^{-1}$.

\end{thm}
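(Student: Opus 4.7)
The approach is to derive an explicit mode expansion of $h[m]$ in terms of the annihilation modes $h(n)$ with $n\geq m$, bound its coefficients $p$-adically, and then invoke Lemma \ref{lemhnbnd}.

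Starting from $Y[h,z]=e^zY(h,e^z-1)$ and applying the change of variable $w=e^z-1$ (so $z=\log(1+w)$ and $dz=dw/(1+w)$), the residue identity gives
\[
h[m] \;=\; \Res_z\bigl(z^m e^z Y(h,e^z-1)\,dz\bigr) \;=\; \Res_w\bigl((\log(1+w))^m Y(h,w)\,dw\bigr) \;=\; \sum_{n\geq m}\lambda_{m,n}\,h(n),
\]
where $\lambda_{m,n}\df[w^n](\log(1+w))^m$; the bound $n\geq m$ holds because $\log(1+w)$ vanishes at $w=0$. In terms of generalized Bernoulli numbers, $\lambda_{m,n}=B^{(n+1)}_{n-m}(1)/(n-m)!$, which is the positive-index analog of equation \eqref{eq:hminusr}.

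Next, expanding the $m$-fold product of $\log(1+w)=\sum_{k\geq 1}(-1)^{k-1}w^k/k$ gives
\[
\lambda_{m,n} \;=\; (-1)^{n-m}\!\!\sum_{\substack{k_1,\ldots,k_m\geq 1\\ k_1+\cdots+k_m=n}}\!\!\frac{1}{k_1\cdots k_m}.
\]
Since every positive integer $k$ satisfies $k\geq p^{\nu_p(k)}$, we have $\abs{1/k}_p\leq k$, and the non-archimedean triangle inequality together with the AM--GM inequality $\prod_i k_i\leq (n/m)^m$ yield the uniform estimate
\[
\abs{\lambda_{m,n}}_p\;\leq\;(n/m)^m.
\]

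Combining this with $\abs{h(n)}_R\leq R^{-n}$ from Lemma \ref{lemhnbnd}, the individual summands satisfy $\abs{\lambda_{m,n}h(n)}_R\leq (n/m)^mR^{-n}$. For part (a), fix $m\geq 1$ and $R>1$; exponential decay beats polynomial growth, so $(n/m)^mR^{-n}\to 0$ as $n\to\infty$, the series for $h[m]$ converges in operator norm on $S_R$, and $\abs{h[m]}_R$ is finite. For part (b), assume $R\geq p^2$ and write $x\df n/m\geq 1$. The uniform bound $(n/m)^mR^{-n}\leq p^{-2m}$ reduces to the elementary inequality $x\leq p^{2(x-1)}$ on $[1,\infty)$, which holds at $x=1$ with equality, while the derivative $2\log(p)p^{2(x-1)}-1\geq 2\log 2-1>0$ for $p\geq 2$ confirms it throughout. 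Consequently $\abs{h[m]}_R\leq p^{-2m}\leq p^{-2}<p^{-1}$ uniformly in $m$, giving (b) with a slightly stronger constant.

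The main subtlety, rather than a true obstacle, is verifying rigorously that the formally-constructed series equals $h[m]$ as an operator on $S_R$. Since the displayed bound shows the partial sums are Cauchy in operator norm on $S_R$, they converge to a bounded operator that agrees with $h[m]$ on the dense subspace $S_{\alg}$ by construction of the square-bracket formalism; continuity then extends the equality to all of $S_R$, in parallel with the argument used for $L[0]$ in the proof of Theorem \ref{thmL}.
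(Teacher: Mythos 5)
Your proof is correct, and it takes a genuinely different route from the paper's. The paper writes $h[m+1]=(m+1)!\sum_{j\geq m}\frac{s(j+1,m+1)}{(j+1)!}h(j+1)$ in terms of Stirling numbers of the first kind and then imports a nontrivial lower bound of Komatsu--Young on $\nu\bigl(s(j+1,m+1)\bigr)$ (in terms of the parameter $r$ with $mp^r\leq j<mp^{r+1}$) to control the coefficients, handling the cases $R>1$ and $R\geq p^2$ by separate estimates on the resulting expression $p^{mr+m+t+r+1}R^{-1-mp^r}$. You instead bound the very same coefficients --- note $\lambda_{m,n}=m!\,s(n,m)/n!=[w^n](\log(1+w))^m$ --- directly from the $m$-fold multinomial expansion of $\log(1+w)$, using only the trivial estimate $\abs{1/k}_p\leq k$, the ultrametric inequality, and AM--GM to get $\abs{\lambda_{m,n}}_p\leq (n/m)^m$. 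This is self-contained (no appeal to the Komatsu--Young valuation bound), the two parts of the theorem then both follow from the single elementary inequality $x\leq p^{2(x-1)}$ on $[1,\infty)$, and you in fact obtain the sharper uniform bound $\abs{h[m]}_R\leq p^{-2m}$ for $R\geq p^2$, which implies the paper's $p^{-1}$. Your closing remark on why the coefficientwise bound suffices --- the partial sums are Cauchy in the (ultrametric) operator norm and agree with $h[m]$ on the dense subspace $S_{\alg}$ --- is exactly the right justification and is the same density argument the paper uses implicitly.
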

 It follows directly from \cite[equations (16), (17)]{KomatsuYoung} that for $m\geq0$ we have
\begin{align}\label{hformula}
    h[m+1] =  (m+1)!\sum_{j\geq m} \frac{s(j+1, m+1)}{(j+1)!}h(j+1)
\end{align}
where $s(i, m)$ is a Stirling number of the first kind (cf.\ Subsection \ref{SSspecnos}).

We treat the  case when $m=0$ separately.
\begin{lem} The operator $h[1]$ is a contraction operator on each $S_R$ for $R\geq 1$, and indeed $\abs{h[1]}_R\leq R^{-1}$.
\end{lem}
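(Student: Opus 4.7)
The plan is to specialize the explicit formula (\ref{hformula}) at $m=0$ and then observe a cancellation between the $1/(j+1)$ coming from the Stirling number with the factor of $k$ that arises whenever $h(k)$ annihilates a copy of $h(-k)$. First, one computes $s(j+1,1) = (-1)^j j!$ directly from the definition of Stirling numbers of the first kind, so that
\[
h[1] = \sum_{j\geq 0}\frac{(-1)^j j!}{(j+1)!}h(j+1) = \sum_{k\geq 1}\frac{(-1)^{k-1}}{k}h(k).
\]

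The main step is to bound $\tfrac{1}{k}h(k)$ uniformly on $S_R$. Writing a basis monomial as $h^I$, the commutation relations give $h(k)h^I = k\cdot m_k(I)\cdot h^{I\setminus\{-k\}}$, where $m_k(I)$ is the multiplicity of $-k$ in $I$ and the factor of $k$ comes from $[h(k),h(-k)]=k$. Thus the operator $\tfrac{1}{k}h(k)$ sends $h^I$ to the integer $m_k(I)$ times $h^{I\setminus\{-k\}}$, so it carries coefficients in $\ZZ_p$ into $\ZZ_p$ and decreases total degree by exactly $k$. This gives the clean bound
\[
\bigl|\tfrac{1}{k}h(k)w\bigr|_R \;\leq\; R^{-k}\,|w|_R \qquad (R\geq 1),
\]
for every $w\in S_R$, which sharpens Lemma \ref{lemhnbnd} by the factor $|k|$.

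Next I would verify that the formal sum defining $h[1]v$ actually converges in $S_R$. For $v=\sum_I a_I h^I\in S_R$, the coefficient of a fixed monomial $h^J$ in $h[1]v$ is the finite-looking but a priori infinite sum $\sum_{k\geq 1}(-1)^{k-1}(m_k(J)+1)\,a_{J\cup\{-k\}}$; since $|a_{J\cup\{-k\}}|R^{|J|+k}\to 0$ and $m_k(J)+1\in\ZZ$, this series converges in $\QQ_p$ and its absolute value is at most $\sup_k|a_{J\cup\{-k\}}|$. Taking $\sup_J$ of the result times $R^{|J|}$, and using $R^{|J|}=R^{-k}R^{|J\cup\{-k\}|}\leq R^{-1}R^{|J\cup\{-k\}|}$ for $k\geq 1$ and $R\geq 1$, produces the desired estimate $|h[1]v|_R\leq R^{-1}|v|_R$.

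The only even mildly nontrivial point is keeping track of the cancellation between $1/k$ and the commutator factor $k$; once that is isolated, every remaining coefficient is a $p$-adic integer and the bound is immediate from the definition of $|\cdot|_R$. No obstacle is expected, since the geometric decay $R^{-k}$ dominates all terms simultaneously as soon as $R\geq 1$.
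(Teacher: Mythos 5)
Your proof is correct, and it takes a genuinely different route from the paper --- one that is in fact more robust. The paper's own proof is a one-liner: it asserts $h[1]=h(1)$ and then invokes Lemma \ref{lemhnbnd}. But that identity contradicts the paper's own formula \eqref{hformula} specialized at $m=0$, which with $s(j+1,1)=(-1)^j j!$ yields exactly your expansion $h[1]=\sum_{k\geq 1}\tfrac{(-1)^{k-1}}{k}h(k)$; a direct check confirms the two operators differ, e.g.\ $h[1]h(-2)\mathbf{1}=-\mathbf{1}$ while $h(1)h(-2)\mathbf{1}=0$. So your argument supplies what is actually needed to prove the lemma. Its key observation --- that the $p$-adically unbounded denominators $1/k$ are exactly cancelled by the commutator factor $k$ in $[h(k),h(-k)]=k\,\id$, so that $\tfrac1k h(k)$ sends integral coefficients to integral coefficients while lowering degree by $k$ --- is precisely the sharpening of Lemma \ref{lemhnbnd} without which the estimate fails: the crude bound $\abs{\tfrac1k h(k)}_R\leq \abs{k}^{-1}R^{-k}$ obtained from Lemma \ref{lemhnbnd} alone is unbounded in $k$ when $R=1$. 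With the cancellation in hand, the ultrametric inequality gives $\abs{h[1]v}_R\leq \sup_{k\geq 1}R^{-k}\abs{v}_R=R^{-1}\abs{v}_R$ for all $R\geq 1$, and your verification that the coefficient of each fixed monomial $h^J$ is a convergent series in $\QQ_p$ whose values still satisfy the $S_R$ decay condition correctly handles the convergence of the infinite sum of operators. In short: the paper's proof buys brevity at the cost of resting on a false identity; yours buys correctness at the cost of a short computation that, happily, also illustrates the same Stirling-number mechanism used later in the proofs of Theorems \ref{thm20} and \ref{thmnegop}.
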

\begin{proof}
 We have $h[1]=h(1)$ so this is a special case of Lemma \ref{lemhnbnd}.   
\end{proof}

\begin{proof}[Completion of proof of Theorem \ref{thm20}] The indexing in \eqref{hformula} is chosen so as to conform to \cite[Theorem 6]{KomatsuYoung} where Komatsu-Young give the following lower bound on $\nu(s(j+1, m+1))$: given integers $j\geq m\geq 1$, let $r$ be such that $m p^r\leq j < m p^{r+1}$. Then
\[
\nu(s(j+1, m+1)) \geq \nu(j!)-\nu\left( \lfloor j/p^r \rfloor ! \right)-mr.
\]

Consequently, if $d_{j, m}$ is the coefficient of
$h(j+1)$ in equation \eqref{hformula}, then 
\[\nu(d_{j, m}) \geq \nu((m+1)!)-\nu(j+1)-\nu\left( \lfloor j/p^r\rfloor !  \right)-mr.\]

Let $t$ be the least positive integer such that
$m\leq p^t$.\ Then $\nu(j+1)\leq t+r+1$ and
\begin{align*}
\nu_p(d_{j, m}) &\geq \nu((m+1)!)-(t+r+1)-\nu\left( \lfloor j/p^r\rfloor !  \right)-mr\\
&\geq \nu(m!)-\nu((mp)!)-(t+r+1+mr)\\
&=-(t+r+1+mr+m)
\end{align*}
where the last equality follows from two applications of Legendre's formula for $\nu(n!)$.  

Thus in order to determine whether $h[m+1]$ is bounded on $S_R$ we must consider the expression
\begin{align}\label{xineq}
  \sup_{r\geq 0} p^{t+r+1+mr+m}\abs{h(j+1)}_R&\leq p^{t+r+1+mr+m}
 R^{-(j+1)}\notag \\
 & \leq \sup_{r\geq 0}  p^{mr+m+t+r+1}R^{-1-mp^r}
\end{align}
where the second inequality comes from an application of Lemma \ref{lemhnbnd}.

Suppose now that we assume that $R>1$.\ Then there is a positive integer $k$ such that $R\geq p^{1/p^k}$ and we have
\[
 p^{r(m+1)} R^{-1-mp^r}\leq p^{r(m+1)-p^{-k}-mp^{r-k}}
\rightarrow 0.
\]
and the righthand side of this inequality goes to $0$ as $r$ tends to infinity. By equation  \eqref{hformula} this shows that if $R>1$ then for fixed $m\geq 1$ indeed $h[m+1]$ is bounded.\ This completes the proof of part (a) of the Theorem.

 Turning to part (b), let us reconsider the expression \eqref{xineq}. We assert that the supremum is achieved for $r=0$ under the assumption $R\geq p^{2/(p-1)}$. To see this, for any $r\geq 1$ we have
 \begin{align*}
 &1+p+...+p^{r-1} \geq r    \\
 \Rightarrow  &\  \  2\frac{p^r-1}{p-1}\geq 2r\geq r\left(1+\frac{1}{m}\right)\\
 \Rightarrow  &\  \ R^{m(p^r-1)}\geq p^{mr+r}\\
 \Rightarrow  &\  \ \frac{p^{m+t+1}}{R^{1+m}}\geq 
 \frac{p^{mr+m+t+r+1}}{R^{1+mp^r}},
 \end{align*}
 and this proves the assertion.\ Therefore
 equation \eqref{xineq} shows that 
 \[ 
 \abs{h[m+1]}_R \leq  p^{m+t+1}R^{-1-m}
 \]
 as long as $R\geq p^{2/(p-1)}$.\ By definition of $t$ we certainly have $t\leq m$.\ So if we now assume that $R\geq p^2$ then the last displayed expression is  bounded by $p^{-1}$ for any $m$.
\end{proof}

\subsection{The operators $h[m]$ for negative $m$} 

In this Section we consider the operators $h[m]$ for  $m<0$. For $m=-1$, $-2$, $-3$ we examined continuity properties of the square-bracket modes in Section \ref{s:hminus2} via a direct arithmetic analysis. In this section we treat the general case via a different method that relies on properties of the $L[-1]$-operator. We will prove
\begin{thm}\label{thmnegop} If $R\geq p^{1/p}$ then each operator
$h[-n]$ for $n\geq 1$ is bounded on $S_R$.
    \end{thm}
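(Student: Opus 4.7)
The plan is to bootstrap the boundedness of $h[-n]$ via the square-bracket translation covariance $[L[-1], Y[v,z]] = \partial_z Y[v,z]$. Applying this to $v = h$ and extracting the coefficient of $z^{n-1}$ yields the key commutator identity
\[
[L[-1], h[-n]] = n\, h[-n-1]\qquad (n \geq 1),
\]
equivalently $h[-n-1] = \tfrac{1}{n}[L[-1], h[-n]]$. This reduces the theorem to two subproblems: bounding $L[-1]$ on $S_R$, and establishing a base case at small $n$.

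First I would verify that $L[-1] = L(0) + L(-1)$ is a bounded operator on $S_R$ for every $R \geq 1$. The operator $L(0)$ acts on the monomial basis $h^I$ as multiplication by the non-negative integer $\abs{I}$, whose $p$-adic absolute value is at most $1$, so the non-Archimedean triangle inequality gives $\norm{L(0)}_R \leq 1$. The Virasoro mode $L(-1)$ is a derivation of $S_{\alg}$ satisfying $L(-1)h(-n) = n\, h(-n-1)$, and the same non-Archimedean reasoning combined with $\abs{n}_p \leq 1$ yields $\norm{L(-1)}_R \leq R$. Hence $\norm{L[-1]}_R \leq R+1$.

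The base cases $n = 1,2,3$ are supplied by the direct arithmetic analysis carried out in Section \ref{s:hminus2}, working from the explicit expansion \eqref{eq:hminusr}. After a change of variables $v = e^z - 1$ in the defining residue, the coefficient of $h(n)$ in $h[-r]$ equals $[v^n](\log(1+v))^{-r}$, and the required decay $\abs{c_n}_p R^{-n} \to 0$ for $R \geq p^{1/p}$ is obtained from $p$-adic estimates on these generalized Gregory/Bernoulli-type coefficients. With the base cases in hand, the inductive step is immediate: assuming $\norm{h[-n]}_R \leq B_n$, the commutator identity gives
\[
\norm{h[-n-1]}_R \leq \frac{2\norm{L[-1]}_R \cdot B_n}{\abs{n}_p} \leq 2n(R+1)B_n,
\]
which is finite (though rapidly growing with $n$) for each fixed $n$. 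This suffices for the theorem, which only claims boundedness of each individual mode $h[-n]$. The main obstacle is the base case: one needs sufficiently sharp $p$-adic bounds on the generalized Bernoulli coefficients --- not merely the conservative Clausen--von Staudt-style bounds noted in the Remark after \eqref{eq:hminusr} --- for the threshold $R \geq p^{1/p}$ to work.
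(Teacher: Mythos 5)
Your overall strategy coincides with the paper's: both proofs use the translation-covariance commutator $[L[-1],h[-n]] = n\,h[-n-1]$ together with the boundedness of $L[-1]=L(0)+L(-1)$ on $S_R$ (which follows from \eqref{-1mode} and Lemma \ref{lemS}) to reduce the whole theorem to a single base case. That reduction is correct, and your norm estimates for $L(0)$ and $L(-1)$ are fine (indeed, non-Archimedeanly one gets $\abs{L[-1]}_R\leq\max(1,R)=R$, and the factor $2$ in your inductive inequality is unnecessary).

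The genuine gap is the base case, and you have in effect conceded it yourself. The induction needs exactly $n=1$, i.e.\ the boundedness of $h[-1]$ on $S_R$ for $R\geq p^{1/p}$ --- and this is where the hypothesis $R\geq p^{1/p}$ actually enters; nothing in the commutator bootstrap uses it. You propose to import base cases $n=1,2,3$ from Section \ref{s:hminus2}, but that section only records the expansions \eqref{eq:hminus2}--\eqref{eq:hminusr} for $r\geq 2$ and applies the resulting operators to the vacuum (where the annihilation modes drop out); it does not establish operator norms on $S_R$, and it does not cover $h[-1]$ at all. Your identification of the coefficient of $h(n)$ in $h[-r]$ as $[v^n](\log(1+v))^{-r}$ is correct, but the "$p$-adic estimates on these generalized Gregory/Bernoulli-type coefficients" that you invoke are precisely the missing content. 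The paper closes this gap by rewriting the coefficients of $h[-1]$ in terms of Stirling numbers of the first kind, via $s(m,m-r)=\binom{m-1}{m-r-1}B_r^{(m)}$, and then applying the Komatsu--Young lower bound $\nu(s(j+1,m+1))\geq\nu(j!)-\nu(\lfloor j/p^r\rfloor!)-mr$ to show that the coefficient of $h(j)$ decays fast enough against $R^{-j}$ when $R\geq p^{1/p}$. Without an estimate of this strength (the Clausen--von Staudt-style bound in the Remark after \eqref{eq:hminusr} is indeed too weak), your argument does not get off the ground.
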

\begin{proof}
To begin the proof of the Theorem, we note that
for all integers $n$ we have the identities
\begin{align}\label{commform}
 [L[-1], h[n]] &= -nh[n-1].
\end{align}
On the other hand we see immediately from equation \eqref{-1mode} and Lemma \ref{lemS}
that $L[-1]$  acts continuously on $S_R$ for each $R\geq 1$.\ It then follows from equation \eqref{commform} that if $h[-1]$ acts continuously on some $S_R$ for some $R\geq 1$, then the same is true for all $h[-n]$ for $n\geq 1$.\ As a consequence, in proving Theorem \ref{thmnegop} it suffices to prove the case $n=-1$.

We have seen that
\begin{align*}
   Y[h, z]&= \sum_{n\in\mathbf{Z}} h[n]z^{-n-1}= e^zY(h, e^z-1)  \\
  &= e^z\sum_{n\in\mathbf{Z}} h(-n-1)(e^z-1)^n
\end{align*}
and therefore as in Section \ref{s:hminus2} we deduce that
\[
 h[-1] = h(-1)+\sum_{n>1}    h(n-1)\frac{B_n^{(n)}(1)}{n!}
\]

From the definitions in Subsection \ref{SSspecnos} we have
\begin{align*}
   B_n^{\ell}(1)&= \sum_{r\geq 0} \binom{n}{r}
 B_r^{(\ell)}, &  B_r^{(\ell)}&\df B_r^{(\ell)}(0).
\end{align*}
 In particular,
\[
B_m^{(m)}(1) = \sum_{r\geq 0} \binom{m}{r}  B_r^{(m)},
\]
and therefore
\[
h[-1] = h(-1)+\sum_{m=2}^{\infty} \sum_{r=0}^m \frac{1}{r!(m-r)!} B_r^{(m)}h(m-1).
\]

Now there is a standard equality
\[
  s(m, m-r) = \binom{m-1}{m-r-1}   B^{(m)}_{r}.
\]
Thus we see that
\begin{align*}
    h[-1] &= h(-1)+\sum_{m=2}^{\infty} \sum_{r=0}^m \frac{1}{r!(m-r)!} s(m, m-r) \frac{(m-r-1)!r!}{(m-1)!}h(m-1)\\
&=h(-1)+\sum_{m=2}^{\infty} \left\{ \sum_{r=0}^m \frac{1}{(m-r)(m-1)!} s(m, m-r)\right\} h(m-1)\\
&=h(-1)+\sum_{j=1}^{\infty} \left\{ \sum_{r=0}^{j+1} \frac{1}{(j+1-r)j!} s(j+1, j+1-r)\right\} h(j)\\
&=h(-1)+\sum_{j=1}^{\infty} 
\left\{ \sum_{m=-1}^{j} \frac{1}{(m+1)j!} s(j+1, m+1)\right\} h(j).
\end{align*}

We must compute
\[
    \sup_{j, m}\abs{\left\{\frac{1}{(m+1)j!} s(j+1, m+1)\right\} h(j) }.
\]
Now we've already seen that $\nu(s(j+1, m+1))\geq
\nu(j!)-\nu(\lfloor j/p^r\rfloor) -mr$ 
where $mp^r\leq j < mp^{r+1}$.\ Therefore we must consider
\[
 \sup_{j, m} p^{mr -\nu(m+1)+
 \nu(\lfloor j/p^r\rfloor) }\abs{h(j)}_R \leq \sup_{j, m} p^{mr + \nu(\lfloor j/p^r\rfloor) }\abs{h(j)}_R.
\]
If $ m\leq p^t$ then $\lfloor j/p^r\rfloor < mp\leq p^{t+1}$, so that $p^{\nu(\lfloor j/p^r\rfloor) }\leq p^{t}$

Arguing as before we will get the boundedness of $h[-1]$ just as long as $p^{mr+t}R^{-j}$ converges to $0$ as $j$ goes to infinity. But $R\geq p^{1/p}$, so that 
\[
 p^{mr+t}R^{-j}\leq p^{mr+t-j/p}  \leq
 p^{mr+t-mp^{r-1}}
\]
and the righthand side of this inequality goes to $0$ as $r$ goes to infinity. Now the required limit follows, and Theorem 
\ref{thmnegop} is proved. \end{proof}

\bibliography{refs}{}
\bibliographystyle{plain}
\end{document}